\documentclass[11pt, reqno]{amsart}
\usepackage{wrapfig, lipsum,booktabs}
\usepackage{graphicx}
\usepackage{amssymb}
\usepackage{epstopdf}
\usepackage{verbatim}
\usepackage{bm}
\usepackage{multicol}
\usepackage{multirow}
\usepackage{subfigure}
\usepackage{float}
\usepackage{color}
\usepackage{soul}
\usepackage{tikz}
\usepackage{tikz-3dplot}
\usepackage{pgfplots}
\newtheorem{theorem}{Theorem}[section]

\newtheorem{remark}{Remark}[section]

\usepackage{listings}

\usepackage{multirow}
\usepackage{amsmath,amssymb,eucal}
\usepackage{graphicx,subfigure,epsfig}
\usepackage{url}
\usepackage[top=1in, bottom=1.in, left=1in, right=1in]{geometry}

%
%
%
\newcommand{\bld}[1]{\hbox{\boldmath$#1$}}
\newcommand{\Th}{\mathcal{T}_h}

\newcommand{\Eh}{\mathcal{E}_h}

\setulcolor{red}


%

\begin{document}
\title[Hybridized discontinuous Galerkin]{A hybridizable discontinuous Galerkin method
on unfitted meshes for single-phase Darcy flow in fractured porous media}
\author{Guosheng Fu}
\address{Department of Applied and Computational Mathematics and
Statistics, University of Notre Dame, USA.}
\email{gfu@nd.edu}
\author{Yang Yang}
\address{Department of Mathematical Sciences, Michigan Technological University, USA.}
\email{yyang7@mtu.edu}
 \thanks{
 G. Fu was partially supported by the NSF grant DMS-2012031. Y. Yang was supported by the Simon Foundation 961585.
 }

 \keywords{Hybridizable discontinuous Galerkin method; fractured porous media;
 unfitted mesh; Dirac-$\delta$ function approach}
\subjclass{65N30, 65N12, 76S05, 76D07}
\begin{abstract}
  We present a novel hybridizable discontinuous Galerkin (HDG) method on unfitted meshes for single-phase Darcy flow in a fractured porous media.
  In particular we apply the HDG methodology to the recently introduced 
  reinterpreted discrete
fracture model (RDFM) \cite{Xusubmit} that use Dirac-$\delta$ functions to model both conductive and blocking fractures. 
Due to the use of Dirac-$\delta$ function approach for the fractures, our numerical scheme naturally allows for unfitted meshes with respect to the fractures, which is the major novelty of the proposed scheme.
Moreover, the scheme is locally mass conservative and is relatively easy to implement comparing with existing work on the subject. In particular, our scheme is a simple modification of an existing regular Darcy flow HDG solver by adding the following two components: (i) locate the co-dimension one fractures in the background mesh and adding the appropriate surface integrals associated with these fractures into the stiffness matrix, (ii) adjust the penalty parameters on cells cut through conductive and blocking fractures (fractured cells).

Despite the simplicity of the proposed scheme, it performs extremely well for various benchmark test cases in both two- and three-dimensions.
This is the first time that a truly unfitted finite element scheme been applied to complex fractured porous media flow problems
in 3D with both blocking and conductive fractures without any restrictions on the meshes.
\end{abstract}
\maketitle

\section{Introduction}
\label{sec:intro}

Many applications in contaminant transportation, petroleum engineering and radioactive waste deposit can be modeled by single- and multi-phase flows in porous media. A typical porous media may contain conductive fractures with tiny thickness but high permeability. Some fractures may be filled with minerals and debris, forming blocking fractures with low permeability. Mathematical modeling and numerical simulation for flows in fractured porous media are challenging due to the highly heterogeneity of the porous media.

Several effective mathematical models have been developed in the literature for simulating flows in porous media with conductive fractures, such as the dual porosity model \cite{DualPoro1,DualPoro2,geiger}, single porosity model \cite{SinglePoro1}, traditional discrete fracture model (DFM) \cite{FEMDFM1,FEMDFM2,SuperposeStiffnessMat,FEMDFM,DFMpaper1,FEMDFM3,FEMDFM4}, embedded DFM (EDFM) \cite{firstEDFM,secondEDFM,EDFM3,pEDFM,EDFM4,CrossShaped,EDFM5}, the interface models \cite{Alboin99,Interfaces2,Interfaces3,benchmark2} and extended finite element DFM (XDFM) based on the interface models \cite{XFEMDFM1,XFEMDFM2,ThesisXFEM,XFEMDFM3,XFEMDFM4}, finite element method based on Lagrange multipliers \cite{LMFEM2D,LMFEM2D2, LMFEM3D}, etc. Among the above mentioned works, the interface model \cite{Interfaces5,Interfaces6,Boon2018,EG20} and the projection-based EDFM (pEDFM) \cite{pEDFM,Jiang2017} can also be used for problems containing blocking fractures. The interface model is to explicitly represent the fracture as the interface of a porous media, and the governing equations in the porous media and fractures can be constructed. In the interface model, the matrix and fractures are considered as two different system and the mass transfer between them is given by the jump of the velocity. Therefore, the interface model requires fitted mesh, i.e. the fracture is located at the cell skeletons. Though hanging nodes are allowed, numerical methods based on fitted meshes may suffer from low quality meshes. To fix this gap, XDFM \cite{XFEMDFM1,XFEMDFM2,ThesisXFEM,XFEMDFM3,XFEMDFM4} was introduced. However, such treatment may significantly increase the degrees of freedom (DoFs), hence is not of practical use, especially for problems with high geometrical complexity \cite{FLEMISCH2018239}. Another possibility to extend the interface model to unfitted mesh is to use the CutFEM \cite{CutFEM}. However, this method may not work for media with complicated fractures, as the fractures have to separate the domain into completely disjoint subdomains. The pEDFM \cite{pEDFM,Jiang2017} is another way to simulate flow in porous media with blocking fractures. The basic idea is to reduce the effective flow area between the blocking fracture and the adjacent matrix cells based on the property of the blocking fracture. However, most of previous works in this direction work for rectangular meshes, and the extension to general triangular or arbitrary meshes seems to be complicated.

In \cite{Xu2020}, one of the authors introduced the reinterpreted discrete fracture model (RDFM) for single-phase flow in porous media with conductive fractures. Different from the interface model and pEDFM, the RDFM couples the fracture and matrix in one system and use one equation to model the flows in both matrix and fractures. The basic idea is to use Dirac-$\delta$ functions to represent the lower dimensional fractures in the system containing higher dimensional matrix. The effect of the Dirac-$\delta$ functions is to increase the permeability at the location of the conductive fractures. Later, the RDFM was successfully applied to simulate contaminant transportation in \cite{Feng2021}. As an extension, the RDFM was further developed to simulate flows in porous media with both conductive and blocking fractures in \cite{Xusubmit}. Similar to the RDFM for conductive fractures \cite{Xu2020}, the blocking fractures were also described as Dirac-$\delta$ functions, and they are used to increase the flow resistance. The RDFM incorporates the information of the fractures into the equation, hence it works for arbitrary meshes without any restrictions. In \cite{Xusubmit}, the local discontinuous Galerkin (LDG) methods were applied to RDFM. An extremely large penalty of order $\mathcal{O}(h^{-3})$ was added to the pressure on cell interfaces without blocking fractures, while a moderate large penalty of order $\mathcal{O}(h^{-2})$ was applied to the normal direction of the velocity on cell interfaces with blocking fractures. The penalty is used to intimate the continuity requirement of the target variables. Unfortunately, this LDG method 
leads to a fully coupled saddle-point linear system for the velocity and pressure, hence its practical application in three dimension is limited.
Moreover, the lowest order scheme therein use (discontinuous) piecewise linear functions as the piecewise constant version did not lead to a convergent algorithm. 
Furthermore, the effect of the penalty and well-posedness of the method was not clear. Besides the above works, the RDFM for blocking fractures was also combined with the interface model for conductive fractures in \cite{FuYang22} where fitted meshes for conductive fractures were required.

In this paper, we apply the hybridizable discontinuous Galerkin (HDG) methods
for RDFM-based single-phase flows in porous media. Similar to the idea given in \cite{Xusubmit}, the proposed method (1) produces locally conservative velocity approximations; (2) works for problems containing both conductive and blocking fractures; (3) can be applied to arbitrary meshes without any restrictions. In additional to the above, there are several advantages of the proposed method that were not enjoyed by the one given in \cite{Xusubmit}. First of all, the HDG method can be efficiently solved via static condensation, which leads to a symmetric positive definite (SPD) linear system 
for the pressure degrees of freedom (DOFs) on the mesh skeletons only. Hence they can be implemented very efficiently comparing with the LDG scheme \cite{Xusubmit}.
As a result, three-dimensional simulation for complex fracture networks are now possible. 
Secondly, the penalty parameters in the HDG scheme is only adjusted on cells contain fractures, with extra (pressure) stabilization on conductive fractures and reduced (pressure) stabilization on blocking fractures.
With a judicious choice of the penalty parameters, the lowest order HDG scheme with piecewise constant approximations can now  yield satisfactory numerical results. 
Finally, the well-posedness of the proposed method can be guaranteed theoretically, which was completely missing for the method given in \cite{Xusubmit}. As an application, it is straightforward to couple the proposed flow equation with the transport equations and construct the locally conservative numerical methods for the transport equations. However, that is not the main target of this paper, so we will discuss the applications in the future.
The combination of these properties for the HDG scheme on unfitted meshes makes it highly competitive comparing with existing works for fractured porous media that can simultaneously handle blocking and conductive fractures both in terms of algorithmic complexity and numerical accuracy.

The rest of the paper is organized as follows. In Section \ref{sec:model}, we present the RDFM and the HDG methods to be used. Numerical results for various benchmark test cases are presented in Section \ref{sec:numerics}. Some concluding remarks will be given in Section \ref{sec:conclude}.

\section{The HDG scheme}
\label{sec:model}
\subsection{The model}
We consider the following RDFM proposed in \cite{Xusubmit}:
\begin{subequations}
\label{rdfm}
\begin{align}
\label{rdfmX}
    \left(\bld I+\bld K_m \sum_{i=1}^M \frac{\epsilon_i}{k_{i}}\delta_{\Gamma_i}
    \bld n_i\bld n_i^T
    \right)\bld u = &\; 
    -\left(\bld K_m +\sum_{i=M+1}^{M+N} {\epsilon_i}{k_{i}}\delta_{\Gamma_i}
    (\bld I -\bld n_i\bld n_i^T)
    \right)\nabla p,\\
    \nabla \cdot \bld u = &\; f,
    \end{align}
    on a $d$-dimensional domain $\Omega$ with $d=2,3$. 
Here $\bld u$ is the total Darcy velocity, $p$ the pressure, $\bld K_m$ is the matrix permeability, 
$\bld I$ is the identity tensor, $f$ is the source term, and
$\Gamma_i$ is the location of the $i$-th $(d-1)$-dimensional fracture with thickness $\epsilon_i$, permeability 
$k_{i}$ and normal direction $\bld n_i$ for $1\le i\le M+N$, where we assume the first $M$ fractures are blocking while the last $N$ fractures are conductive, i.e., $k_i\ll \bld K_m$ for $i\le M$ and 
$k_i\gg \bld K_m$ for $i\ge M+1$. 
\end{subequations}
Moreover, $\delta_{\Gamma_i}$ is the Dirac-$\delta$ function such that $\delta_{\Gamma_i}(\bld x) = \infty$ if $\bld x\in \Gamma_i$, 
$\delta_{\Gamma_i}(\bld x) = 0$ if $\bld x\not \in \Gamma_i$ and $\int_{\Omega}\delta_{\Gamma_i}\mathrm{dx} = 1$.
For simplicity, we assume the model \eqref{rdfm} is equiped with the homogeneous Dirichlet boundary condition $p=0$ on $\partial \Omega$. Other boundary conditions will be used in 
the numerical experiments.

\begin{remark}[On RDFM]
In \eqref{rdfm}, we apply Dirac-$\delta$ functions to bridge the difference of the dimensions between the matrix and fractures as the Dirac-$\delta$ functions concentrate all the information at its concentration. The first $M$ Dirac-$\delta$ functions are used to increase the flow resistance and are for blocking fractures. Due to the small thickness of the blocking fractures, the effect of the blocking fractures in the tangential direction is negligible. Therefore, we include the tensor associate with the normal direction in the model. Similarly, the last $N$ Dirac-$\delta$ functions are used to increase the permeability and are for conductive fractures. Moreover, due to the small thickness of the blocking fractures, the effect of the conductive fractures in the normal direction is negligible, and we include the tensor associate with the tangential directions in the model. We refer to \cite{Xu2020} and \cite{Xusubmit} for more discussion on the RDFM.
\end{remark}

The Dirac-$\delta$ function approach in the above model avoids a direct modeling of the fractures using lower dimensional Darcy flows as typical in the interface models \cite{Interfaces5,Interfaces6,Boon2018,EG20}. Hence, unfitted mesh discretizations can be naturally applied. In the original work \cite{Xusubmit}, an LDG scheme on unfitted meshes was devised for \eqref{rdfm} in two dimensions with satisfactory numerical results.  However, well-posedness of the LDG scheme was not established as no energy identity exists, and its computational cost is relatively large comparing with existing works on (partially) fitted meshes.

Here we will devise a well-posed HDG scheme on unfitted meshes for the above model \eqref{rdfm}, which is not only computationally cheaper than the LDG scheme \cite{Xusubmit} but also more accurate and has an energy identity.
To this end, we introduce the Darcy velocity in the matrix 
$\widetilde{\bld u}:=-\bld K_m\nabla p$ as a new unknown and rewrite the model \eqref{rdfm} into the following 
three-field formulation:
\begin{subequations}
\label{rdfm-1} 
\begin{align}
\label{rdfmX1}
 \left(\bld K_m^{-1} +\sum_{i=1}^M \frac{\epsilon_i}{k_i}\delta_{\Gamma_i}
    \bld n_i\bld n_i^T
    \right)\bld u = &\; 
    \left(\bld I+\sum_{i=M+1}^{M+N} \bld K_m^{-1}{\epsilon_i}k_i\delta_{\Gamma_i}
    (\bld I -\bld n_i\bld n_i^T)
    \right)\bld K_m^{-1}\widetilde{\bld u},\\
\bld K_m^{-1}    \widetilde{\bld u}+\nabla p = &\;0,\\
    \nabla \cdot \bld u = &\; f,
    \end{align}
\end{subequations}
Note that we multiplied equation \eqref{rdfmX} with $\bld K_m^{-1}$
on the left to obtain the equation \eqref{rdfmX1}.
We emphasis that here $\bld u$ is the total Darcy velocity that contains information about the fractures, whilst   $\widetilde{\bld u}$
is the Darcy velocity on the matrix without fracture contributions.


\newcommand{\Vh}{\bld V_h}
\newcommand{\Wh}{W_h}
\newcommand{\Mh}{M_h}

\subsection{The HDG scheme}
Let $\Th:=\{K\}$ be a triangulation
 of the domain $\Omega$ that is unfitted to the location of the fractures. Let $\Eh$ be the collections of $(d-1)$-dimensional facets
 (edges for $d=2$, faces for $d=3$)
 of $\Th$.
 We use level set functions to represent the fractures $\Gamma_i$. 
 In particular, 
 \begin{itemize}
     \item 
      if $\Gamma_i$ is a closed curve/surface without boundaries, it is simply approximated by the zero level set of a continuous piecewise linear function 
 $\phi_i\in W_h^1\cap H^1(\Omega)$ 
 on the mesh
 $\Th$:
     \[
     \Gamma_{i,h}:= \{x\in \Omega: \phi_i(x)=0\}.
     \]
     \item 
      if $\Gamma_i$ is a closed curve/surface with $(d-2)$-dimensional boundary 
      $\partial \Gamma_i$, it is approximated by 
      a main level set function $\phi_i\in W_h^1\cap H^1(\Omega)$ for the (extended) surface $\Gamma_i$ and additional level set functions $\psi_i^j\in W_h^1\cap H^1(\Omega)$ for $j=1, \cdots L$ to take care of the boundary $\partial\Gamma_i$:
     \[
     \Gamma_{i,h}:= \{x\in \Omega: \phi_i(x)=0\}
     \cap_{j=1}^L \{x\in \Omega: \psi_i^j(x)< 0\}.
     \]
     In practice, usually two level set functions are sufficient to provide a good approximation of
     $\Gamma_i$, i.e., 
          \[
     \Gamma_{i,h}:= \{x\in \Omega: \phi_i(x)=0\}
     \cap \{x\in \Omega: \psi_i^1(x)< 0\}.
     \]
 \end{itemize}
 Hence the discrete fractures $\Gamma_{i,h}$ on each element 
 $K\in\Th$ is always a line segment in 2D or a polygon in 3D due to the use of piecewise linear functions as level sets. 
With $\Gamma_{i,h}$ ready, we split the cells in $\Th$
into disjoint three groups
\[
\Th:= \Th^r\cup \Th^b\cup \Th^c,
\]
where $\Th^r$ contains regular cells without fractures,
$\Th^b$ contains cells with blocking fractures, and 
 $\Th^c$ contains cells with conductive fractures defined as follows:
 \begin{alignat*}{2}
     \Th^b:=&\;\{K\in\Th: &&\;
     \exists i\in [1, M] \text{ such that  }
     K\cap \Gamma_{i,h} \not= \emptyset
     \},\\
     \Th^c:=&\;\{K\in\Th\backslash \Th^b: &&\;
     \exists i\in [M+1, M+N] \text{ such that  }
     K\cap \Gamma_{i,h} \not= \emptyset
     \},\\
     \Th^r:=&\;\Th\backslash \{\Th^b\cup \Th^c\}.
 \end{alignat*}
Note that when both blocking and conductive fractures appear
in a cell, we always treat it as a blocking cell in $\Th^b$ and we will ignore the conductive fractures within that cell in the discretization for stability considerations.
Note also that we allow fractures to be intersecting with each other in an arbitrary fashion within a single cell as long as a discrete characterization of the fracture $\Gamma_{i,h}$ using (multi-)level sets is possible. 

 Given a polynomial degree $k\ge 0$, we 
 consider the following finite element spaces:
 \begin{subequations}
  \label{space}
 \begin{align}
   \label{space-u}
   \Vh^k :=&\; \{\bld v\in [L^2(\Th)]^d:\;
   \bld v|_K\in [P_k(K)]^d,\quad \forall K\in\Th\},\\
   \Wh^k :=&\; \{w\in L^2(\Th):\;
   w|_K\in P_k(K),\quad \forall K\in\Th\},\\
   \Mh^k :=&\; \{\mu\in L^2(\Eh):\;
   \mu|_F\in P_k(F),\quad \forall F\in\Eh, \quad 
   \mu|_F = 0 \text{ on }\partial\Omega\},
 \end{align}
\end{subequations}
where $P_k(S)$ is the polynomial space of degree $k$ on $S$.
 We further denote the following inner products to simplify notation:
\begin{alignat*}{2}
  (\phi, \psi)_{\Th}: =&\;\sum_{K\in \Th}\int_{K}\phi\,\psi\,\mathrm{dx},&&\quad
  \quad
  \langle\phi, \psi\rangle_{\partial\Th}: =\;\sum_{K\in
\Th}\int_{\partial K}\phi\,\psi\,\mathrm{ds}.
\end{alignat*}

The HDG scheme for \eqref{rdfm-1} is now given as follows:
Find $(\bld u_h, \widetilde{\bld u}_h, p_h, \widehat p_h)\in \Vh^k\times \Vh^k\times \Wh^k\times \Mh^k$ such that
\begin{subequations}
 \label{hdg}
 \begin{align}
 \label{hdg-1}
  (\bld K_m^{-1}\bld u_h,\widetilde{\bld  v}_h)_{\Th}
  +\Phi_{b}(\bld u_h, \widetilde{\bld  v}_h)
= \; 
  (\bld K_m^{-1}\widetilde{\bld u}_h,\widetilde{\bld  v}_h)_{\Th}
  +\Phi_{c}(\widetilde{\bld u}_h, \widetilde{\bld  v}_h),\\
 \label{hdg-2}
(\bld K_m^{-1}\widetilde{\bld  u}_h, \bld  v_h)_{\Th}
-(p_h, \nabla\cdot\bld  v_h)_{\Th}
+\langle\widehat p_h, \bld  v_h\cdot\bld n\rangle_{\partial\Th}=&\;0,\\
 \label{hdg-3}
-(\bld  u_h, \nabla q_h)_{\Th}
+\langle\widehat{\bld u}_h\cdot\bld n, q_h\rangle_{\partial\Th}=&\;(f, q_h)_{\Th}
,\\
 \label{hdg-4}
\langle\widehat{\bld u}_h\cdot\bld n, \widehat q_h\rangle_{\partial\Th}=&\;0,
 \end{align}
for all 
$(\bld v_h, \widetilde{\bld v}_h, q_h, \widehat q_h)\in \Vh^k\times \Vh^k\times \Wh^k\times \Mh^k$, where 
$\Phi_b$/$\Phi_c$ contain the following blocking/conductive fracture surface integrals (taking into account the property of the Dirac-$\delta$ functions):
\begin{align}
 \Phi_b(\bld u, \bld v):=&\;
 \sum_{K\in\Th^b}\sum_{i=1}^M\int_{K\cap \Gamma_{i,h}}
 \frac{\epsilon_i}{k_i}(\bld u\cdot\bld n_i)(\bld v\cdot\bld n_i)\,\mathrm{ds},\\
 \Phi_c(\bld u, \bld v):=&\;
 \sum_{K\in\Th^c}\sum_{i=M+1}^{M+N}\int_{K\cap \Gamma_{i,h}}\epsilon_ik_i
   (\bld K_m^{-1}\bld u)_{t,i}
   \cdot 
      (\bld K_m^{-1}\bld v)_{t, i}
 \,\mathrm{ds},
\end{align}
where $(\bld w)_{t,i}:=\bld w- (\bld w\cdot\bld n_i)\bld n_i$
denotes the tangential component of a vector $\bld w$ on $\Gamma_{i,h}$, 
and the numerical flux 
$\widehat{\bld u}_h\cdot\bld n$ takes the following form:
\begin{align}
    \label{flux}
    \widehat{\bld u}_h\cdot\bld n:=\bld u_h\cdot\bld n
    +\alpha_h (p_h-\widehat p_h),
\end{align}
with $\alpha_h>0$ being the stabilization function defined element-wise as follows:
\begin{align}
\label{stab}
    \alpha_h|_K=\left\{
    \begin{tabular}{ll}
    $\bld K_m$ & if $K\in \Th^r$,\\[.5ex]
    $C_b(h_K/L)^{s_b}\bld K_m$ & if $K\in \Th^r$,\\[.5ex]
    $C_c\,(h_K/L)^{-s_c}\bld K_m$ & if $K\in \Th^c$,
    \end{tabular}
    \right.
\end{align}
where $h_K$ is the local mesh size, 
$L$ is the characteristic length of the domain $\Omega$, and $C_b,C_c>0$ and $s_b,s_c>0$ are penalty parameters to be tuned.
We note that proper tuning of these penalty parameters 
is important for the accuracy of the scheme \eqref{hdg}; see 
Remark~\ref{rk-stab} below.
\end{subequations}

\begin{remark}[Connection with LDG-H for regular porous media flow]
In the absence of fractures ($\Phi_b=\Phi_c=0$), we have 
$\widetilde{\bld u}_h=\bld u_h$ for the scheme \eqref{hdg}.
Hence, the scheme \eqref{hdg} reduces to the so-called LDG-H scheme introduced in \cite{CGL09} and analyzed in \cite{CGS10}. In particular, the LDG-H scheme produces an optimal $L^2$-convergence rate of order $h^{k+1}$ for the velocity approximation, and a superconvergent $L^2$-convergence rate of order $h^{k+2}$ (under the usual full $H^2$-elliptic regularity assumption) for a special projection error of the pressure, from which a superconvergent postprocessed pressure approximation $p_h^*\in W_h^{k+1}$ can be constructed that satisfies
\begin{subequations}
\label{postprocess}
\begin{alignat}{2}
    (\nabla p_h^*, \nabla q_h)_{K} = &\; -(\bld K_m^{-1}\widetilde{\bld u}_h, \nabla q_h), \quad 
    \forall q_h\in W_h^{k+1}, \quad &&\forall K\in \Th,\\
    (p_h^*, 1)_{K} = &\; (p_h, 1), \quad &&\forall K\in \Th.
\end{alignat}
\end{subequations}

Scheme \eqref{hdg} is a simple modification of the classical LDG-H scheme by adding the fracture surface integrals $\Phi_b$ and $\Phi_c$
and adjusting the stabilization parameter $\alpha_h$ on conductive and blocking fractured cell. 
Hence, minimal amount of work is needed to convert a regular porous media flow HDG solver to a fractured porous media flow solver on unfitted meshes. This has to be contrasted with other fractured porous media flow models that model lower dimensional fractured flows where significant code re-design is needed (and are mostly restricted to geometrically fitted meshes); see, e.g. \cite{Alboin99,Boon2018, FuYang22}.
Despite the simplicity of the scheme \eqref{hdg}, its performance for various 2D and 3D benchmark tests reveal that it is also highly accurate. 
\end{remark}

We have the following well-posedness of the HDG scheme \eqref{hdg}.
\begin{theorem}[Well-posedness]
The solution to the HDG scheme \eqref{hdg} exists and is unique.
\end{theorem}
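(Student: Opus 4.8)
Since \eqref{hdg} is a square linear system on a finite-dimensional space, existence and uniqueness are equivalent, so the plan is to show that the homogeneous problem ($f=0$, vanishing boundary data) has only the trivial solution. I would run the classical LDG-H energy argument, adapted to the extra unknown $\widetilde{\bld u}_h$ and the fracture forms $\Phi_b,\Phi_c$.

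\emph{Step 1 (energy identity).} With $f=0$, take $\bld v_h=\bld u_h$ in \eqref{hdg-2}, $q_h=p_h$ in \eqref{hdg-3}, $\widehat q_h=\widehat p_h$ in \eqref{hdg-4}; integrate by parts elementwise in $(p_h,\nabla\cdot\bld u_h)_{\Th}$ and $(\bld u_h,\nabla p_h)_{\Th}$, add the three equations, and insert the flux formula \eqref{flux}. The pressure volume terms cancel and the skeleton terms collapse, leaving
\[
(\bld K_m^{-1}\widetilde{\bld u}_h,\bld u_h)_{\Th}
+\langle\alpha_h(p_h-\widehat p_h),\,p_h-\widehat p_h\rangle_{\partial\Th}=0 .
\]
Since $\alpha_h>0$ in all three branches of \eqref{stab}, the boundary term is nonnegative, so the theorem reduces to proving $(\bld K_m^{-1}\widetilde{\bld u}_h,\bld u_h)_{\Th}\ge 0$, with equality forcing $\widetilde{\bld u}_h=\bld 0$.

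\emph{Step 2 (the key sign, and the main obstacle).} This positivity is the crux and is not automatic: globally \eqref{hdg-1} couples $\bld u_h$ to $\widetilde{\bld u}_h$ through a nonsymmetric operator of the form $A_b^{-1}A_c$ with $A_b,A_c$ SPD, whose quadratic form in the $\bld K_m^{-1}$ inner product need not be sign-definite. What rescues it is that $\Vh^k$ is fully discontinuous, so \eqref{hdg-1} decouples element by element; since $\Th=\Th^r\cup\Th^b\cup\Th^c$ is a disjoint union, at most one of $\Phi_b$, $\Phi_c$ is active on a given $K$. Let $B_K$, $A_{b,K}:=B_K+P_K$, $A_{c,K}:=B_K+Q_K$ be the local operators on $[P_k(K)]^d$ attached respectively to $(\bld K_m^{-1}\cdot,\cdot)_K$, to $\Phi_b$ restricted to $K$, and to $\Phi_c$ restricted to $K$ (so $B_K$ is SPD and $P_K,Q_K$ are symmetric positive semidefinite). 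Then \eqref{hdg-1} reads $\bld u_h|_K=\widetilde{\bld u}_h|_K$ for $K\in\Th^r$, $A_{b,K}\bld u_h|_K=B_K\widetilde{\bld u}_h|_K$ for $K\in\Th^b$, and $B_K\bld u_h|_K=A_{c,K}\widetilde{\bld u}_h|_K$ for $K\in\Th^c$. Substituting these into the local contribution $(\bld K_m^{-1}\widetilde{\bld u}_h,\bld u_h)_K$ yields, respectively, $(B_K\widetilde{\bld u}_h|_K,\widetilde{\bld u}_h|_K)_K$, then $(A_{b,K}^{-1}z_K,z_K)_K$ with $z_K:=B_K\widetilde{\bld u}_h|_K$, and $(A_{c,K}\widetilde{\bld u}_h|_K,\widetilde{\bld u}_h|_K)_K$; since $B_K$, $A_{b,K}^{-1}$ and $A_{c,K}$ are all SPD, each term is $\ge 0$ and vanishes only if $\widetilde{\bld u}_h|_K=\bld 0$. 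Summing over $K\in\Th$ proves the claim; I expect this elementwise decoupling to be the only genuinely non-routine ingredient.

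\emph{Step 3 (conclusion).} The energy identity now forces $\widetilde{\bld u}_h=\bld 0$ and $p_h=\widehat p_h$ on $\partial\Th$; feeding $\widetilde{\bld u}_h=\bld 0$ back into \eqref{hdg-1} gives $\bld u_h=\bld 0$. With $\bld u_h=\widetilde{\bld u}_h=\bld 0$ and $p_h=\widehat p_h$ on the skeleton, \eqref{hdg-2} reduces after elementwise integration by parts to $(\nabla p_h,\bld v_h)_{\Th}=0$ for all $\bld v_h\in\Vh^k$; choosing $\bld v_h=\nabla p_h$ (for $k\ge1$; for $k=0$ this is automatic) shows $p_h$ is elementwise constant, and the trace condition $p_h=\widehat p_h$ on $\partial\Th$ together with $\widehat p_h|_{\partial\Omega}=0$ and the connectedness of $\Omega$ then forces $p_h\equiv 0$, hence $\widehat p_h\equiv 0$. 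This last part is the standard LDG-H uniqueness step, and existence follows from uniqueness for the square system.
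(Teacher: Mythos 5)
Your proposal is correct and follows essentially the same route as the paper's proof: the identical energy identity from \eqref{hdg-2}--\eqref{hdg-4}, then the elementwise use of \eqref{hdg-1} (exploiting the discontinuity of the velocity space and the disjointness of $\Th^r$, $\Th^b$, $\Th^c$) to show that $(\bld K_m^{-1}\widetilde{\bld u}_h,\bld u_h)_{\Th}$ is nonnegative and vanishes only when the discrete velocities vanish, followed by the standard conclusion $\nabla p_h=0$, $p_h=\widehat p_h$, hence $p_h=\widehat p_h=0$ and existence by squareness of the system. The only difference is cosmetic: on blocking cells you write the local cross term as $(A_{b,K}^{-1}B_K\widetilde{\bld u}_h,\,B_K\widetilde{\bld u}_h)$, whereas the paper takes the test function $\widetilde{\bld v}_h=\bld u_h$ there and writes the same quantity as $(\bld K_m^{-1}\bld u_h,\bld u_h)_K+\Phi_{b,K}(\bld u_h,\bld u_h)$.
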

\begin{proof}
Taking test functions 
$(\bld v_h, q_h, \widehat q_h):=
(\bld u_h, p_h, \widehat p_h)$
in \eqref{hdg-2}--\eqref{hdg-4} and adding, we obtain the following identity:
\begin{align}
    \label{ener}
      (\bld K_m^{-1}\widetilde{\bld u}_h,{\bld  u}_h)_{\Th}
      + \langle\alpha_h(p_h-\widehat p_h), 
      (p_h-\widehat p_h)\rangle_{\partial \Th}= (f, \bld u_h)_{\Th}.
\end{align} 
Taking $\widetilde{\bld v}_h$ to be supported on 
a cell $K\in\Th$ and using the definition of $\Phi_b$
and $\Phi_c$, we get 
\begin{alignat*}{2}
    (\bld K_m^{-1}{\bld u}_h,\widetilde{\bld  v}_h)_{K}=&\,(\bld K_m^{-1}\widetilde{\bld u}_h,\widetilde{\bld  v}_h)_{K}
    ,&&\quad \forall K\in\Th^r,\\
    (\bld K_m^{-1}{\bld u}_h,\widetilde{\bld  v}_h)_{K}
    +\Phi_{b,K}({\bld u}_h,\widetilde{\bld  v}_h)
    =&\,(\bld K_m^{-1}\widetilde{\bld u}_h,\widetilde{\bld  v}_h)_{K},&&\quad \forall K\in\Th^b,\\
        (\bld K_m^{-1}{\bld u}_h,\widetilde{\bld  v}_h)_{K}
    =&\,(\bld K_m^{-1}\widetilde{\bld u}_h,\widetilde{\bld  v}_h)_{K}
    +\Phi_{c,K}(\widetilde{\bld u}_h,\widetilde{\bld  v}_h)
    ,&&\quad \forall K\in\Th^c,
\end{alignat*}
where $\Phi_{b,K}$ and $\Phi_{c,K}$ are restrictions of 
$\Phi_b$ and $\Phi_c$ on the respective cell $K$.
Taking $\widetilde{\bld v}_h=\bld u_h$ on $K\in \Th^r\cup \Th^b$, and taking 
$\widetilde{\bld v}_h=\widetilde{\bld u}_h$ on $K\in \Th^c$, we get 
\begin{alignat*}{2}
    (\bld K_m^{-1}{\bld u}_h,\widetilde{\bld u}_h)_K
    =\left\{ 
    \begin{tabular}{ll}
    $(\bld K_m^{-1}{\bld u}_h,{\bld u}_h)_K$&  if $K\in \Th^r$,
    \\[.5ex]
$(\bld K_m^{-1}{\bld u}_h,{\bld u}_h)_K
+\Phi_{b,K}({\bld u}_h,{\bld u}_h)$&  if $K\in \Th^b$,\\[.5ex]
$(\bld K_m^{-1}\widetilde{\bld u}_h,\widetilde{\bld u}_h)_K
+\Phi_{c,K}(\widetilde{\bld u}_h,\widetilde{\bld u}_h)$&  if $K\in \Th^c$.
    \end{tabular}\right.
    \end{alignat*}
Combining the above equalities with identity \eqref{ener}, we yield
\begin{align*}
E(\bld u_h, \widetilde{\bld u}_h)
      + \langle\alpha_h(p_h-\widehat p_h), 
      (p_h-\widehat p_h)\rangle_{\partial \Th}&\;= (f, \bld u_h)_{\Th},
\end{align*}
where 
\[
E(\bld u_h, \widetilde{\bld u}_h):= 
    (\bld K_m^{-1}{\bld u}_h,{\bld u}_h)_{\Th^r\cup \Th^b}
+(\bld K_m^{-1}\widetilde{\bld u}_h,\widetilde{\bld u}_h)_{\Th^c}
+\Phi_{b}({\bld u}_h,{\bld u}_h)
+\Phi_{c}(\widetilde{\bld u}_h,\widetilde{\bld u}_h)
\]
is non-negative.

Now let us establish uniqueness of the solution using the above energy identity. 
Taking $f=0$, we have 
\[
E(\bld u_h, \widetilde{\bld u}_h) = 0, \text{ and }
\langle\alpha_h(p_h-\widehat p_h), 
      (p_h-\widehat p_h)\rangle_{\partial \Th}=0,
\]
which implies that
$p_h=\widehat{p}_h$ on $\partial\Th$, 
$\bld u_h=0$ on $\Th^r\cup \Th^b$, 
and 
$\widetilde{\bld u}_h=0$ on $\Th^c$. 
Equation \eqref{hdg-1} then implies that 
$\widetilde{\bld u}_h = \bld u_h=0$ on all cells.
Equation \eqref{hdg-2} and $p_h=\widehat{p}_h$ 
then implies that $\nabla p_h=0$ on all cells. Hence $p_h$
is a global constant. Using the homogeneous Dirichlet boundary condition on $\widehat{p}_h$, we conclude that 
$p_h=0$ and $\widehat{p}_h=0$. Hence we proved the uniqueness.
Existence of the solution is a direct consequence of uniqueness as the system \eqref{hdg} is a square linear system.
\end{proof}

\begin{remark}[On the stabilization function]
\label{rk-stab}
In practice (see the numerics section below), we found taking $s_c\ge 3$ in \eqref{stab} leads to a convergent scheme for polynomial degree $k\ge1$, while taking $s_c<3$ may produce large consistency errors on conductive fractures. 
This indicates the stabilization needs to be very large on  conductive fractures.  
The reason for such large choice of stabilization is to implicitly enforce the pressure continuity across the boundary of conductive fractured cells as the physical model has such continuity while the surface integral 
$\Phi_c$ on conductive fractures itself does not enforce such pressure continuity. 
Equally well, the choice of smaller stabilization (with $s_b>0$) in \eqref{stab}
on blocking fractured cells has the effect of enforcing the velocity normal continuity across 
the boundary of blocking fractured cells to be consistent with the physical model. 
Here we found that when $\epsilon_i/k_i\approx \bld K_m^{-1}$ for blocking fractures, we can simply use the
same stabilization on blocking cells as those on the regular cells, i.e., with $C_b=1$ and $s_b=0$; see Examples 1 \& 2 in Section~\ref{sec:numerics}.
On the other hand, when 
$\epsilon_i/k_i\gg \bld K_m^{-1}$, we need to reduce the blocking cell stabilization to enforce the normal velocity continuity, where 
taking $C_b=1$ and $s_b=2$ usually gives a good result; see Examples 3 \& 6 in Section~\ref{sec:numerics}.

Here we provide another heuristic argument to justify the large stabilization on conductive fractures. We assume permeability $\bld K_m$ is a constant on each cell $K$ in the following discussion.
Taking $\alpha_h|_K\rightarrow \infty$ on conductive fractures, we have  $p_h\approx \widehat{p}_h$
on $\partial K$ for all $K\in \Th^c$ under the reasonable assumption that the numerical flux 
$\widehat{\bld u}_h\cdot\bld n$ stays bounded.
Then equation \eqref{hdg-2} implies that 
$\widetilde{\bld u}_h\approx -\bld K_m\nabla\bld p_h$.
Taking $\widetilde{\bld v}_h:=\bld K_m\nabla\bld q_h$ on 
$\Th^c$ in \eqref{hdg-1}
with $q_h\in W_h$ that is continuous across interior facets of $\Th^c$, we have 
\begin{align*}
    -(\bld u_h, \nabla q_h)_{\Th^c}
= &\;
    -(\widetilde{\bld u}_h, \nabla q_h)_{\Th^c}
    -
        \Phi_c(\widetilde{\bld u}_h, \bld K_m\nabla q_h)\\
        \approx &\;
            (\bld K_m\nabla p_h, \nabla q_h)_{\Th^c}
    +
        \Phi_c(\bld K_m\nabla p_h, \bld K_m\nabla q_h),
\end{align*}
Combine the above relation with \eqref{hdg-3} and using the fact that $q_h$ is continuous across interior facets of $\Th^c$, we get 
\begin{align*}
      (\bld K_m\nabla p_h, \nabla q_h)_{\Th^c}
    +
        \Phi_c(\bld K_m\nabla p_h, \bld K_m\nabla q_h)
        \approx 
        (f, q_h)_{\Th^c}-
        \langle\widehat{\bld u}_h\cdot\bld n, q_h
        \rangle_{\Gamma_h^c},
\end{align*}
where $\Gamma_h^c$ is the boundary facets of $\Th^c$.
The above relation show that $p_h$ will be a good approximation 
to the $H^1$-conforming finite element discretization 
of the RDFM model on conductive fractures \cite{Yang2021}, which was known to provide a consist approximation with respect to the conductive fractures as long as $W_h$ contains at least piecewise linear functions. 

The lowest-order case with $k=0$ requires further attention. It is more subtle to find a good set of penalty parameters on conductive fractured cells for $k=0$. If it is taking to be too large, the strong penalty will effectively makes pressure along fractures be a global constant, leading to large consistency errors. On the other hand, if stabilization is taking to be too small, the effects of conductive fractures will not be seen by the scheme. Our numerical experiments below suggests that taking $s=2$ for $k=0$ may lead to reasonable approximations.

We will investigate more on the effects of the stabilization function on the HDG scheme in our future work.
\end{remark}

\begin{remark}[Hybrid-mixed methods]
\label{rk:hm}
We can increase the velocity space $\Vh^k$
to be a discontinuous Raviart-Thomas space of degree $k$:
\[
 \Vh^{RT,k} :=\; \{\bld v\in [L^2(\Th)]^d:\;
   \bld v|_K\in [P_k(K)]^d\oplus 
   \bld x\widetilde{P}_k(K),\quad \forall K\in\Th\},
 \]
 where $\widetilde{P}_k(K)$ is the space of homogeneous polynomials of degree $k$.
Then the velocity-pressure pair $\Vh^{RT,k}$
and $\Wh^k$ satisfy the inf-sup condition, and we can set the stabilization $\alpha_h$
on $\Th^r\cup \Th^b$ to be zero when using 
$\Vh^{RT,k}$, $\Wh^k$, and $\Mh^k$ in the scheme \eqref{hdg} (while still keep the large stabilization on conductive fractures). 
The resulting scheme is the hybrid-mixed method, whose computational cost is similar to the HDG scheme \eqref{hdg}. 
We note that the lowest-order hybrid-mixed method for porous media containing pure blocking fractures was already introduced in our earlier work \cite{FuYang22}. 
\end{remark}

\begin{remark}[Variable polynomial degree on different cells]
\label{rk-variable}
The large stabilization on conductive fractures may lead to less accurate velocity approximations therein since 
$\bld u_h\approx -\bld K_m^{-1}\nabla p_h$.
An interesting variant of the scheme is to use one degree higher on conductive fractured cells $\Th^c$
than those on regular and  blocking fractured cells, that is, replacing the spaces $\Vh^k, \Wh^k$ and $\Mh^k$ in \eqref{hdg} by the following reduced version:
\begin{subequations}
\label{space-var}
\begin{align}
\Vh^{k-1,k}:=&\; [\Wh^{k-1,k}]^d,\\
   \Wh^{k-1,k} :=&\; 
   \{w\in \Wh^{k}:\;
   w|_K\in P_{k-1}(K),\quad \forall K\in\Th^r\cup \Th^b\},
   \\
   \Mh^{k-1,k} :=&\; \{\mu\in \Mh^{k}:\;
   \mu|_F\in P_{k-1}(F),\quad \forall F\in\Eh \text{ with }F\cap 
   \partial\Th^c = \emptyset\}.
\end{align}
This reduced version is less accurate than the original version but is cheaper to solve as it has less DOFs.
\end{subequations}
\end{remark}
In this work, we present numerical results only for the original HDG scheme \eqref{hdg} with polynomial degree $k=0,1,2$.
We will explore the performance of the above mentioned hybrid-mixed method and variable-degree variants in our future work. 

\begin{remark}[Static condensation and efficient implementation]
\label{rk-condense}
Just like the LDG-H scheme for regular porous media flow, we can solve system \eqref{hdg} efficiently using static condensation where one first locally eliminates 
the cell-wise DOFs to express the unknowns $\widetilde{\bld u}_h, \bld u_h, p_h$ as (local) functions of the global unknown $\widehat{p}_h$ and source term $f$ using \eqref{hdg-1}--\eqref{hdg-3}, and then 
solve the global transmission problem \eqref{hdg-4}
for $\widehat{p}_h$, which is a sparse and symmetric positive definite linear system whose efficient solution procedure can be designed following similar work for regular porous media flows; see, e.g., \cite{CDG14, Fu21a}. 
\end{remark}

\begin{remark}[Local mesh refinement near fractures]
\label{rk:refine}
Since the computational mesh $\Th$ is assumed to be completely independent of the fractures, the approximation quality of the scheme \eqref{hdg} on an initial coarse mesh that does not know the fracture locations may be poor. Here we propose to use local mesh refinement 
that only refine cells intersected by the fractures. In particular, given an initial mesh $\Th:= \Th^r\cup \Th^b\cup \Th^c$, we only mark cells in $\Th^b$ and $\Th^c$ for refinement
using the bisection algorithm. Multiple refinements can be performed sequentially as needed. 
This refinement procedure puts more cells around fractures and leads to significantly more efficient algorithms comparing with a na\"ive uniform refinement procedure.  
\end{remark}

\section{Numerics}
\label{sec:numerics}
In this section, we present detailed numerical results 
in two- and three-dimensions for the proposed HDG scheme \eqref{hdg}. When evaluating pressure distribution along line segments, we always evaluate the postprocessed pressure approximation  \eqref{postprocess}.
Our numerical simulations are performed using the open-source finite-element software
{\sf NGSolve} \cite{Schoberl16}, \url{https://ngsolve.org/}.
In particular, the (multi-)level set representation of  the fractures and their associated surface integrations are realized using the {\sf ngsxfem} add-on \cite{JOSS_LHPvW_2021}.
Sample code can be can be found in the git repository
\url{https://github.com/gridfunction/fracturedPorousMedia}.

\subsection*{Example 1: Cross-shaped fractures in 2D}
In this example, we test the performance of scheme \eqref{hdg} for a fractured media with simple cross-shaped fractures. Similar test was used in \cite{pEDFM}.
The computational domain is a unit square $\Omega=[0,1]\times [0, 1]$. 
Two fractures with thickness $\epsilon=10^{-3}$ and length 
$0.5$ are located in teh region given below and cross each other at the center $(0.5,0.5)$: 
\[
\Gamma_1 = \{(x,0.5): 0.25\le x\le 0.75\}, \quad
\Gamma_2 = \{(0.5,y): 0.25\le y\le 0.75\}.
\]
The matrix permeability is $\bld K_m=1$ and the fracture permeability is either (a) $k_1=k_2=10^3$ for the conductive case or (b) $k_1=k_2=10^{-3}$ for the blocking case.
Source term is $f=0$, and
the problem is closed with no flow boundary condition on the top and bottom boundaries and Dirichlet boundary condition 
$p=1$ on the left boundary and $p=0$ on the right boundary. See Figure~\ref{fig:ex1} for an illustration of the setup and
the reference solutions for the two cases. 
Here the reference solutions are obtained using a continuous $Q_1$ finite element scheme on a uniform  $2000\times 2000$ rectangular mesh where the fracture has been fully resolved.
\begin{figure}[ht]
\centering
\scalebox{0.75}{
  \begin{tikzpicture}
    \draw[ultra thick,draw=black]
    (0, 0)
    to (4, 0)
    to (4, 4)
    to (0, 4)
    to (0, 0)
    ;
  \draw[ultra thick, draw=red]
    (1,2) to (3,2);
  \draw[ultra thick, draw=red]
    (2,1) to (2,3);
\node at (2,4)[above,scale=1] {$\bld u\cdot\bld n=0$};
  \node at (2,0)[below,scale=1] {$\bld u\cdot\bld n=0$};
  \node at (0,2)[left,scale=1] {$p=1$};
  \node at (4,2)[right,scale=1] {$p=0$};
\end{tikzpicture} }    
\includegraphics[width=0.31\textwidth]{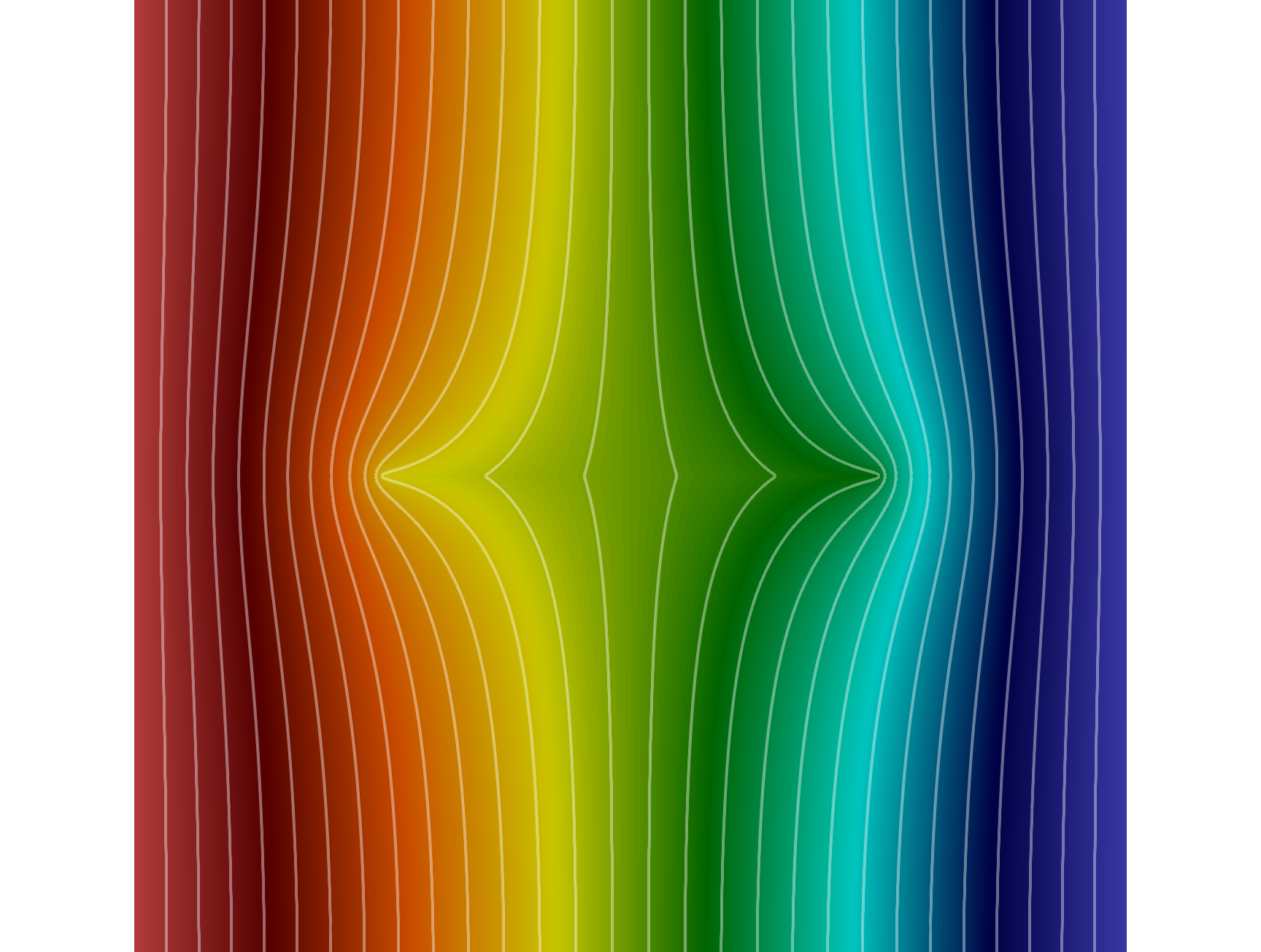}
\includegraphics[width=0.31\textwidth]{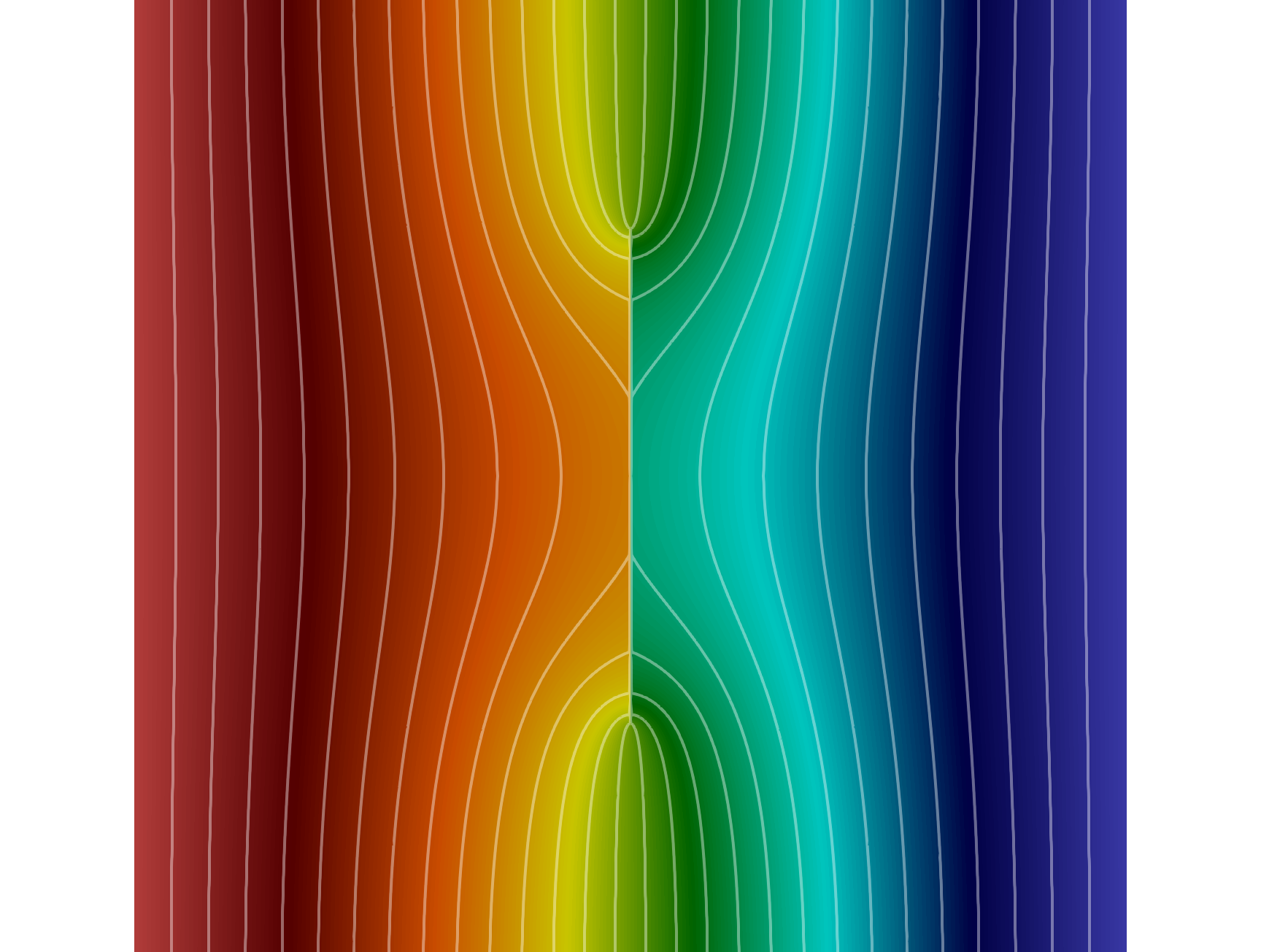}
\caption{\textbf{Example 1.}
Left: Domain and boundary conditions.
Middle: reference solution for conductive fractures (1a).
Right: reference solution for blocking fractures (1b). 
Color range: (0, 1). Thirty uniform contour lines 
from 0 to 1.}
\label{fig:ex1}
\end{figure}

We consider two meshes, see Figure~\ref{fig:ex1-msh}: a coarse unfitted triangular mesh with mesh size $h=0.1$ and a refined unfitted mesh  that performs 3 steps of local mesh refinements near the fractured cells using the procedure detailed in Remark \ref{rk:refine}. 
The coarse mesh has $230$ cells, while the fine mesh has 
$1328$ cells.
\begin{figure}[ht]
\centering
\vspace{1pt}
\includegraphics[width=0.40\textwidth]{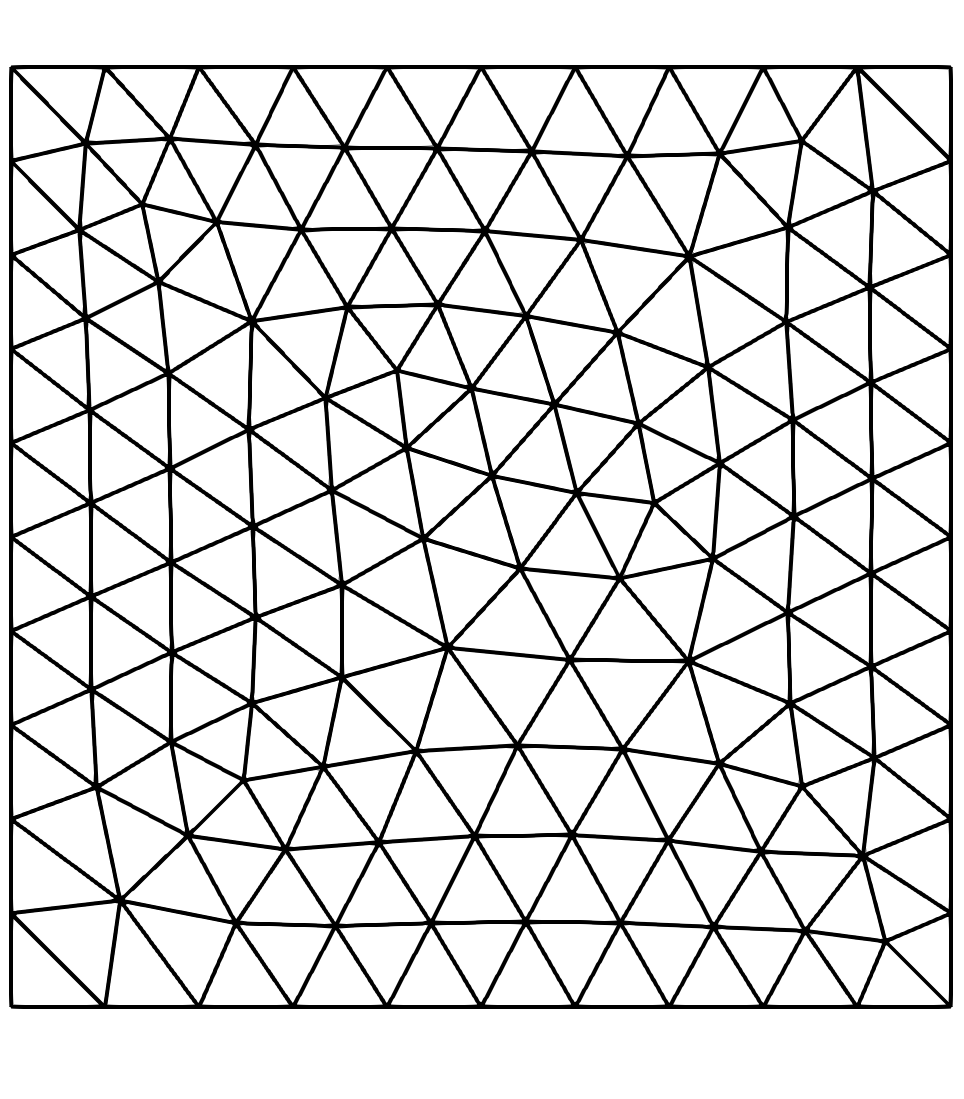}
\includegraphics[width=0.40\textwidth]{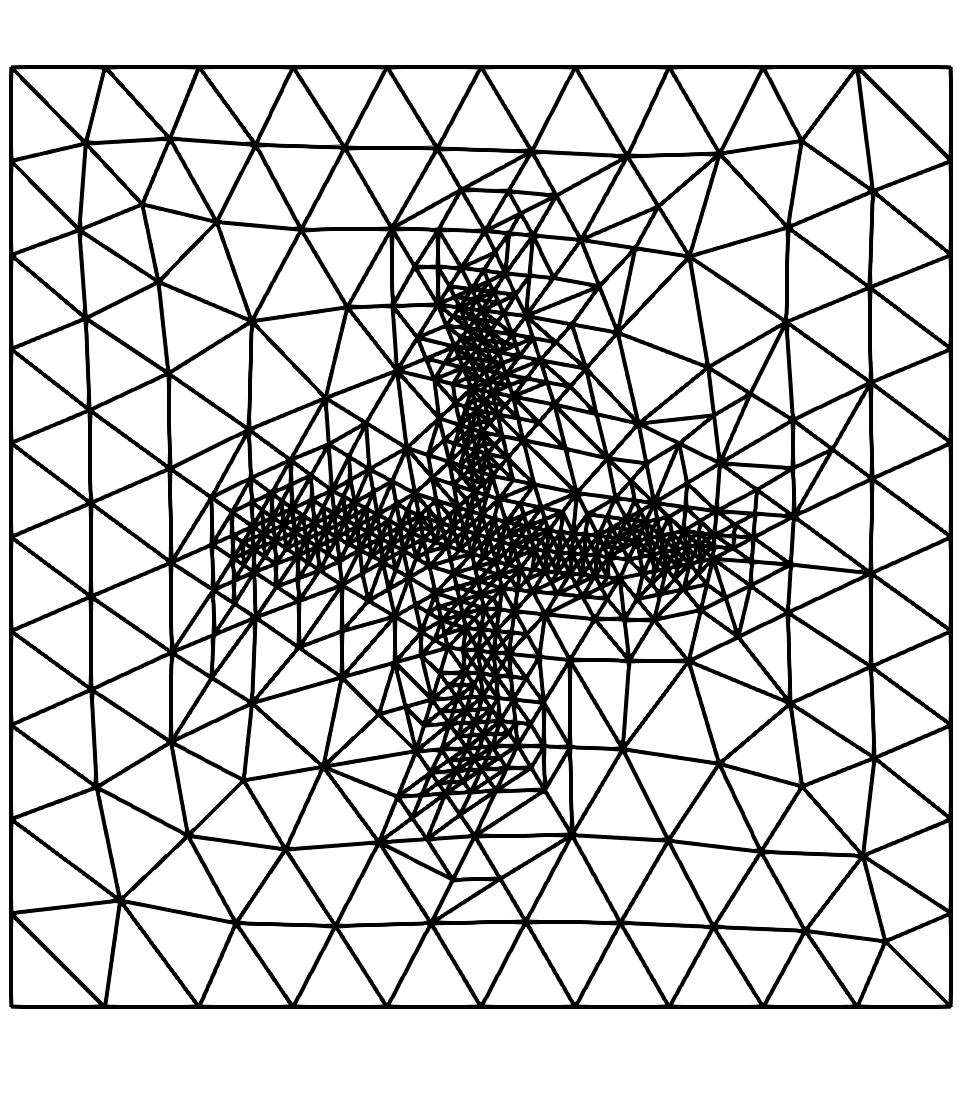}
\vspace{1pt}
\caption{\textbf{Example 1.}
Left: a coarse mesh  with size $h=0.1$.
Right: a locally refined mesh  with $h\approx 0.1/8$
near the fractures.}
\label{fig:ex1-msh}
\end{figure}

We first  study the role of the stabilization function  $\alpha_h$, in particular, the effect of $s_c$ therein, near conductive fractures on the scheme \eqref{hdg}.
We take polynomial degree $k=0,1,2$, 
and vary the scaling power $s_c\in\{1,2,3\}$ of $\alpha_h$ in \eqref{stab} with $C_c=1$.
The pressure approximations along the line $x=0.5$ are shown in Figure~\ref{fig:ex1-stab}.
From these figures, we observe that 
\begin{itemize}
    \item When polynomial degree $k=0$, a convergent result,
    comparing with reference solution, is obtained with $s=2$. For $s=1$ (smaller stabilization) the scheme does not converge as the fracture is not captured. For $s=3$ (larger stabilization) the scheme does not convergence either as it leads to a constant approximation along the fractures, which is consistent with the discussion in Remark \ref{rk-stab} as too large stabilization effectively makes the pressure within conductive fractures to be a global constant for $k=0$.
    \item When polynomial degree $k=1$ or $k=2$, a convergent result  is obtained with $s=3$. The results for  $s=1$
    and $s=2$ leads to locking phenomena as the effects of the fracture is not captured correctly.
    We note that  further increase $s$ from 3 essentially leads to similar results as those with $s=3$ for this example. 
\end{itemize}
\begin{figure}[ht]
\centering
\includegraphics[width=0.45\textwidth]{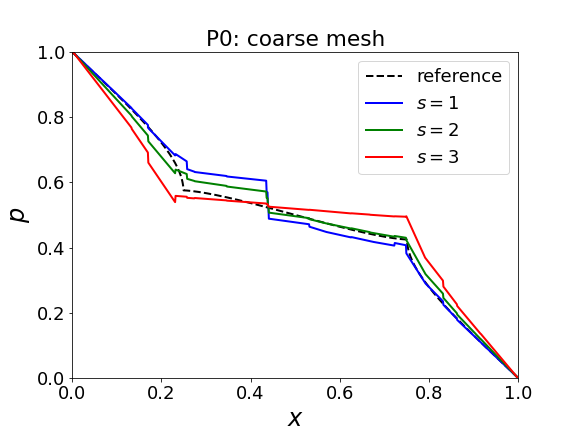}
\includegraphics[width=0.45\textwidth]{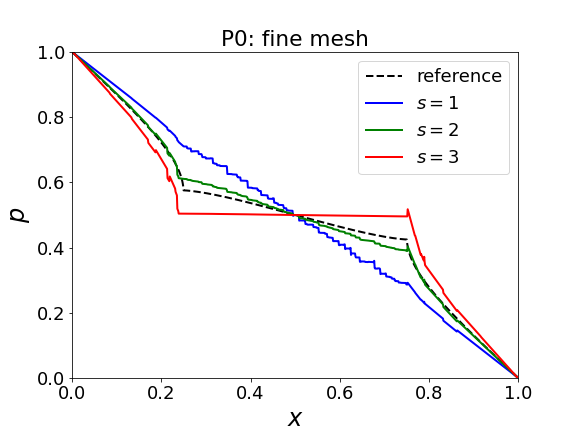}\\
\includegraphics[width=0.45\textwidth]{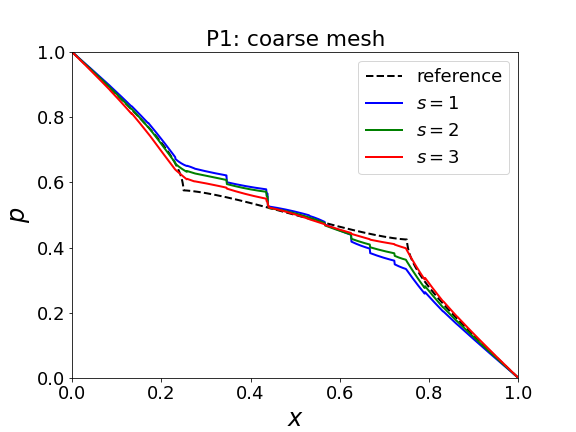}
\includegraphics[width=0.45\textwidth]{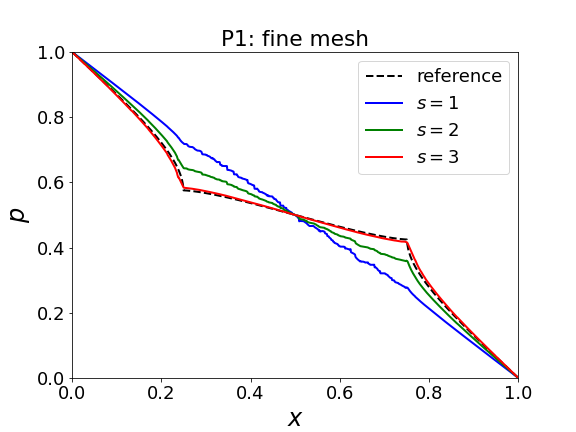}\\
\includegraphics[width=0.45\textwidth]{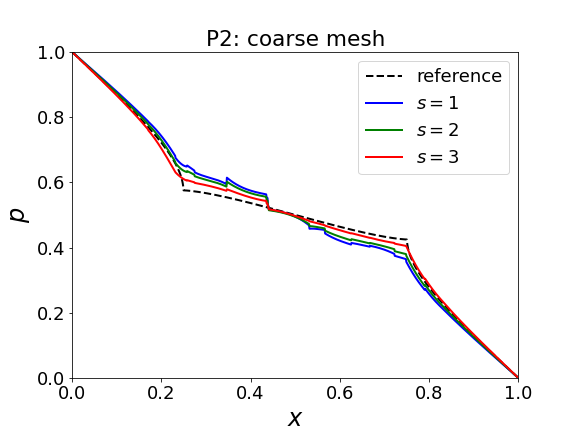}
\includegraphics[width=0.45\textwidth]{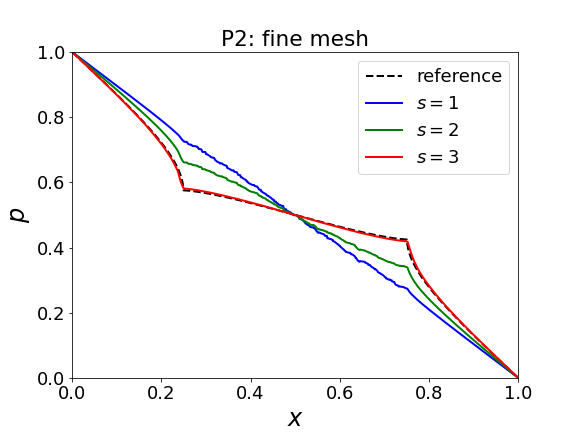}\\
\caption{\textbf{Example 1(a)}: conductive fractures. Pressure along cut line $x=0.5$ for 
the scheme \eqref{hdg} with different stabilization parameter with $s_c=s=1,2,3$ in \eqref{stab}.
}
\label{fig:ex1-stab}
\end{figure}
Contour plots of the pressure on the fine mesh for $k=0$ with $s_c=2$ and for $k=1, 2$ with $s_c=3$ are shown in 
Figure~\ref{fig:ex1-cont}. We observe these results are qualitatively similar to the reference solution in the middle of Figure~\ref{fig:ex1-msh}.
\begin{figure}[ht]
\centering
\includegraphics[width=0.31\textwidth]{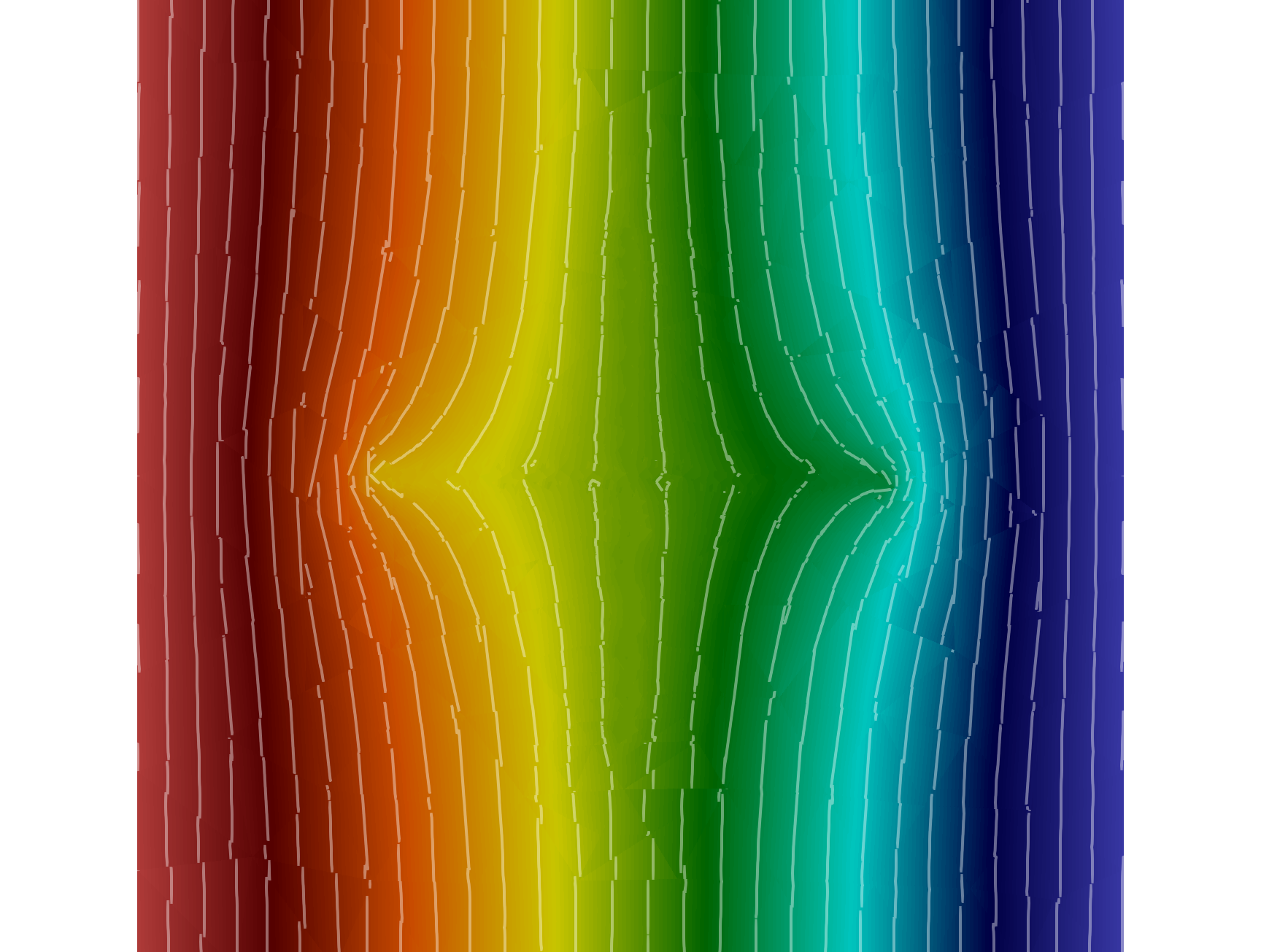}
\includegraphics[width=0.31\textwidth]{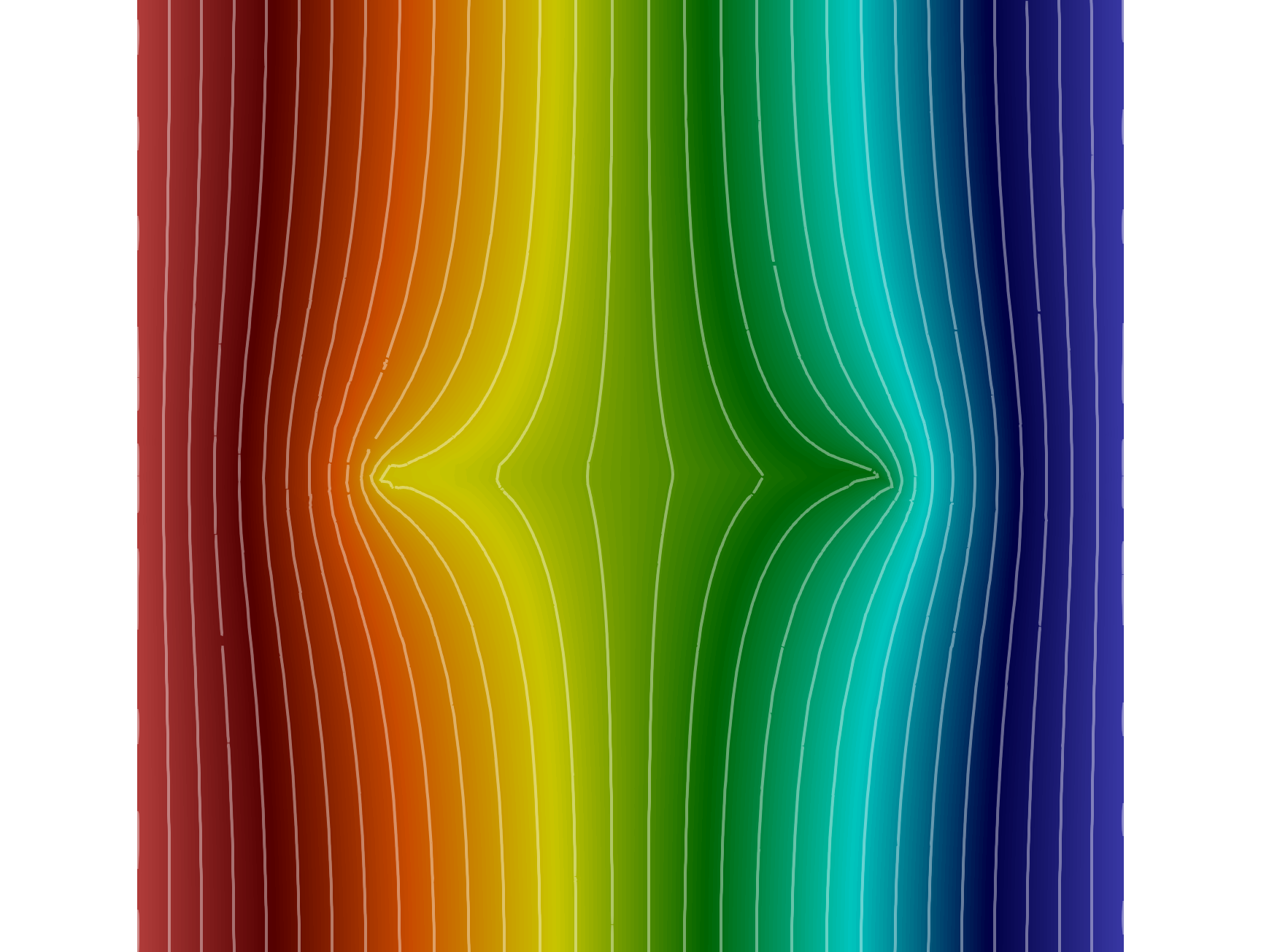}
\includegraphics[width=0.31\textwidth]{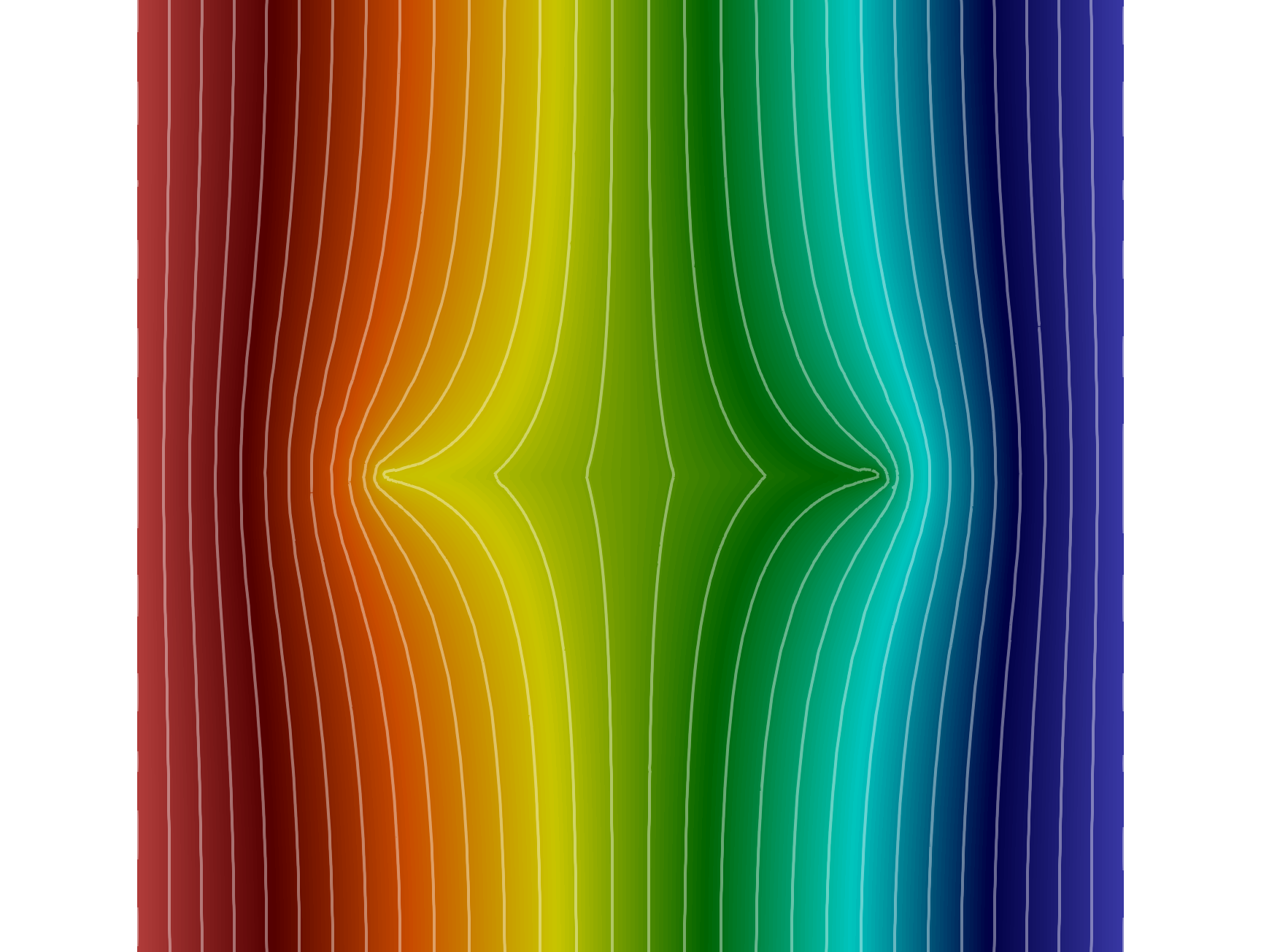}
\caption{\textbf{Example 1(a)}: conductive fractures. Pressure contour for $k=0, s=2$ (left), 
$k=1, s=3$ (middle), and $k=2, s=3$ (right).
}
\label{fig:ex1-cont}
\end{figure}

We next study the performance of our scheme for the blocking fracture case. 
For this case, we find that taking 
the penalty parameter on blocking fractured cells to be the same as regular cells (i.e., $C_b=1$ and $s_b=0$) already lead to a convergent scheme, so we report results for this choice of parameters. 
The pressure approximations along the line $x=0.5$
for $k=0,1,2$ on the coarse and fine meshes are shown in Figure~\ref{fig:ex1-stab-b}.
Convergence to the reference solution is observed for all cases as mesh refines.
\begin{figure}[ht]
\centering
\includegraphics[width=0.32\textwidth]{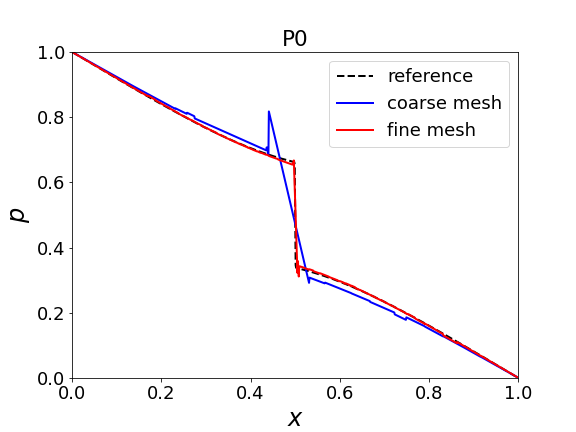}
\includegraphics[width=0.32\textwidth]{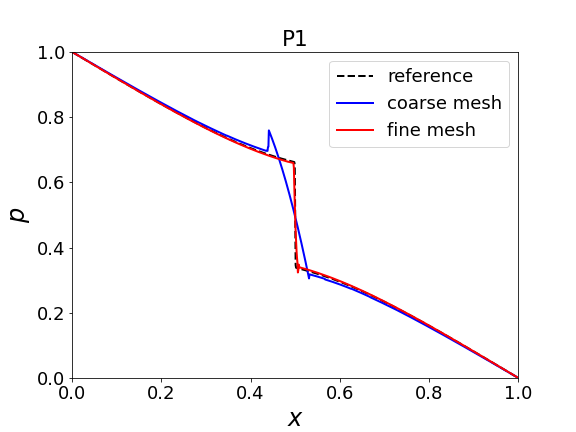}
\includegraphics[width=0.32\textwidth]{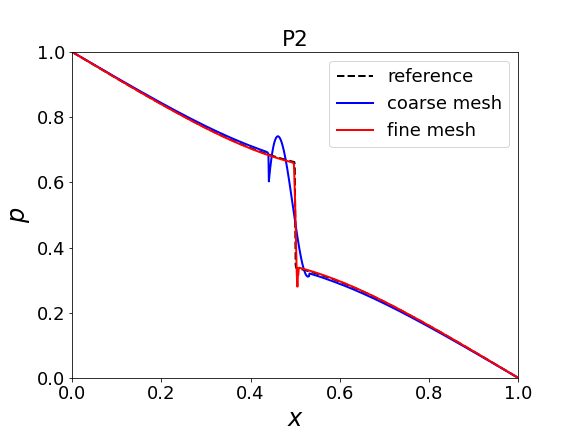}
\caption{\textbf{Example 1(b)}: Pressure along cut line $x=0.5$ for 
the scheme \eqref{hdg} for the blocking fracture case.
}
\label{fig:ex1-stab-b}
\end{figure}
Contour plots of the pressure on the fine mesh are shown in 
Figure~\ref{fig:ex1-contb}. We again observe these results are qualitatively similar to the reference solution in the right of Figure~\ref{fig:ex1-msh}.
\begin{figure}[ht]
\centering
\includegraphics[width=0.31\textwidth]{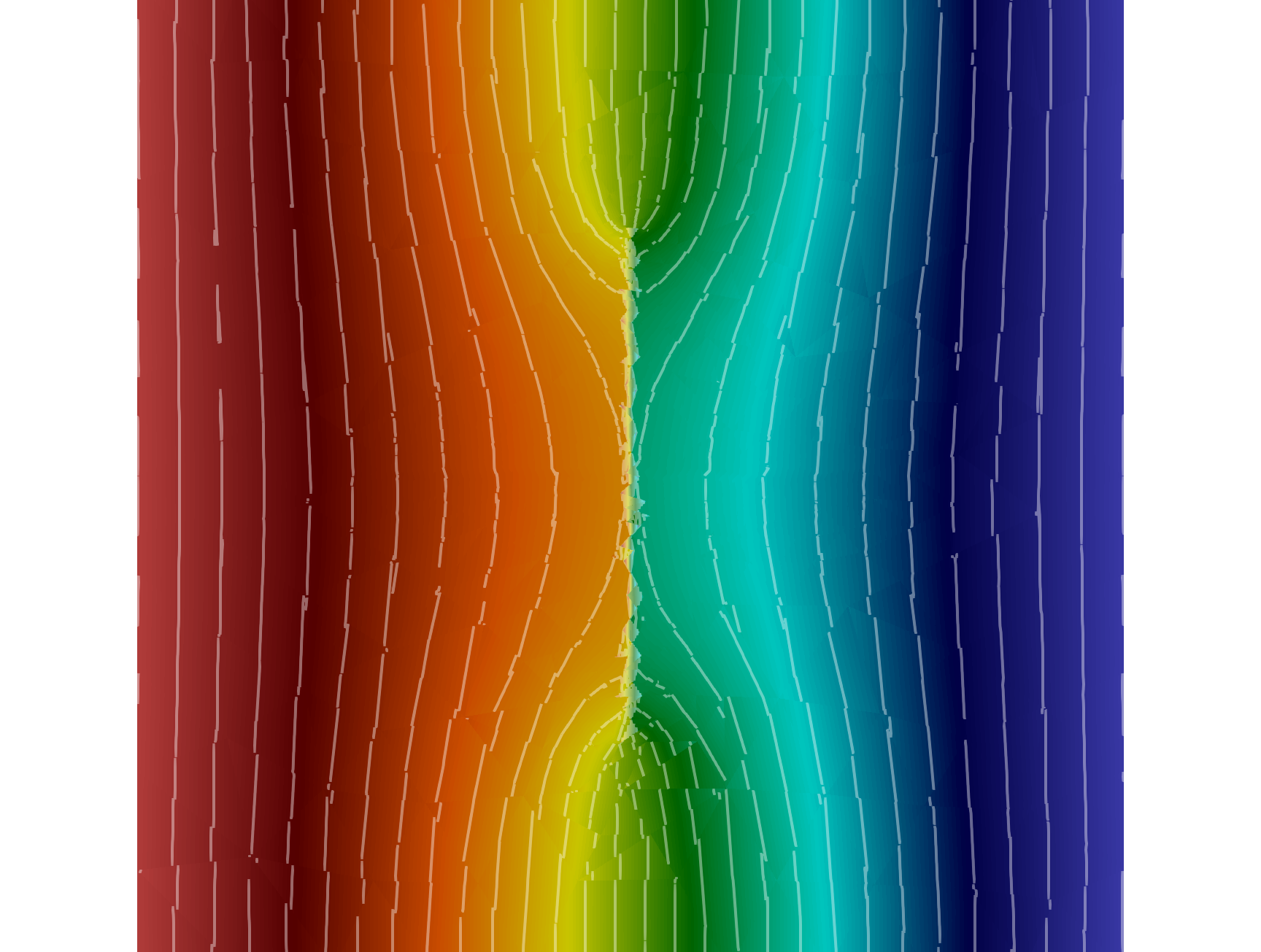}
\includegraphics[width=0.31\textwidth]{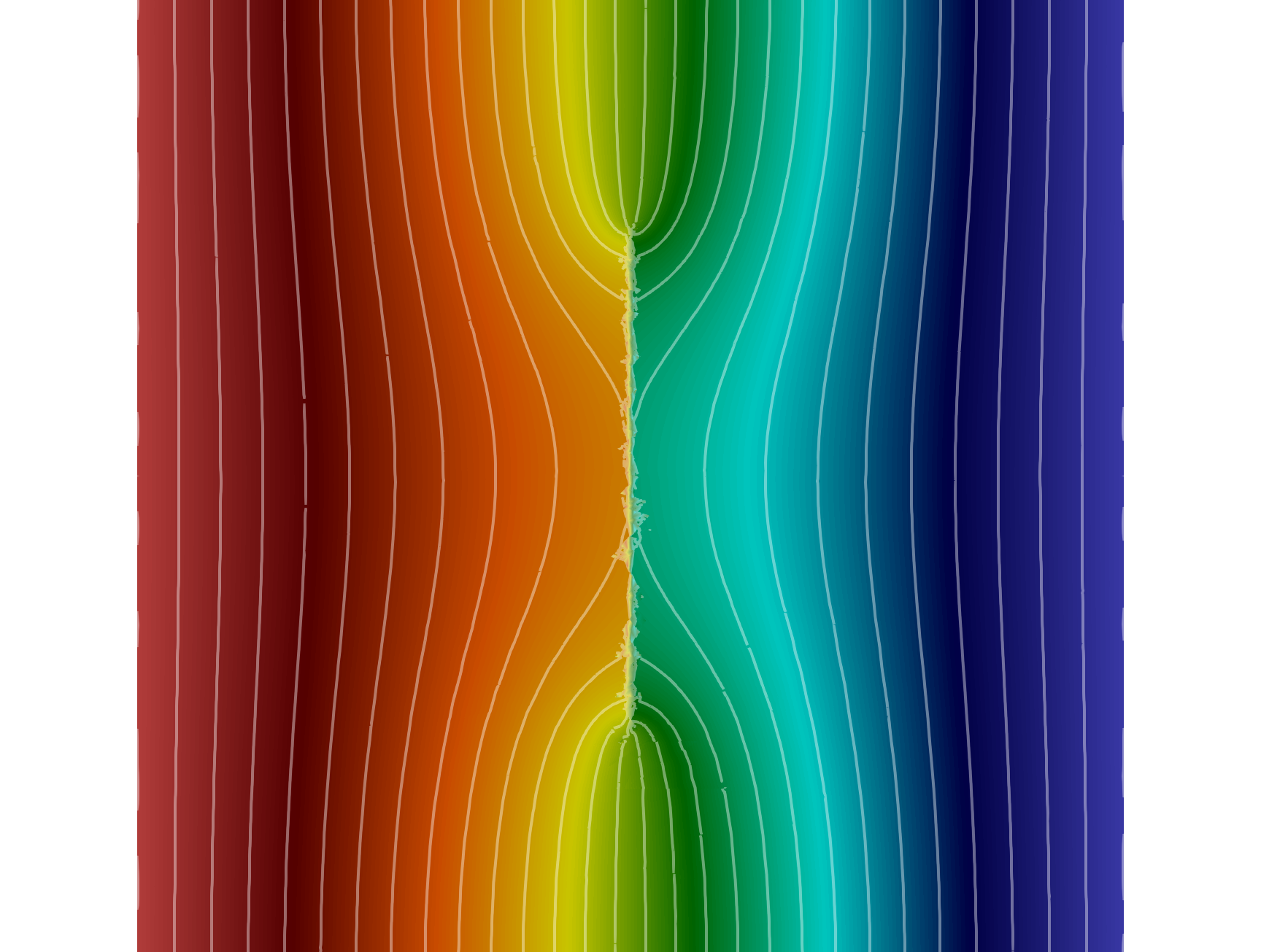}
\includegraphics[width=0.31\textwidth]{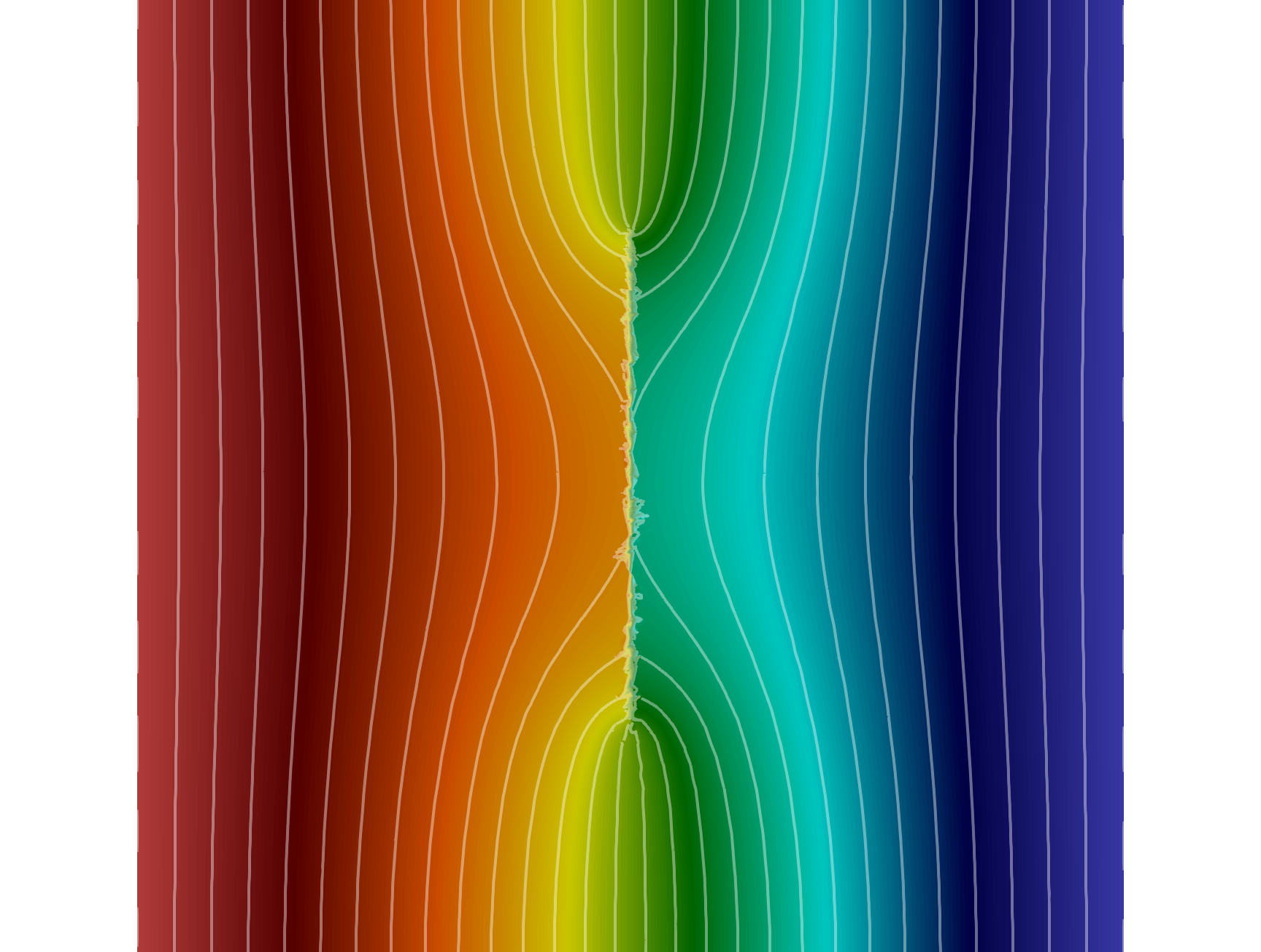}
\caption{\textbf{Example 1(b)}: Pressure contour for $k=0$ (left), 
$k=1$ (middle), and $k=2$ (right).
}
\label{fig:ex1-contb}
\end{figure}

\subsection*{Example 2: Complex fracture network in 2D}
This test case considers a small but complex fracture network that includes eight 
conductive fractures
and two blocking fractures. The domain and boundary conditions are shown in Figure
\ref{fig:complex} (red represents conductive fractures, blue represents blocking fractures). All fractures are represented by line segments, and the exact coordinates for the fracture positions can be found in
\cite[Appendix C]{FLEMISCH2018239}.
The fracture thickness
is $\epsilon=10^{-4}$ for all fractures, and permeability is
$k_i=10^4$ for all fractures except
for fractures 4 and 5 which are blocking fractures with $k_i=10^{-4}$ .
We consider two subcases (a) and (b) with a pressure gradient which is predominantly vertical and horizontal respectively.
\begin{figure}[ht]
  \centering
    \includegraphics[width=.86\textwidth]{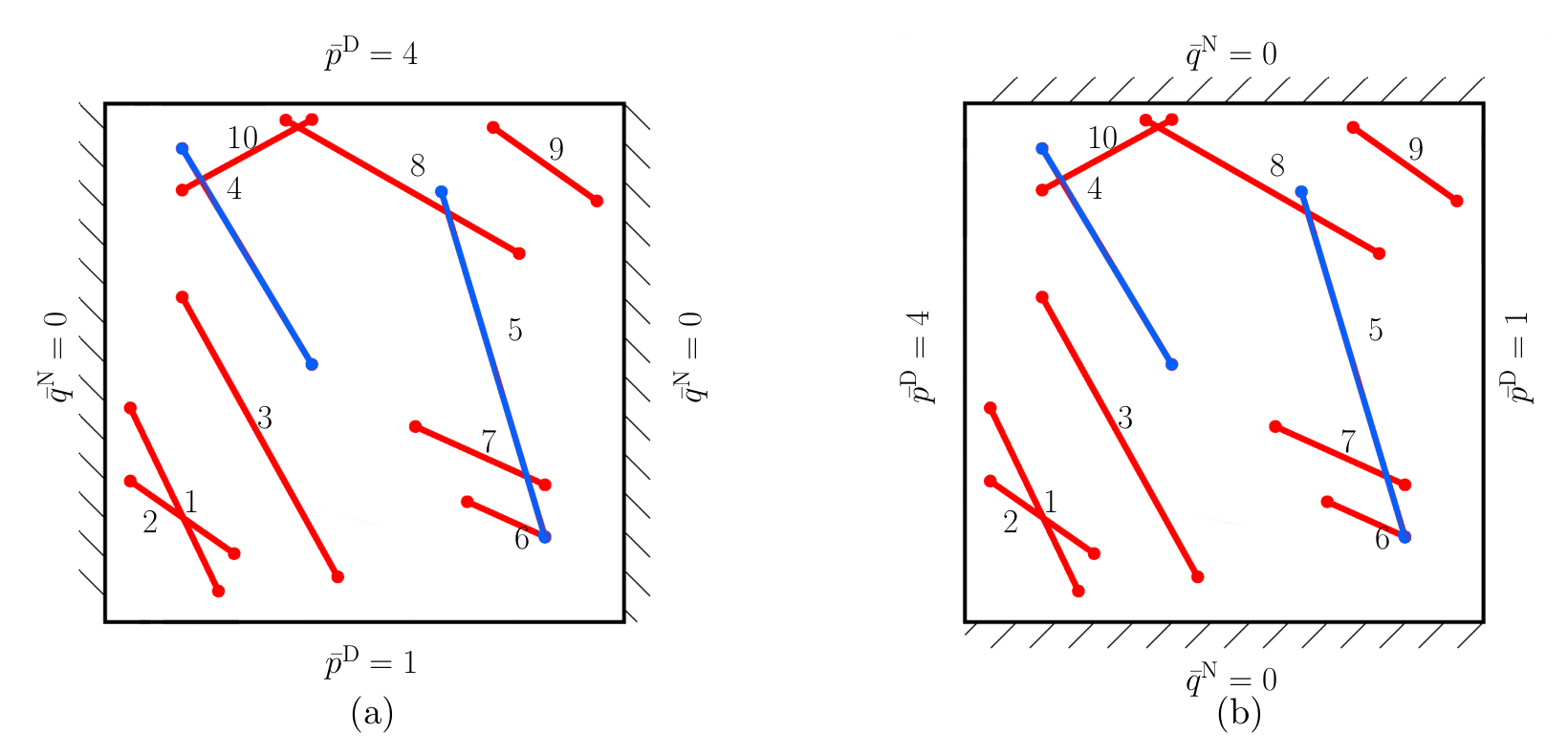}
  \caption{Benchmark 3: computational domain and boundary conditions.}
  \label{fig:complex}
\end{figure}

We consider two meshes, see Figure~\ref{fig:ex2-msh}: a coarse triangular mesh with mesh size $h=0.1$ and a refined mesh  that performs 3 steps of local mesh refinements near the fractures using the procedure detailed in Remark \ref{rk:refine}.
The coarse mesh has $230$ cells, while the fine mesh has 
$1380$ cells.
\begin{figure}[ht]
\centering
\vspace{1pt}
\includegraphics[width=0.40\textwidth]{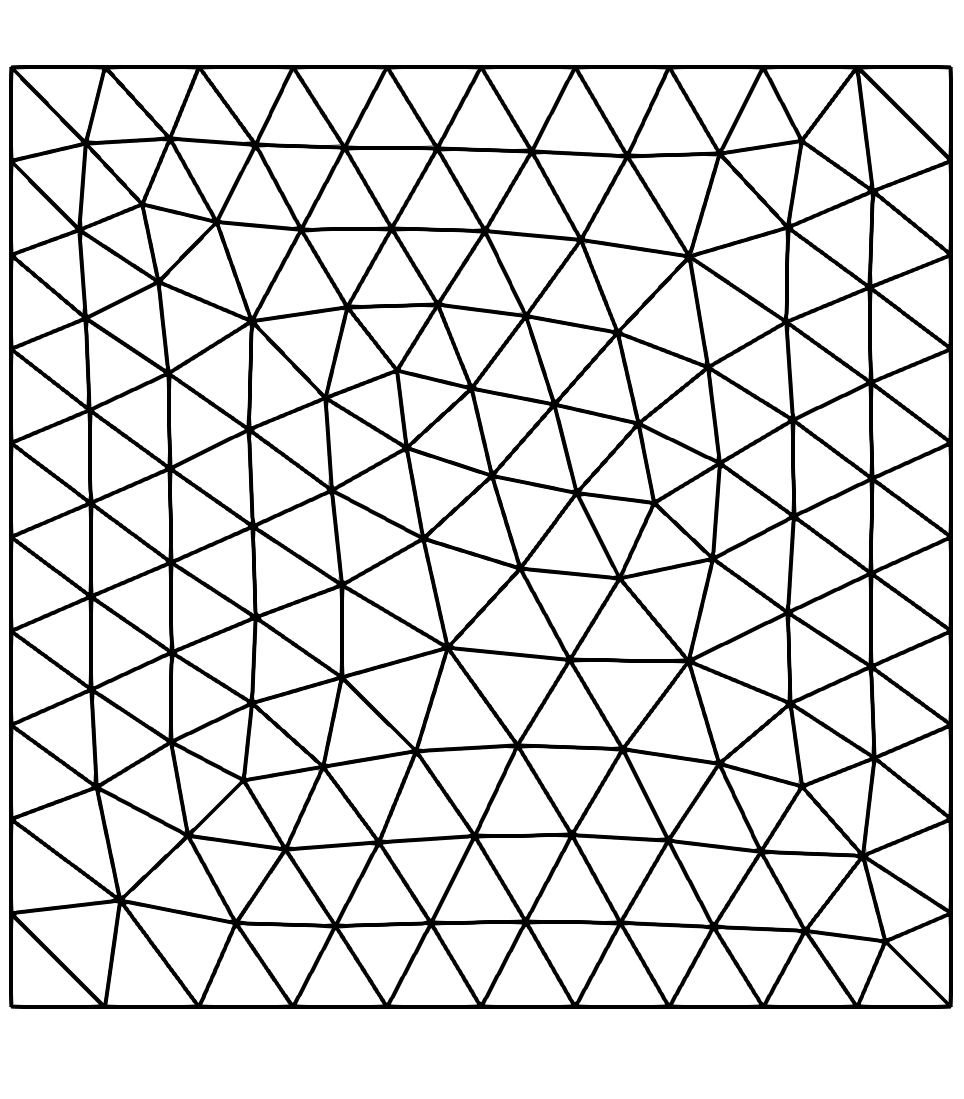}
\includegraphics[width=0.40\textwidth]{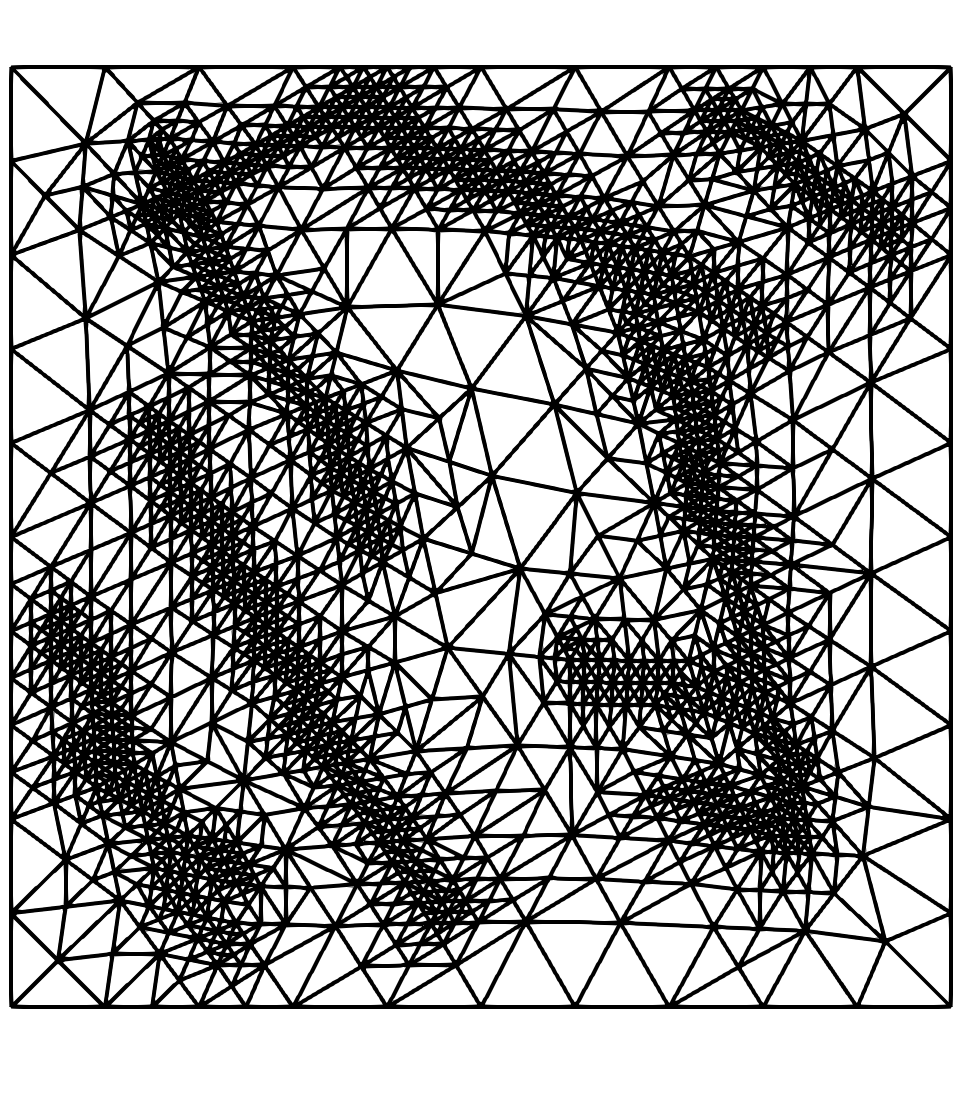}
\vspace{1pt}
\caption{\textbf{Example 2.}
Left: a coarse mesh  with size $h=0.1$.
Right: a locally refined mesh  with $h\approx 0.1/8$
near the fractures.}
\label{fig:ex2-msh}
\end{figure}

We take polynomial degree $k=0,1,2$. 
As suggested from the previous example, we take 
penalty parameters $C_b=C_c=1$ and $s_b=0$, and use $s_c=2$ if $k=0$  and $s_c=3$ if $k=1,2$.
The quantity of interest is the pressure approximation along the line segment $(0, 0.5)--(1, 0.9)$ for both cases, which are recorded in Figure~\ref{fig:ex2-cut}.
We observe a good convergence towards the reference solution as mesh refines for all cases. 
\begin{figure}[ht]
\centering
\includegraphics[width=0.45\textwidth]{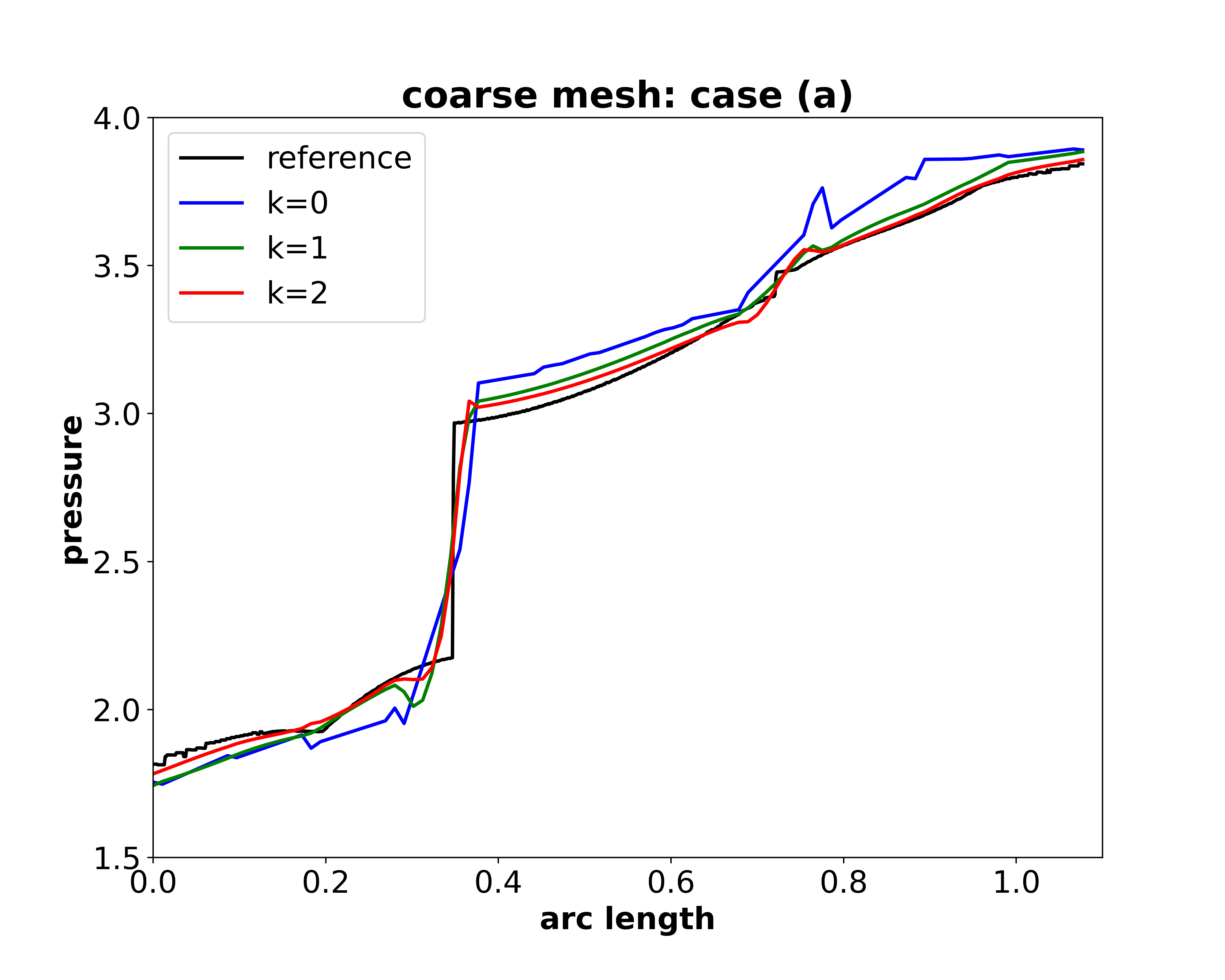}
\includegraphics[width=0.45\textwidth]{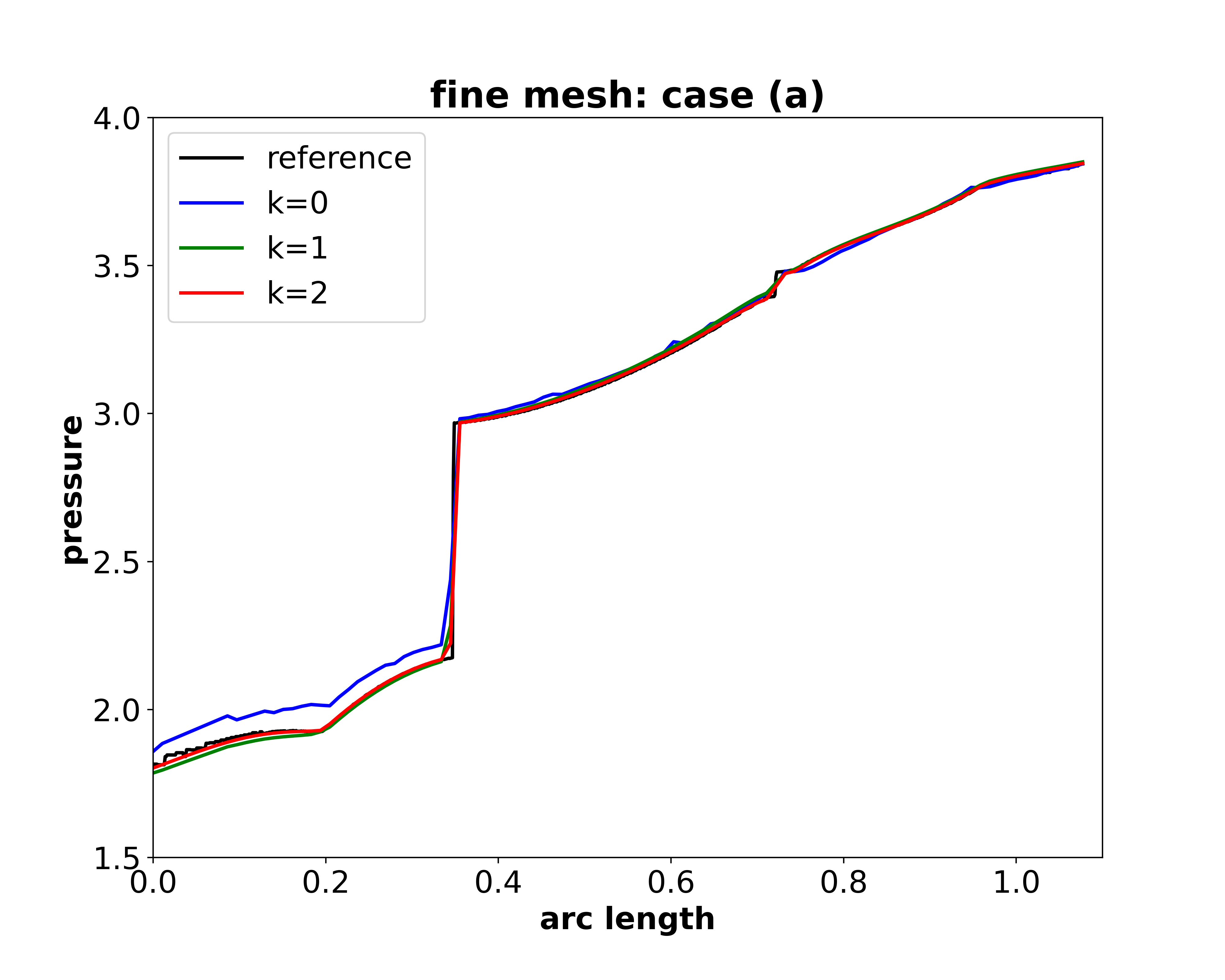}\\
\includegraphics[width=0.45\textwidth]{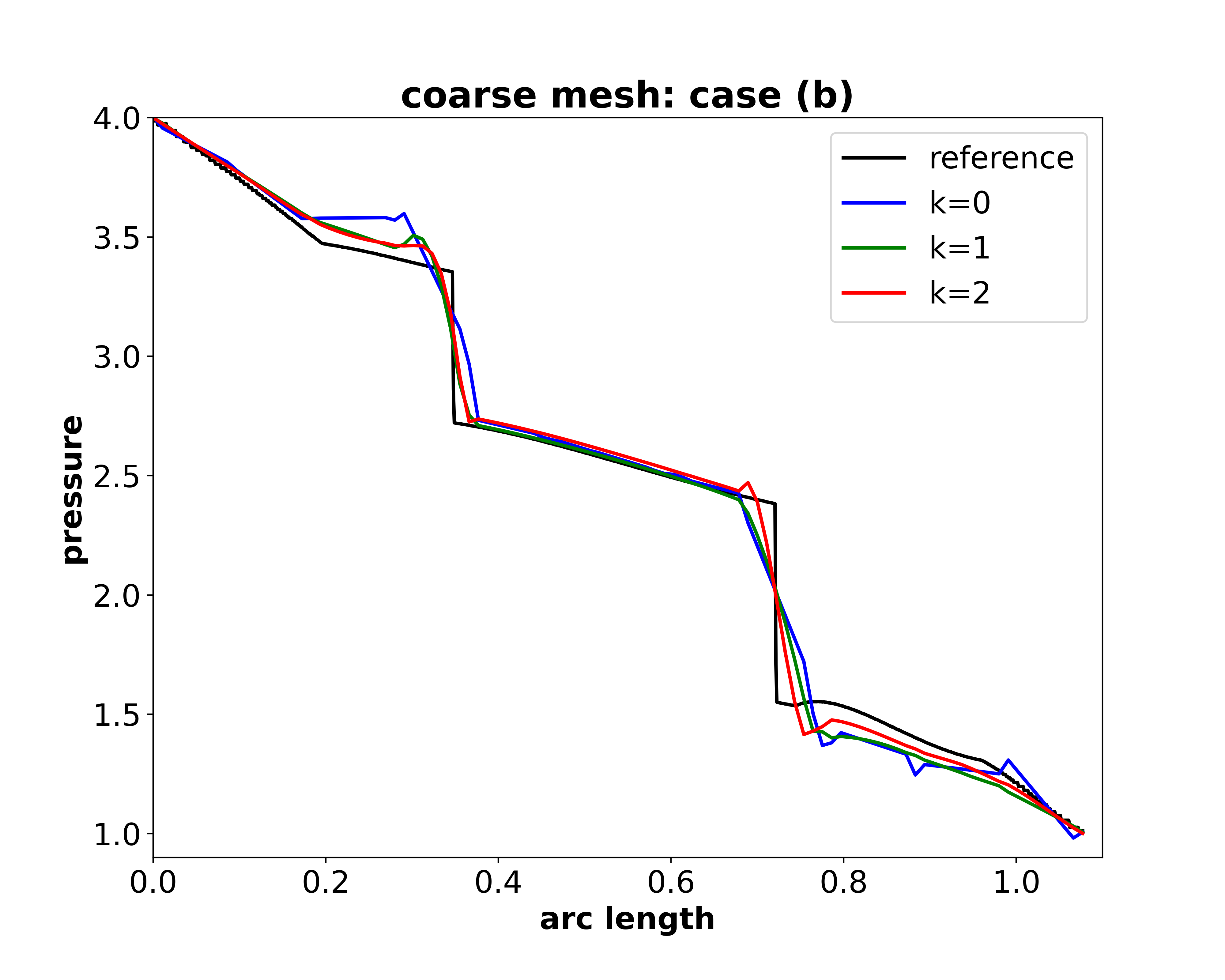}
\includegraphics[width=0.45\textwidth]{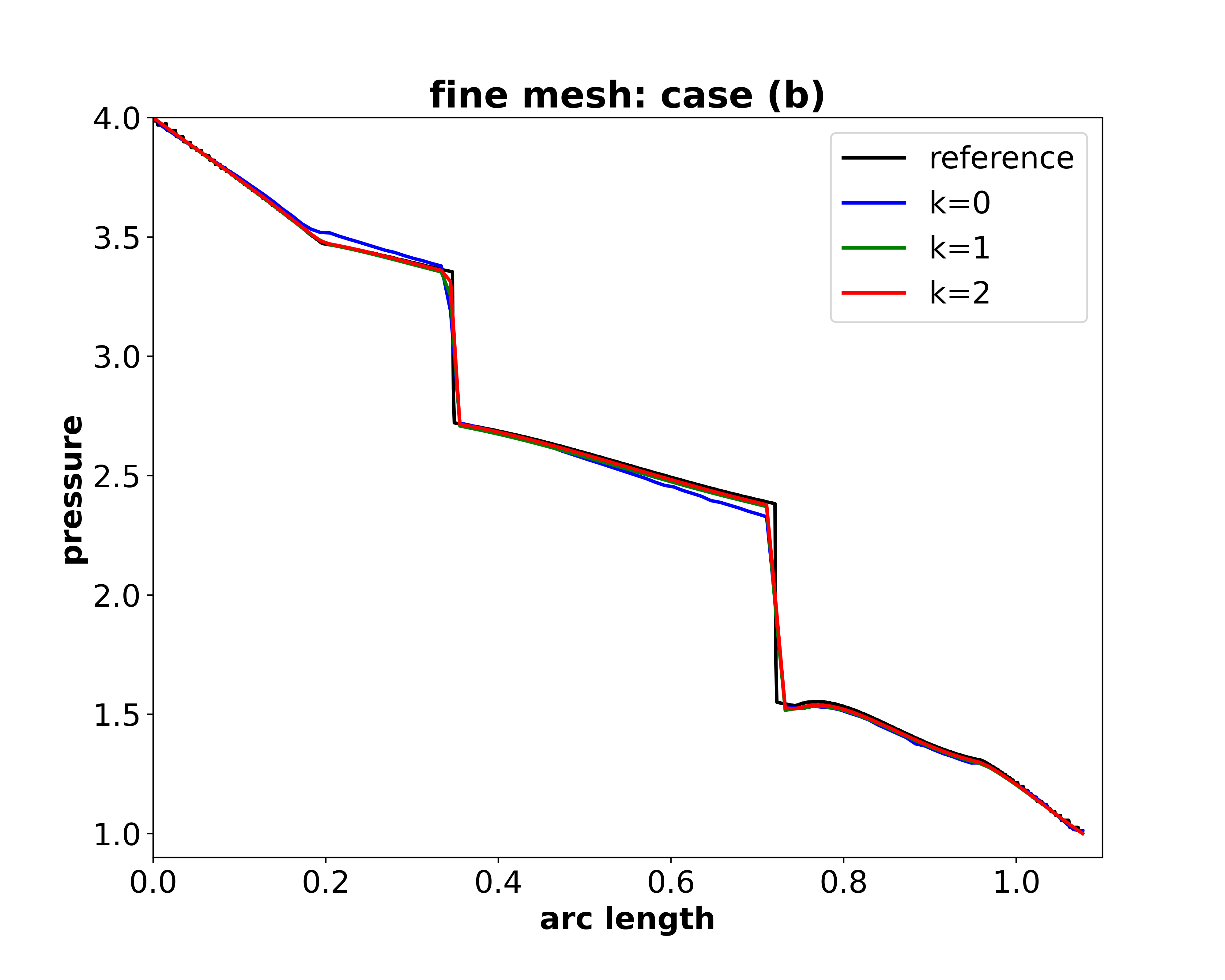}\\
\caption{\textbf{Example 2.} Pressure along line segment $(0,0.5)--(1,0.9)$ for 
the scheme \eqref{hdg} on two meshes.
Top row: case (a); Bottom row: case (b).
}
\label{fig:ex2-cut}
\end{figure}
Contour plots of the pressure on the fine mesh are shown in 
Figure~\ref{fig:ex2-cont}, which are consist with the results in the literature \cite{FLEMISCH2018239}.
\begin{figure}[ht]
\centering
\includegraphics[width=0.31\textwidth]{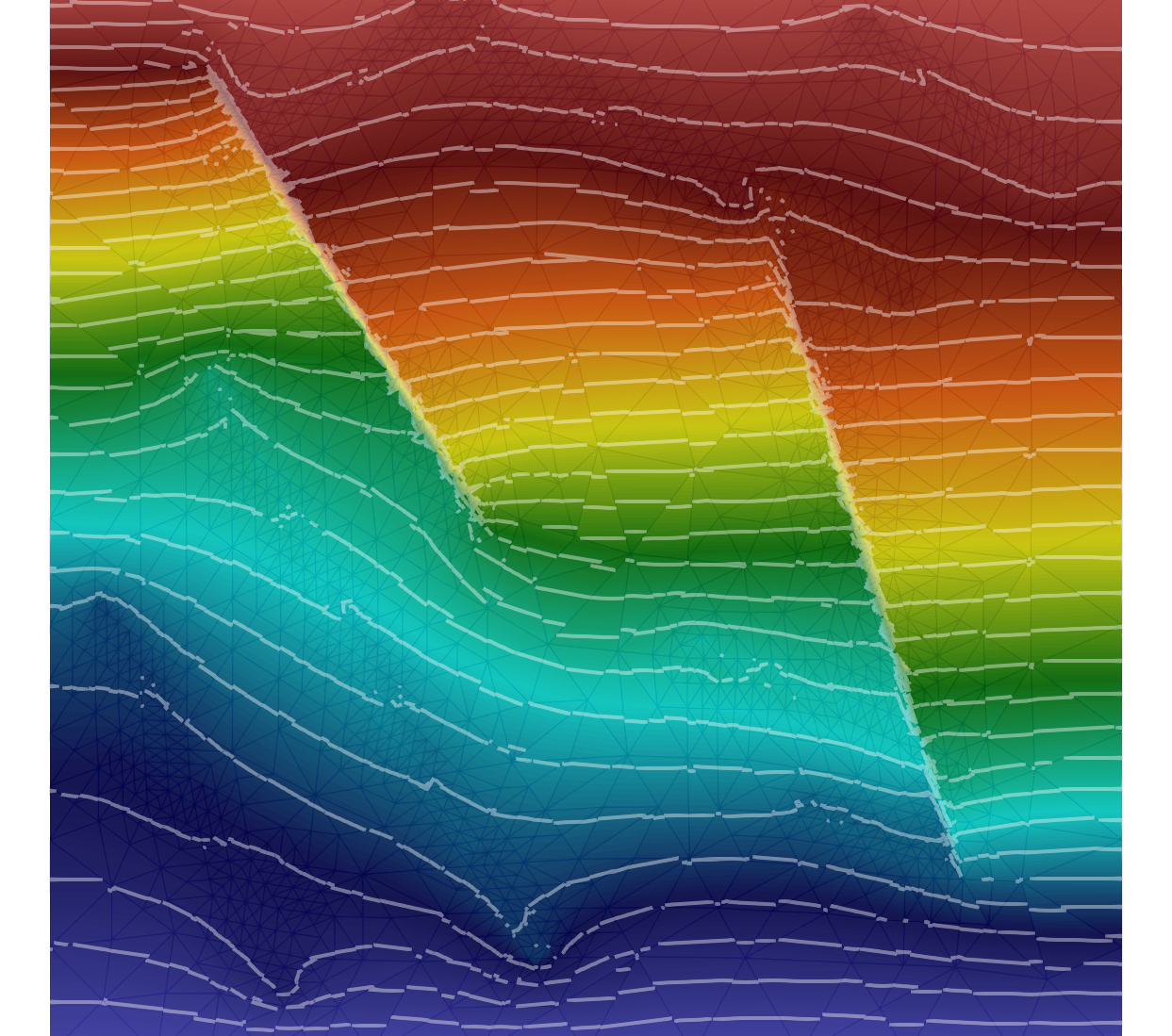}
\includegraphics[width=0.31\textwidth]{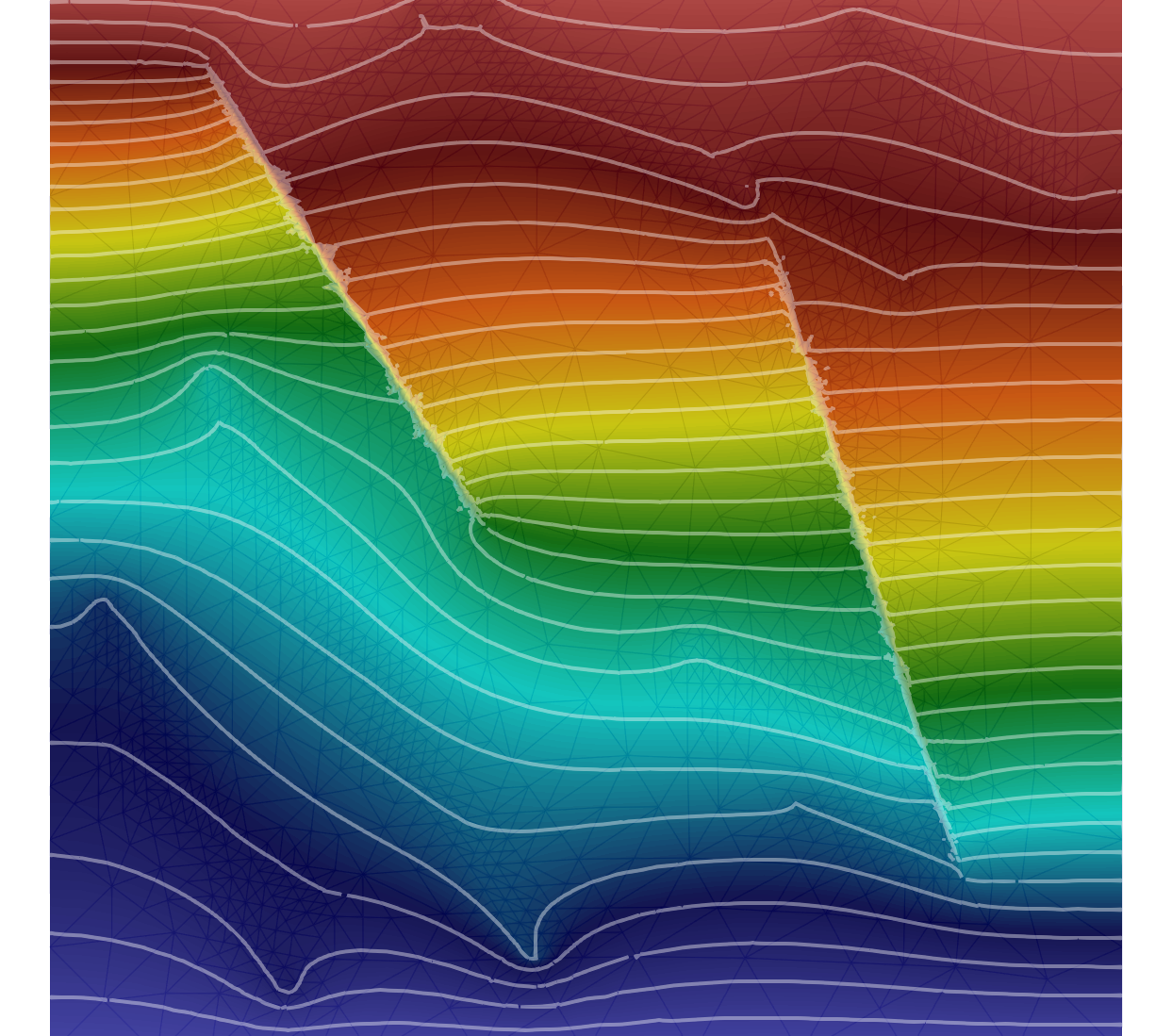}
\includegraphics[width=0.31\textwidth]{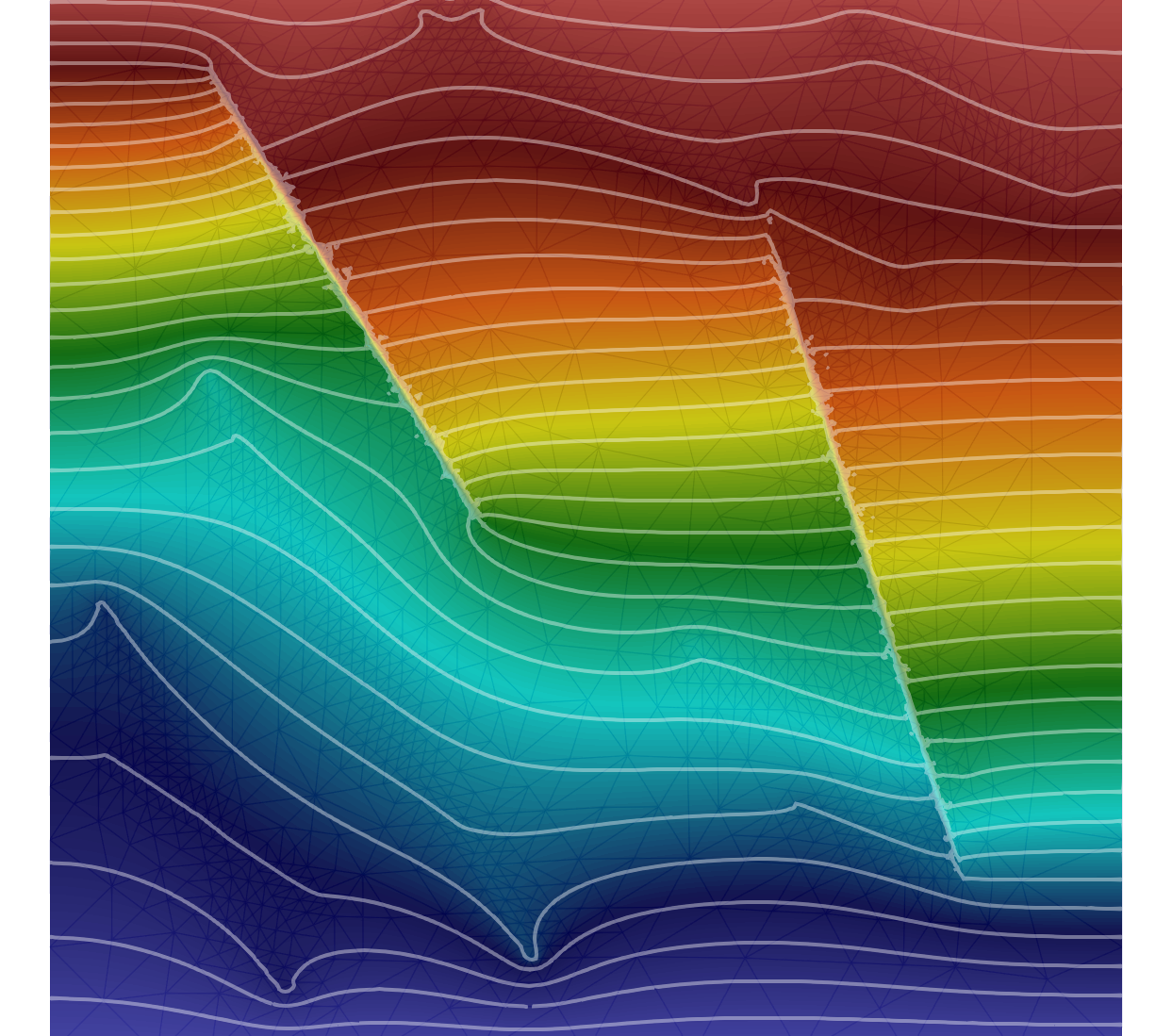}
\includegraphics[width=0.31\textwidth]{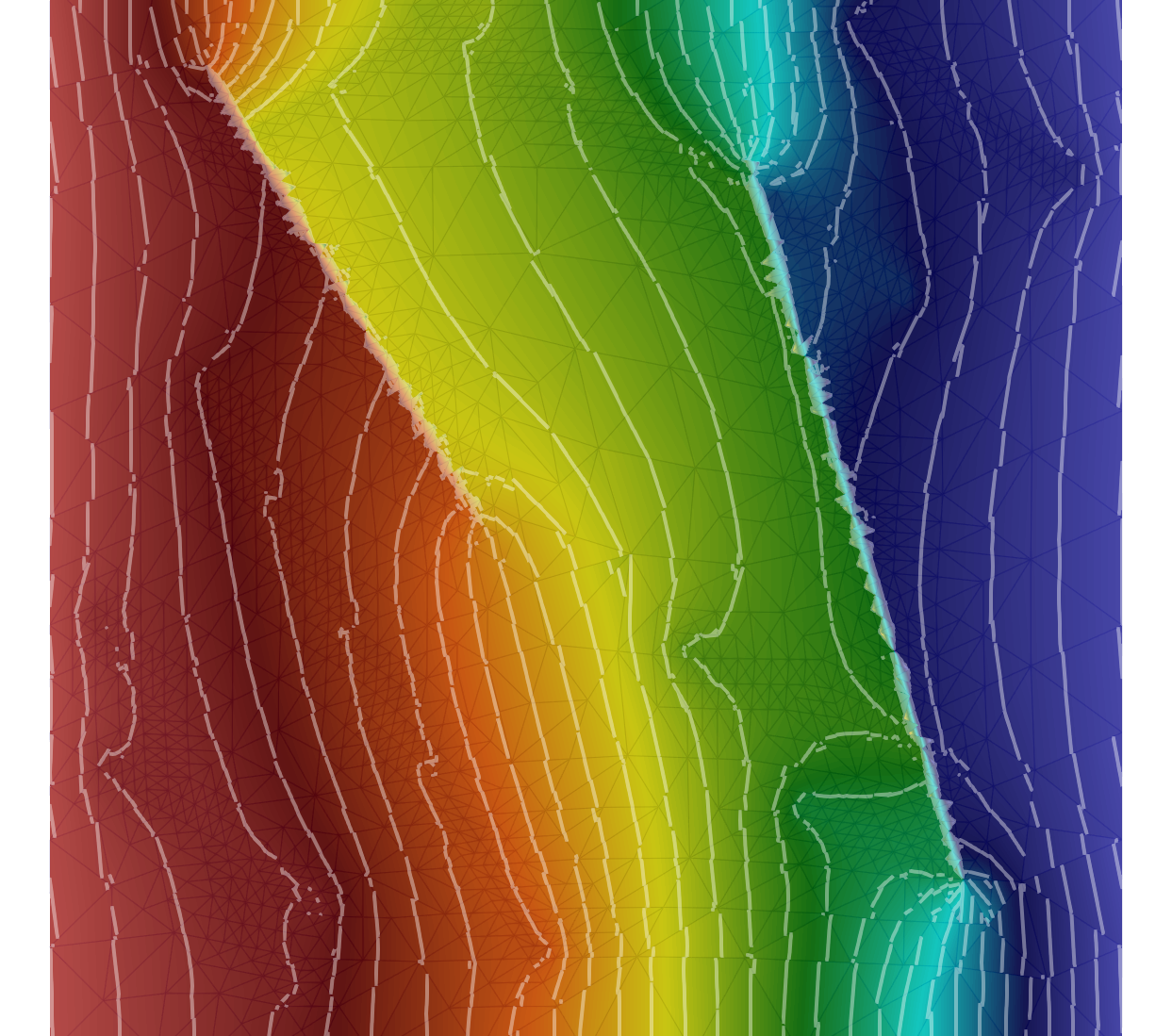}
\includegraphics[width=0.31\textwidth]{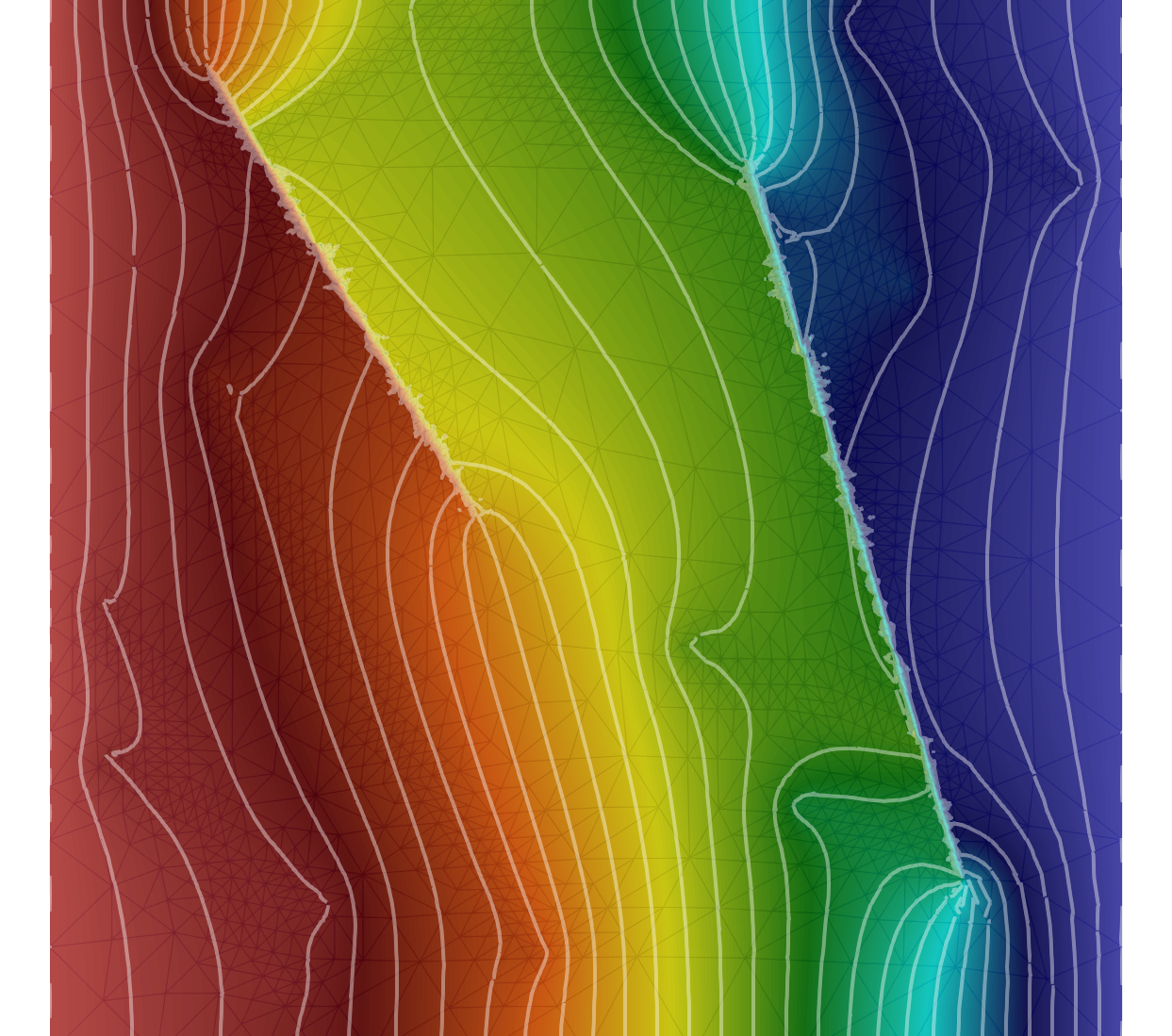}
\includegraphics[width=0.31\textwidth]{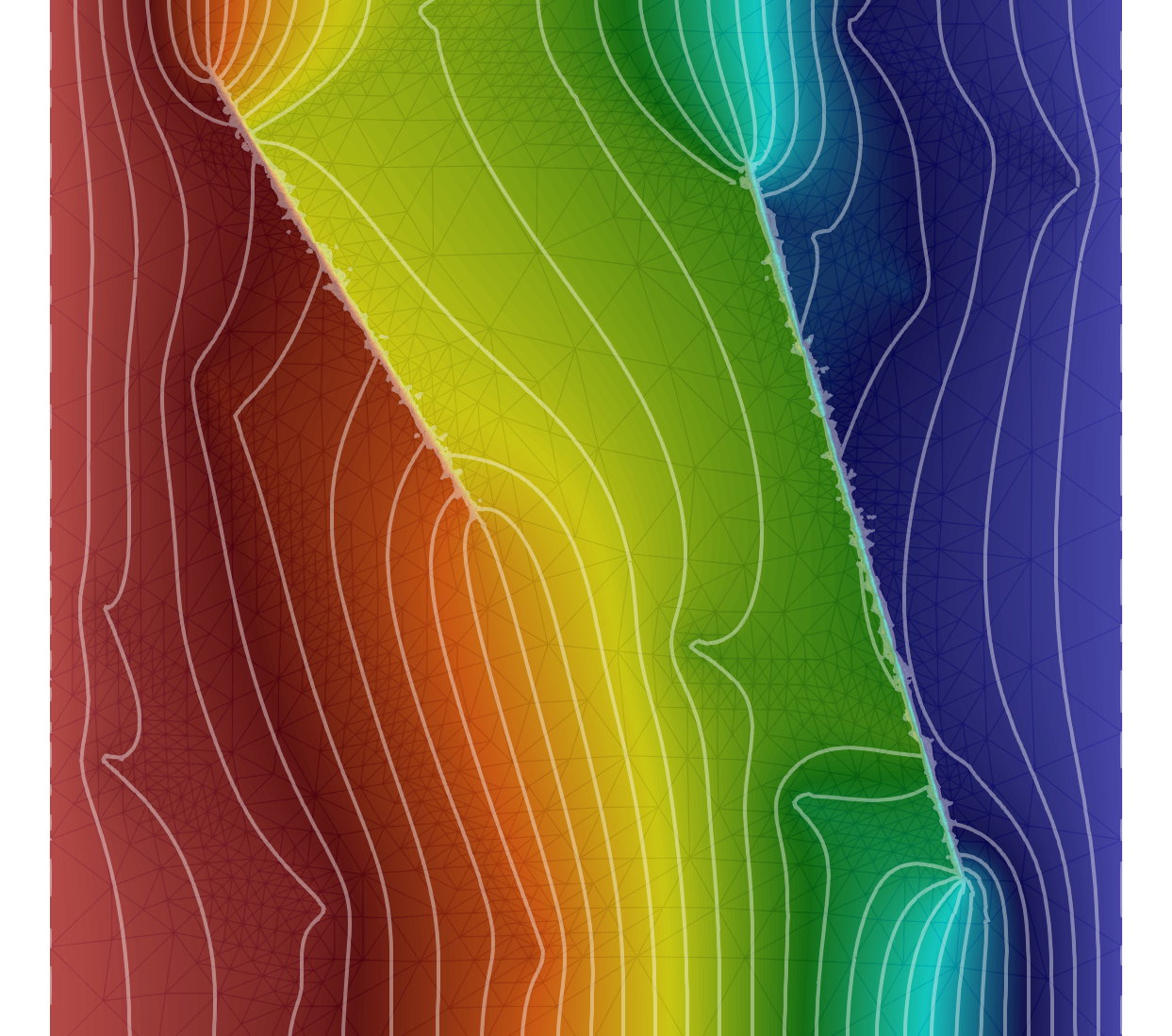}
\caption{\textbf{Example 2.} Pressure contour for $k=0$ (left), 
$k=1$ (middle), and $k=2$ (right).
Color range: (0,1). Thirty uniform contour lines 
from 0 to 1. 
}
\label{fig:ex2-cont}
\end{figure}

\subsection*{Example 3: a Realistic Case in 2D}
We consider a real set of fractures from an interpreted outcrop in
the Sotra island, near Bergen in Norway.
The setup is adapted from \cite[Benchmark 4]{FLEMISCH2018239}.
The domain along with boundary conditions is given in  Figure~\ref{fig:ex3-domain}. 
The size of the domain is 700 $m$ $\times$ 600 $m$ with uniform scalar permeability $\mathbb K_m=10^{-14}m^2$. 
The set of fractures is composed of 63 line segments 
with thickness $\epsilon=10^{-2}m^2$. The exact coordinates for the
fracture positions are provided in the git repository \url{https://git.iws.uni-stuttgart.de/benchmarks/fracture-flow}.
Three subcases will be considered: (a) all conductive fractures with 
permeability $k_c=10^{-8}m^2$, (b) all blocking fractures with permeability $k_b=10^{-20}m^2$, and (c) 9 blocking fractures with $k_b=10^{-20}m^2$ and 54 conductive fractures with
$k_c=10^{-8}m^2$. 
Location of the blocking/conductive fractures for case (c) are marked in red/blue in the right panel of Figure~\ref{fig:ex3-msh}.
We note that while case (a) has been extensively studied, see e.g. \cite{FLEMISCH2018239}.
The other two cases are new. 
\begin{figure}[ht]
  \centering
    \includegraphics[width=.6\textwidth]{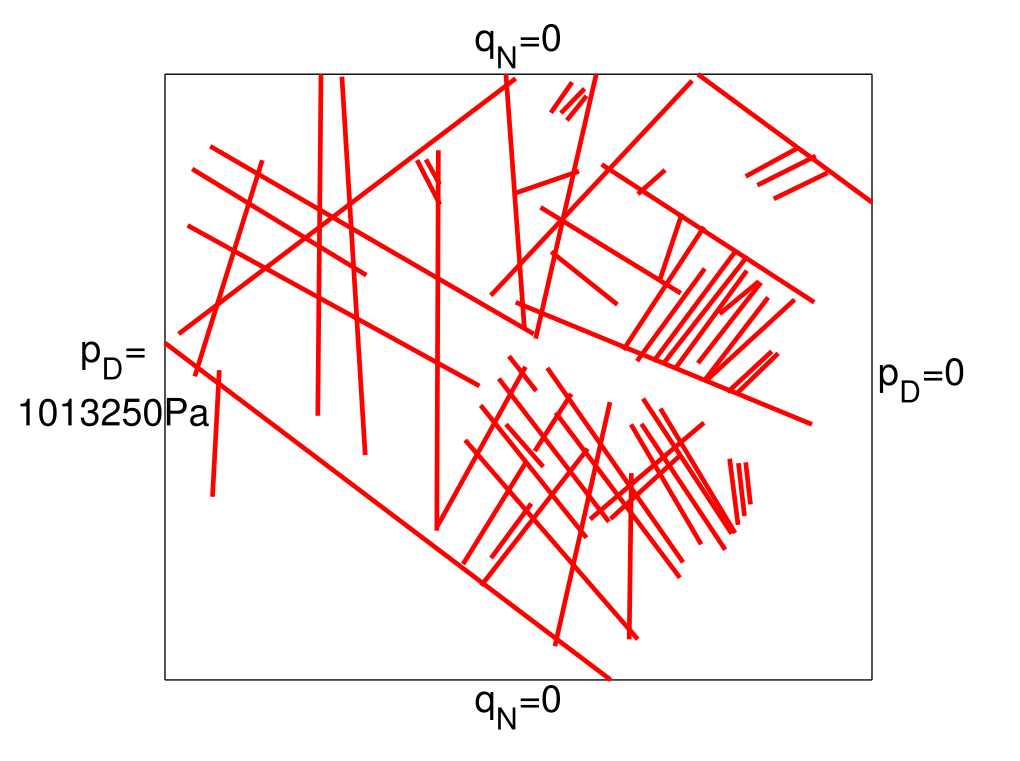}
  \caption{\textbf{Example 3:} Computational domain and boundary conditions.
  }
  \label{fig:ex3-domain}
\end{figure}

Here we run simulation on a non-dimensional setting to avoid extreme values where 
the domain is scaled back to be $\Omega = (0,1)\times (0, 6/7)$, 
matrix permeability $\mathbb K_m=1$, and inflow pressure boundary condition  $p_D=1$ on the left boundary.
We consider our scheme \eqref{hdg} with polynomial degree $k=0,1,2$ 
on an unfitted mesh obtained from a uniform triangular mesh with $h=0.01$ by performing two steps of local mesh refinements around the fractured cells; see left of Figure~\ref{fig:ex3-msh}.
The mesh has about $154k$ total triangular cells, and 
about $27.5k$ fractured cells, which further split to $5.5k$ blocking fractured cells and $22k$ conductive fractured cells for case (c).
\begin{figure}[ht]
\centering
\vspace{1pt}
\includegraphics[width=0.45\textwidth]{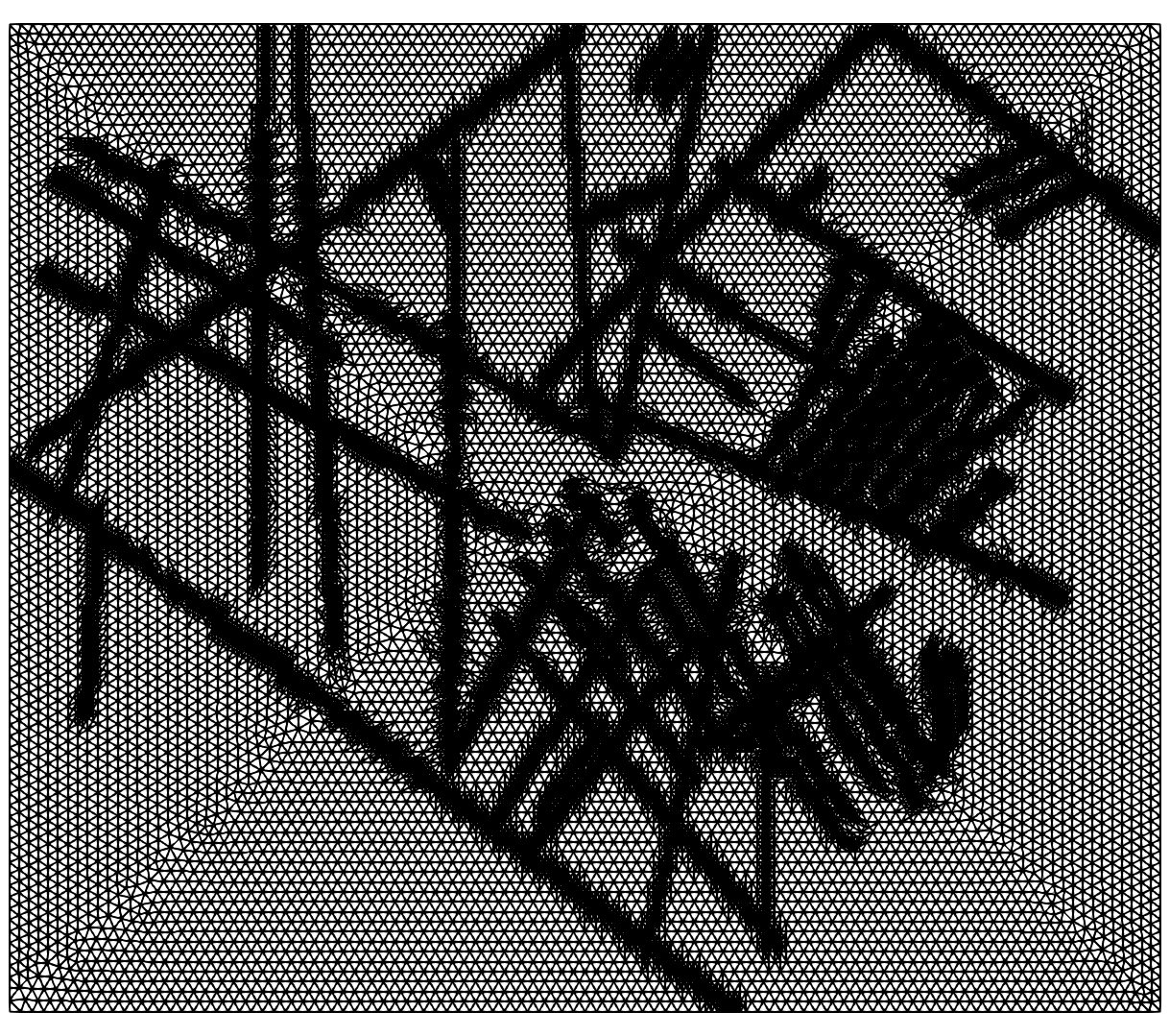}
\includegraphics[width=0.45\textwidth]{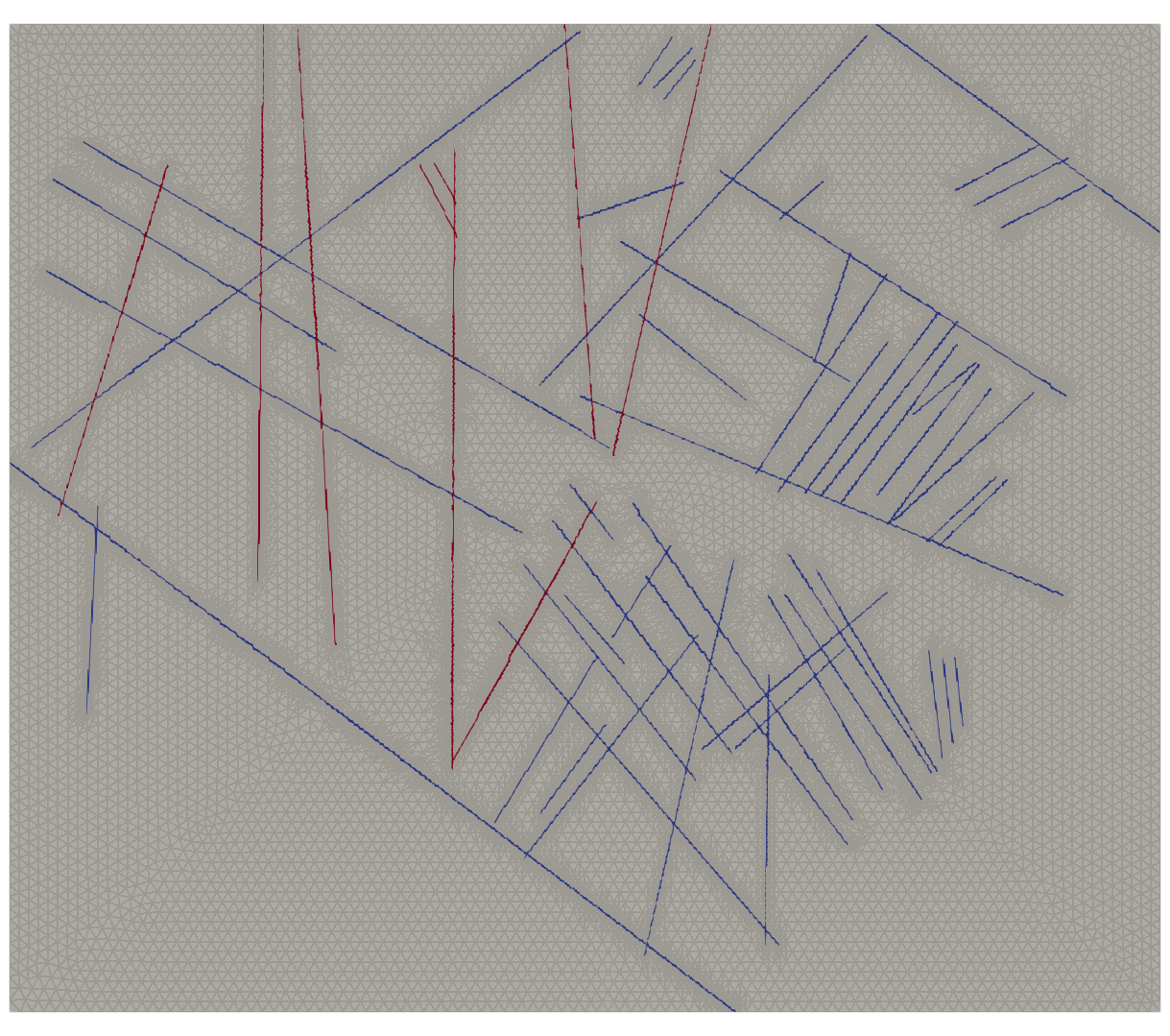}
\vspace{1pt}
\caption{\textbf{Example 3.}
Left: unfitted mesh with 154,174 cells.
Right: location of blocking (in red) and conductive (in blue) fractured cells for case (c).}
\label{fig:ex3-msh}
\end{figure}
The penalty parameters in \eqref{stab} are
given in Table~\ref{tab:1}. 
Here due to stronger conductive/blocking effects (with  permeability differs by six orders of magnitude), we need to choose the penalty parameters $s_b$ and $C_c$ differently than the previous two examples. In particular, we note that the $C_c$ values are tuned to make the case (a) results matching with existing work. Moreover, the stabilization on blocking fractured cells are reduced by taking $s_b=2$, as taking larger stabilization with $s_b=0$ leads to pressure leakage across blocking fractured cells.  
\begin{table}[ht!]
\centering
\begin{tabular}{c|c c c c}
 $k$    & $C_b$ & $s_b$& $C_c$ & $s_c$\\
     \hline
0& 1 &  2     & 6 &  2\\[.1ex]
1& 1 &  2     & 0.08 &  3\\[.1ex]
2& 1 &  2     & 0.16 &  3\\
\end{tabular}
\caption{\textbf{Example 3.} Choice of the penalty parameters for different polynomial degree $k$.}
\label{tab:1}
\end{table}

The pressure approximations along the two lines $y = 5/7$ and $x = 625/700$ are recorded in Figure~\ref{fig:ex3-cut}, where reference data from the {Mortar-DFM} scheme on a fitted mesh with about $10k$ cells reported in \cite{FLEMISCH2018239} for case (a) is also presented.
We observe a good agreement with the reference data for case (a) for our schemes. 
Moreover, we observe very close results for $k=1$ and $k=2$ for case (b) and case (c), where the results for $k=0$ is slightly off due to coarse mesh resolution and low-order approximations.
Further refining the mesh for $k=0$ leads to results closer to the $k=1,2$ cases in Figure~\ref{fig:ex3-cut}.
\begin{figure}[ht]
\centering
\includegraphics[width=0.45\textwidth]{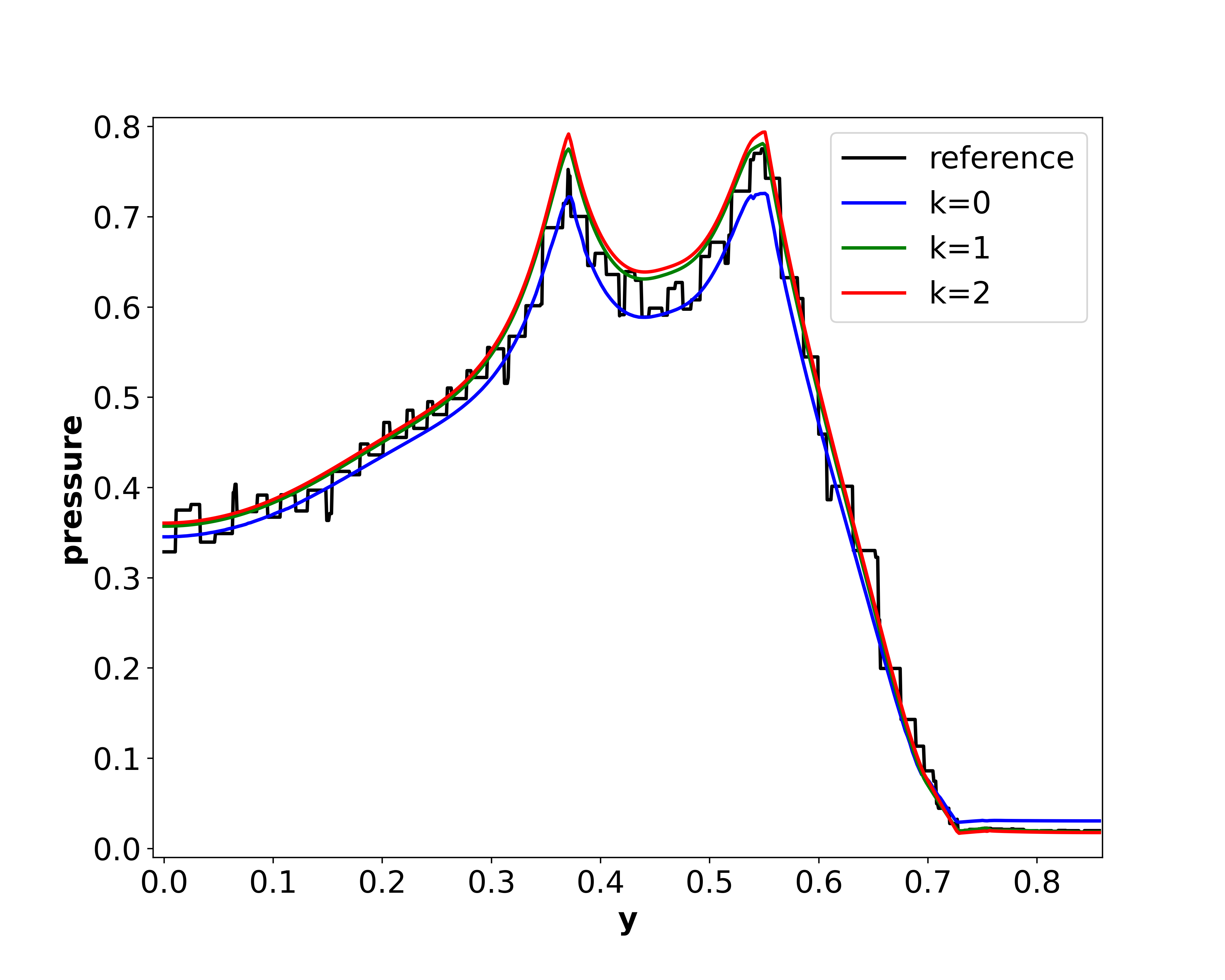}
\includegraphics[width=0.45\textwidth]{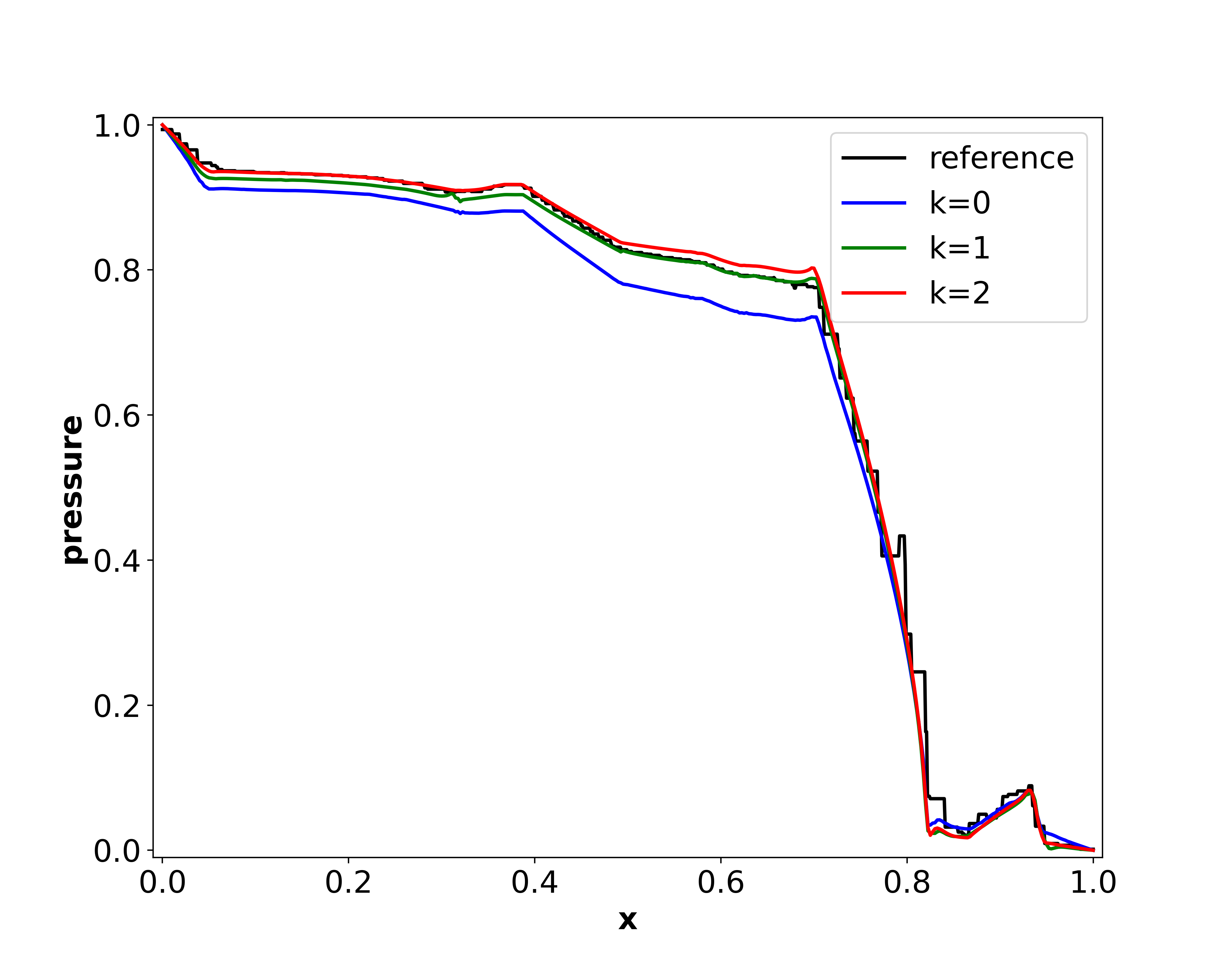}\\
\includegraphics[width=0.45\textwidth]{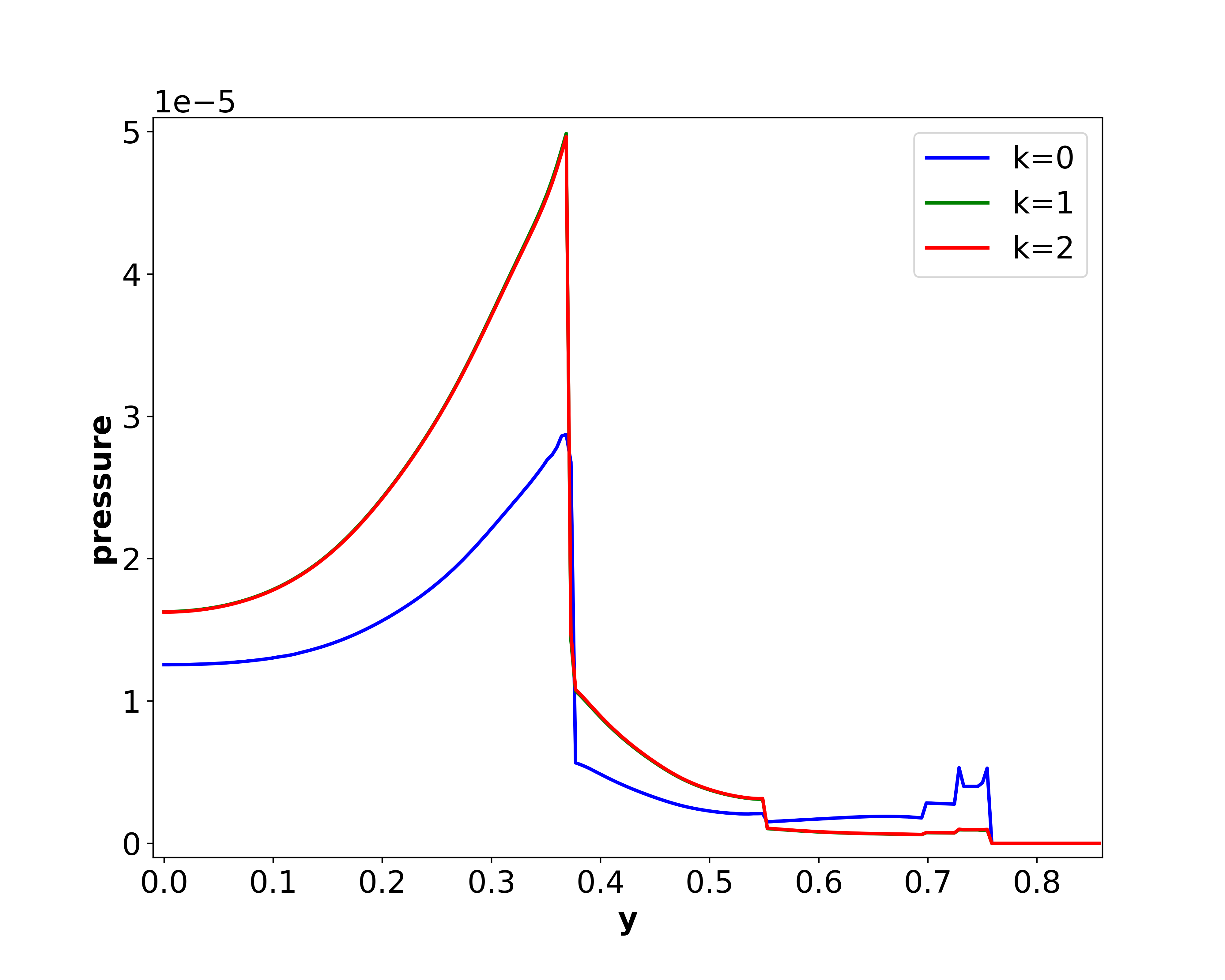}
\includegraphics[width=0.45\textwidth]{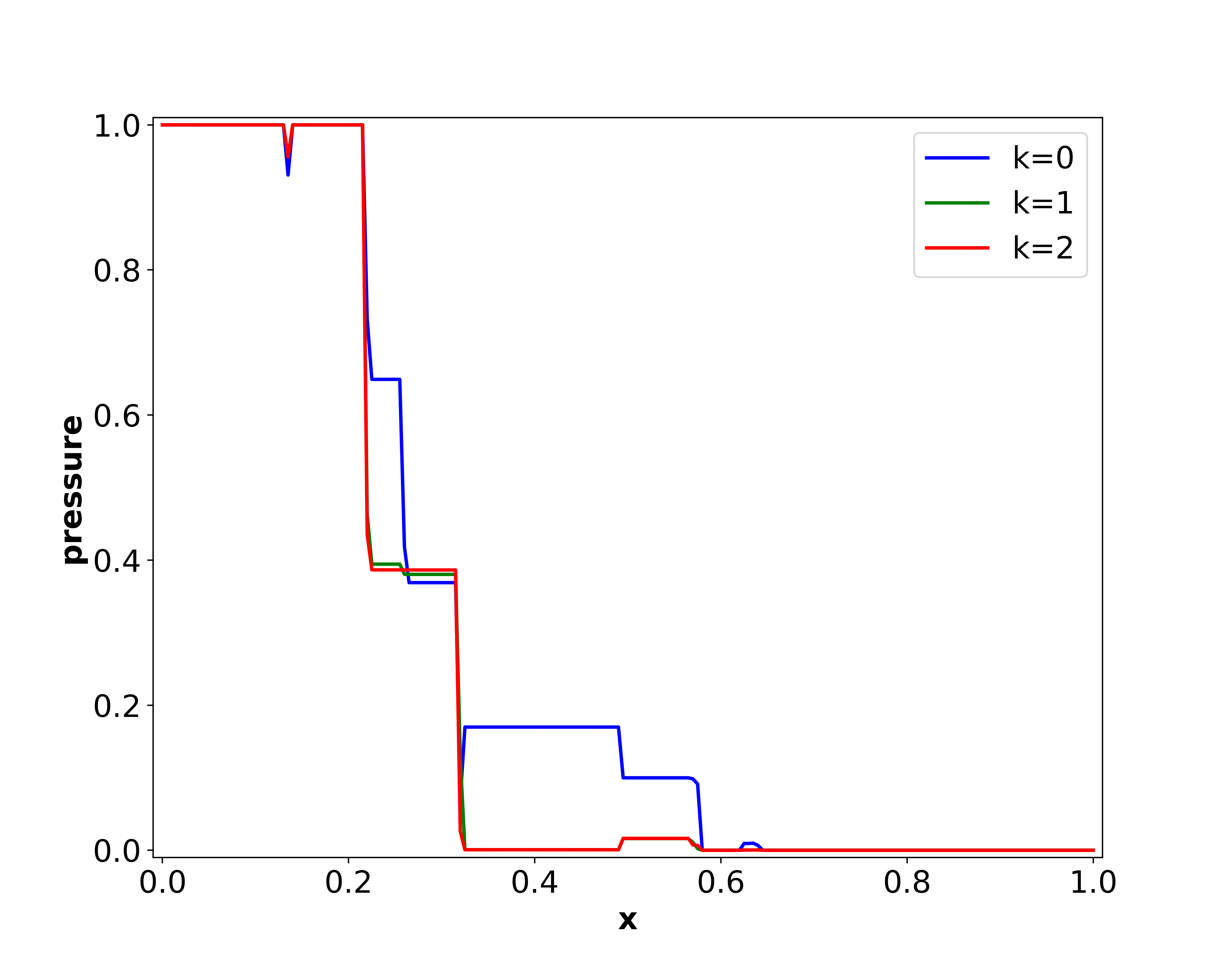}\\
\includegraphics[width=0.45\textwidth]{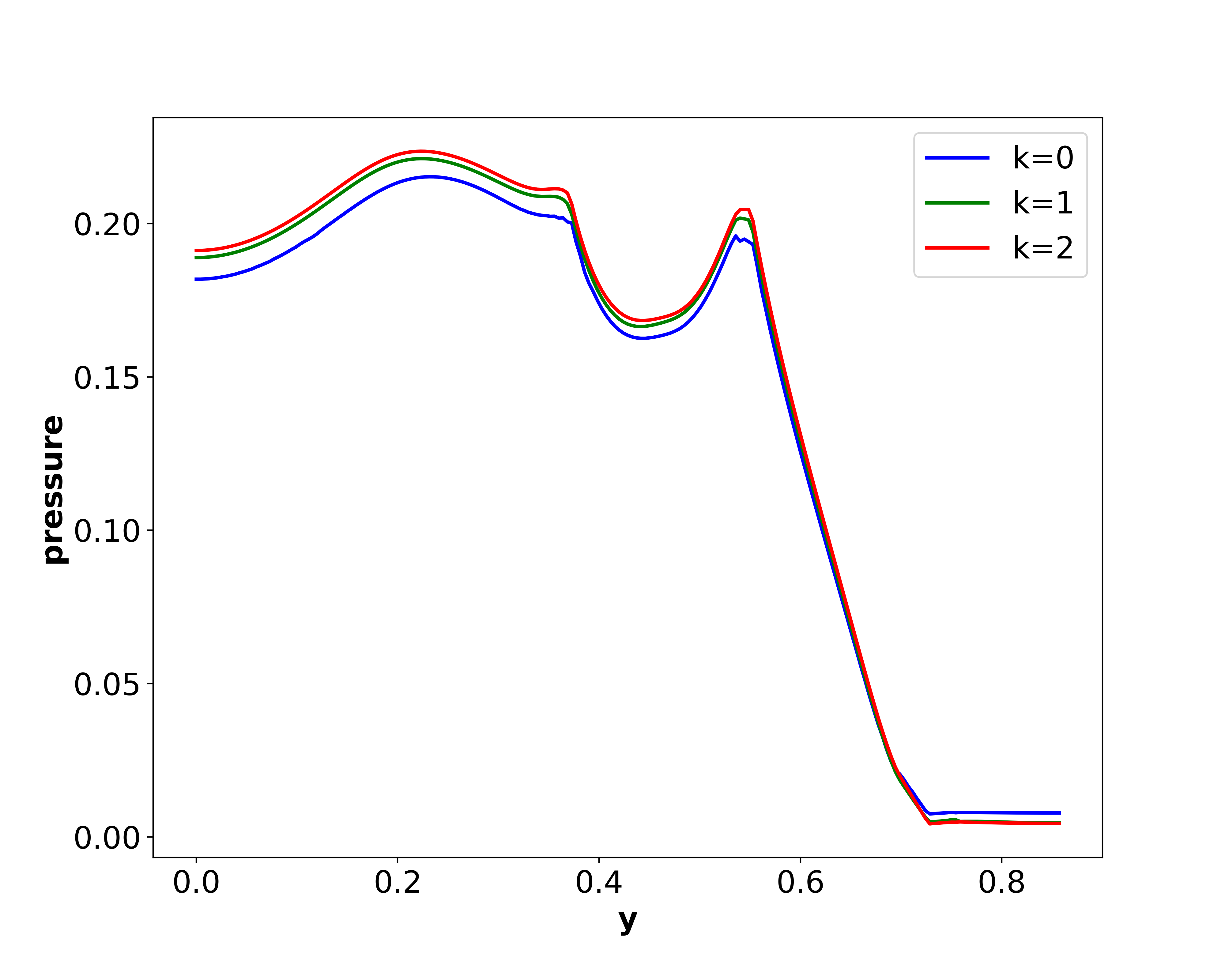}
\includegraphics[width=0.45\textwidth]{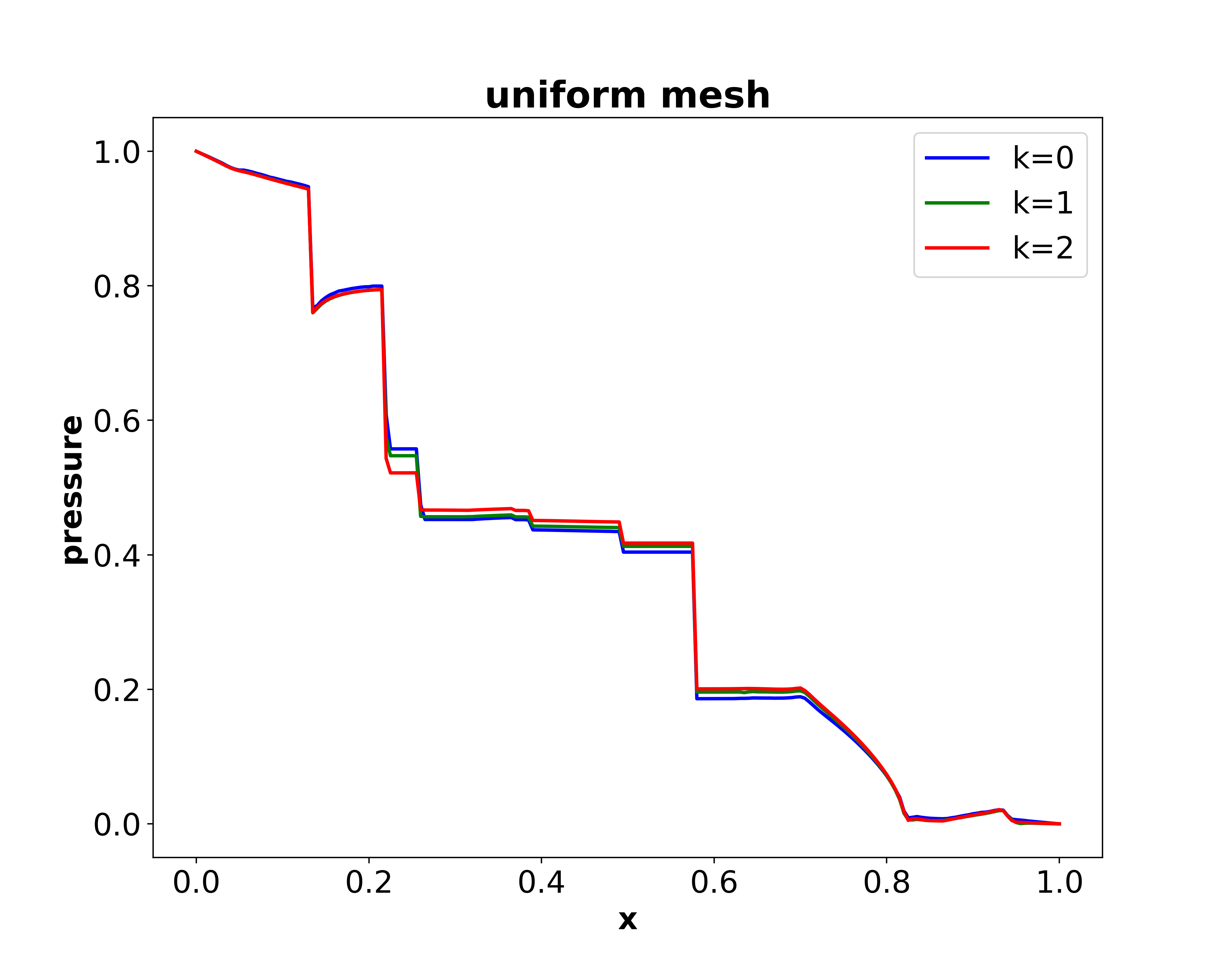}\\
\caption{\textbf{Example 3.} Pressure along line $y=5/7$ (left column) and 
along line $x=625/700$ (right column). 
Here reference data for case (a) is the result for the {\sf Mortar-DFM} scheme in \cite{FLEMISCH2018239}. 
}
\label{fig:ex3-cut}
\end{figure}

Contour plots of the pressure are shown in 
Figure~\ref{fig:ex3-cont}, 
where the case (a) results are again consist with those in the literature \cite{FLEMISCH2018239}. We also clearly observe the blocking effects (with discontinuous pressures) of the fractures for case (b), and the combined conductive/blocking effects of the fractures for case (c). 
This example confirms the ability of the proposed HDG scheme \ref{hdg} in simulating realistic complex fracture networks on unfitted meshes with both conductive and blocking fractures.

\begin{figure}[ht]
\centering
\includegraphics[width=0.31\textwidth]{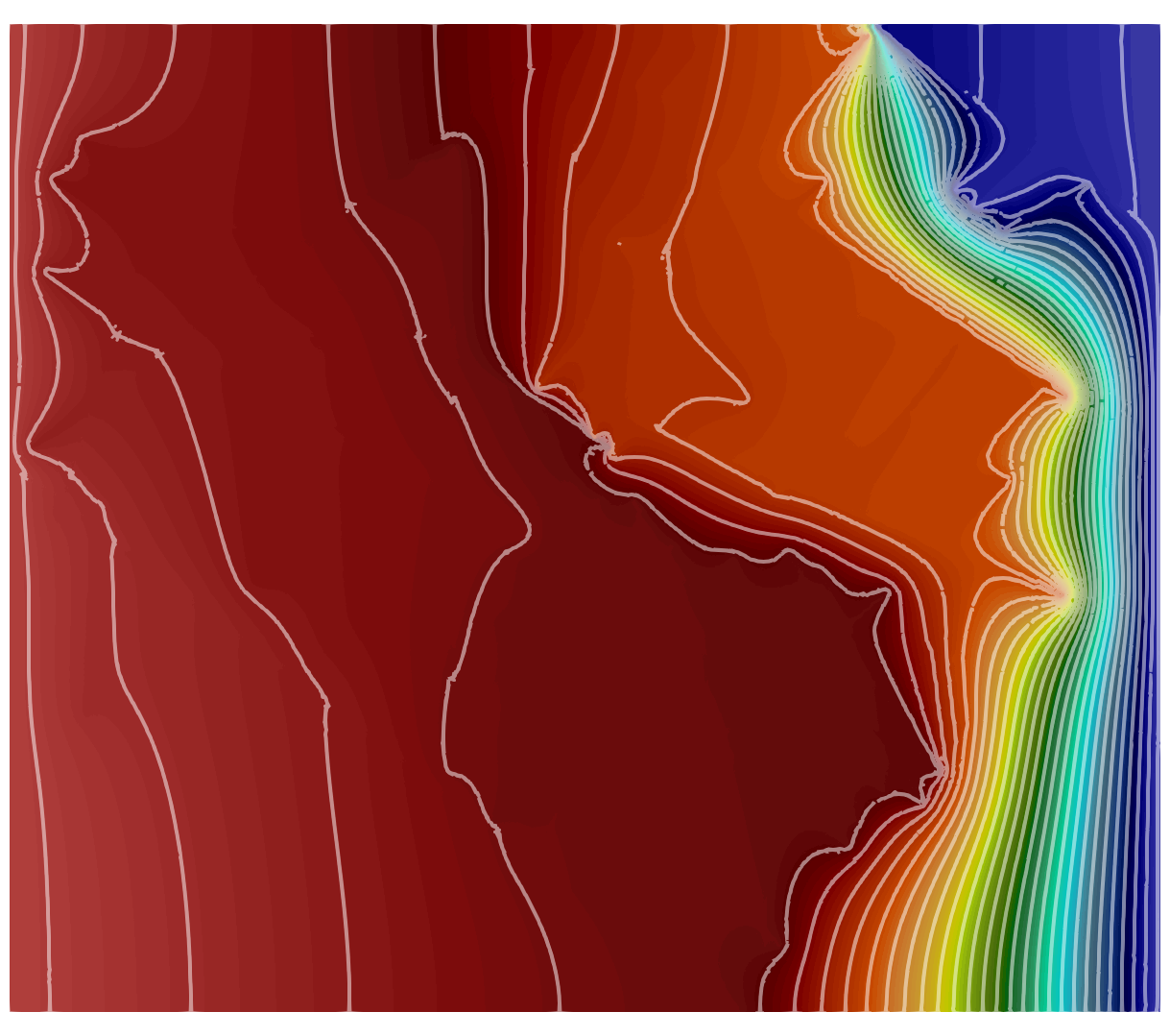}
\includegraphics[width=0.31\textwidth]{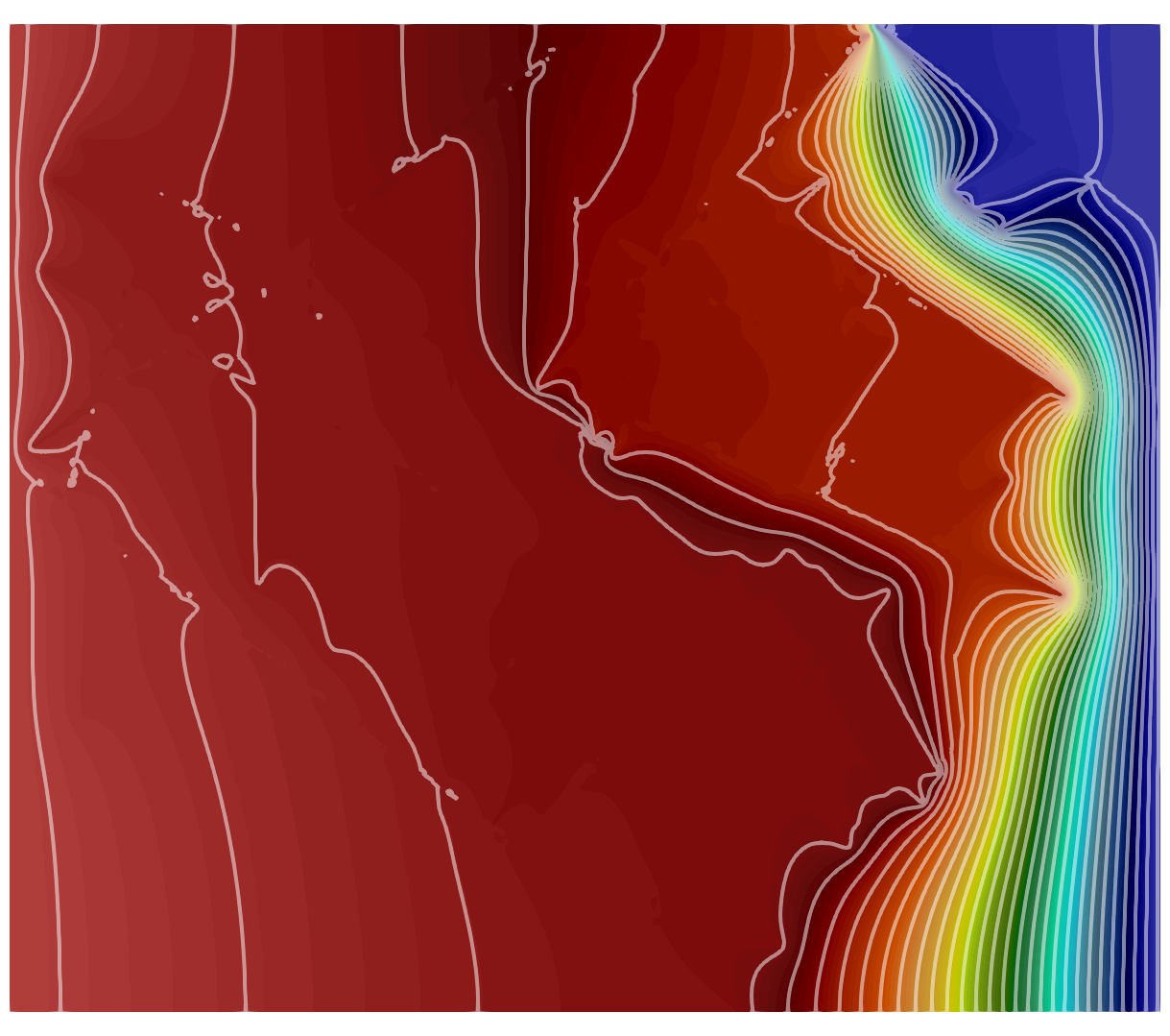}
\includegraphics[width=0.31\textwidth]{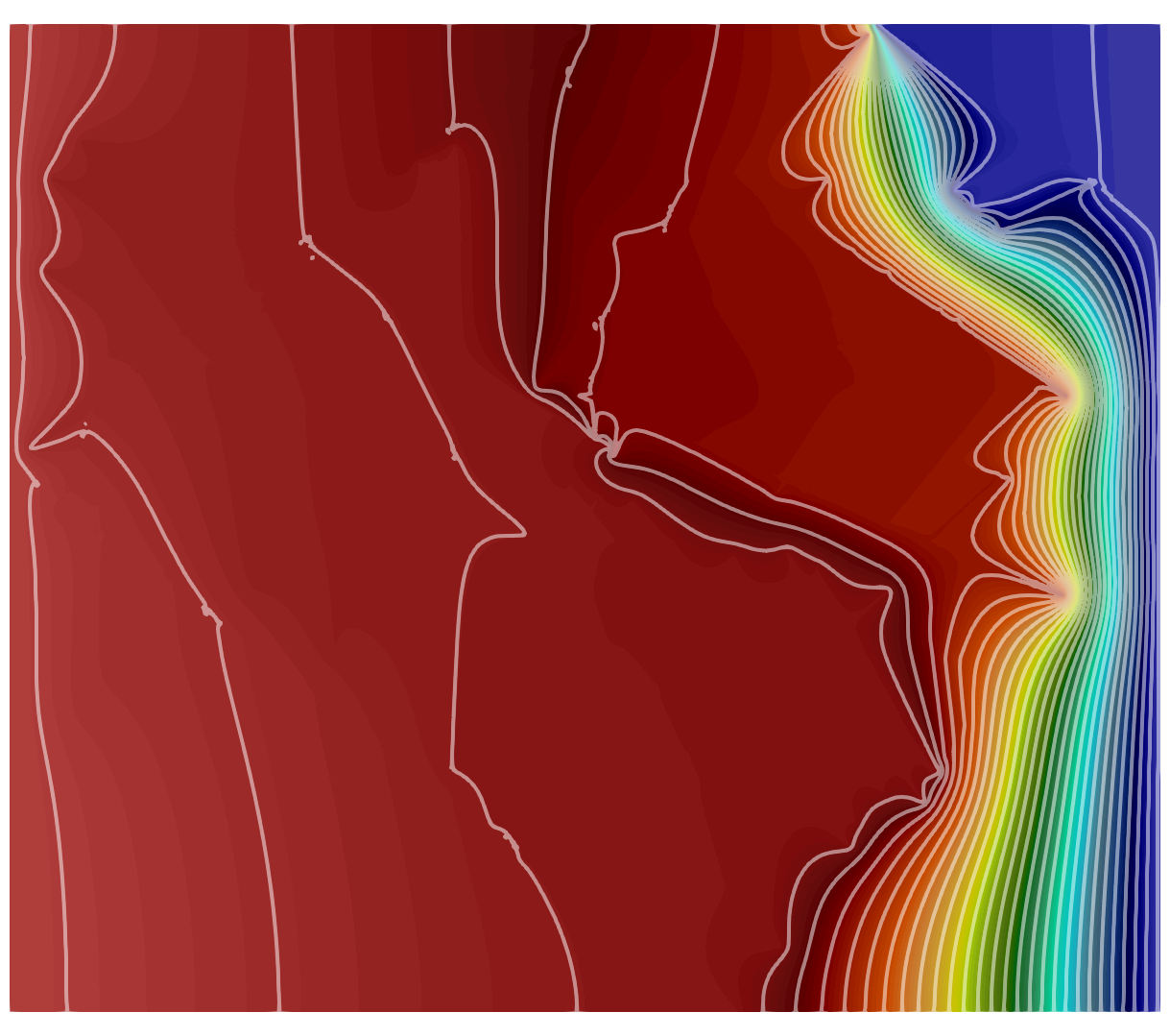}
\includegraphics[width=0.31\textwidth]{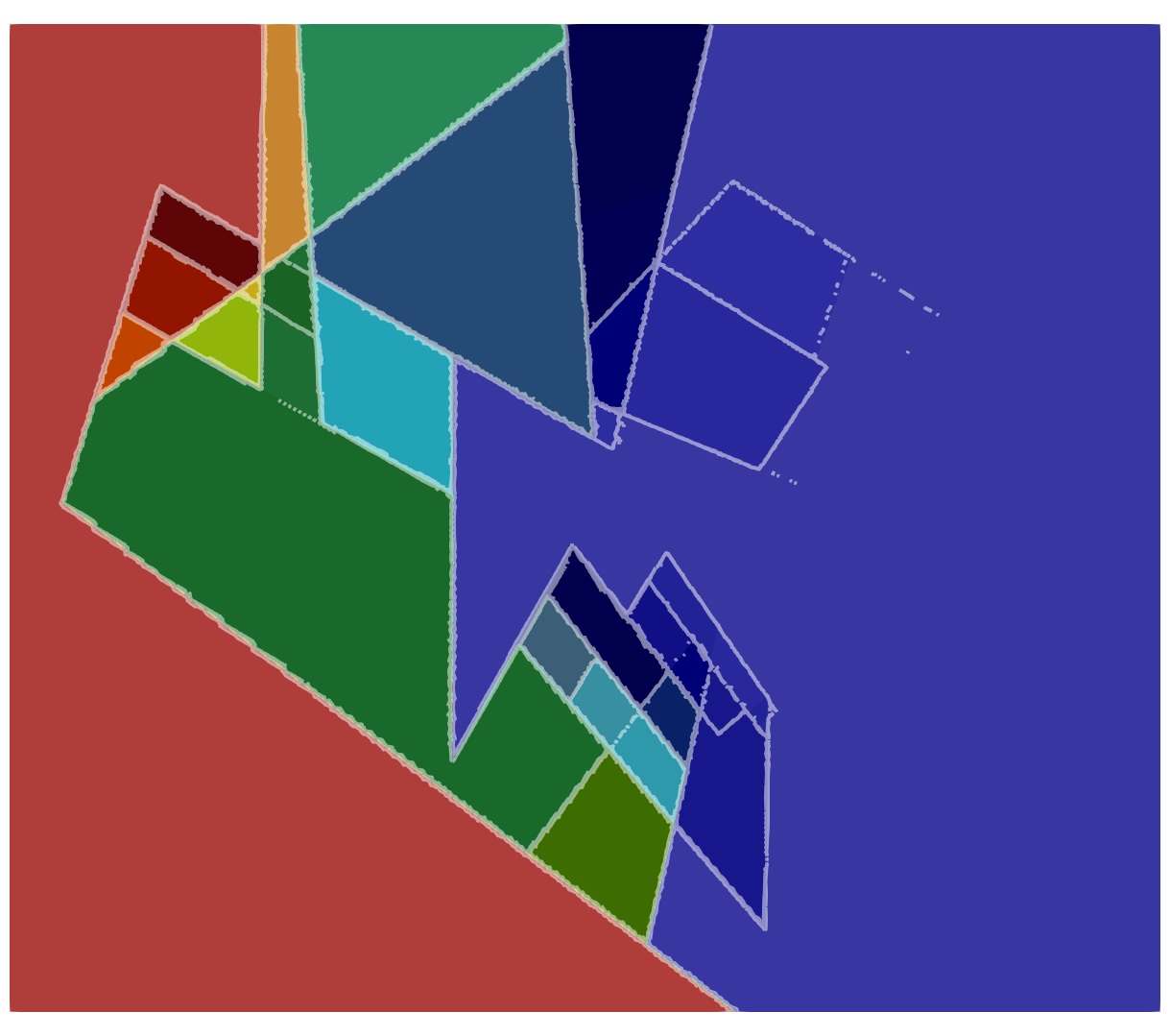}
\includegraphics[width=0.31\textwidth]{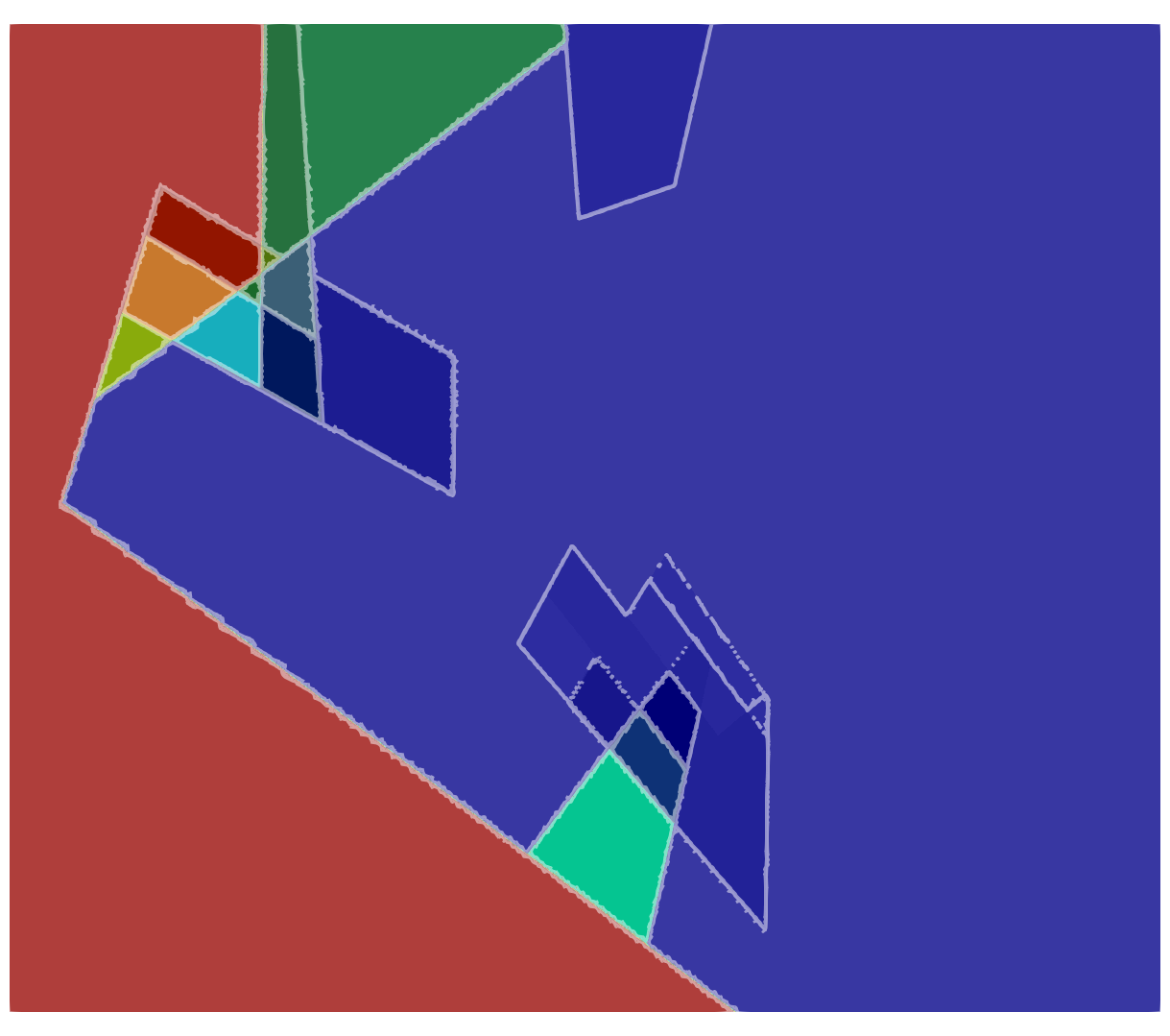}
\includegraphics[width=0.31\textwidth]{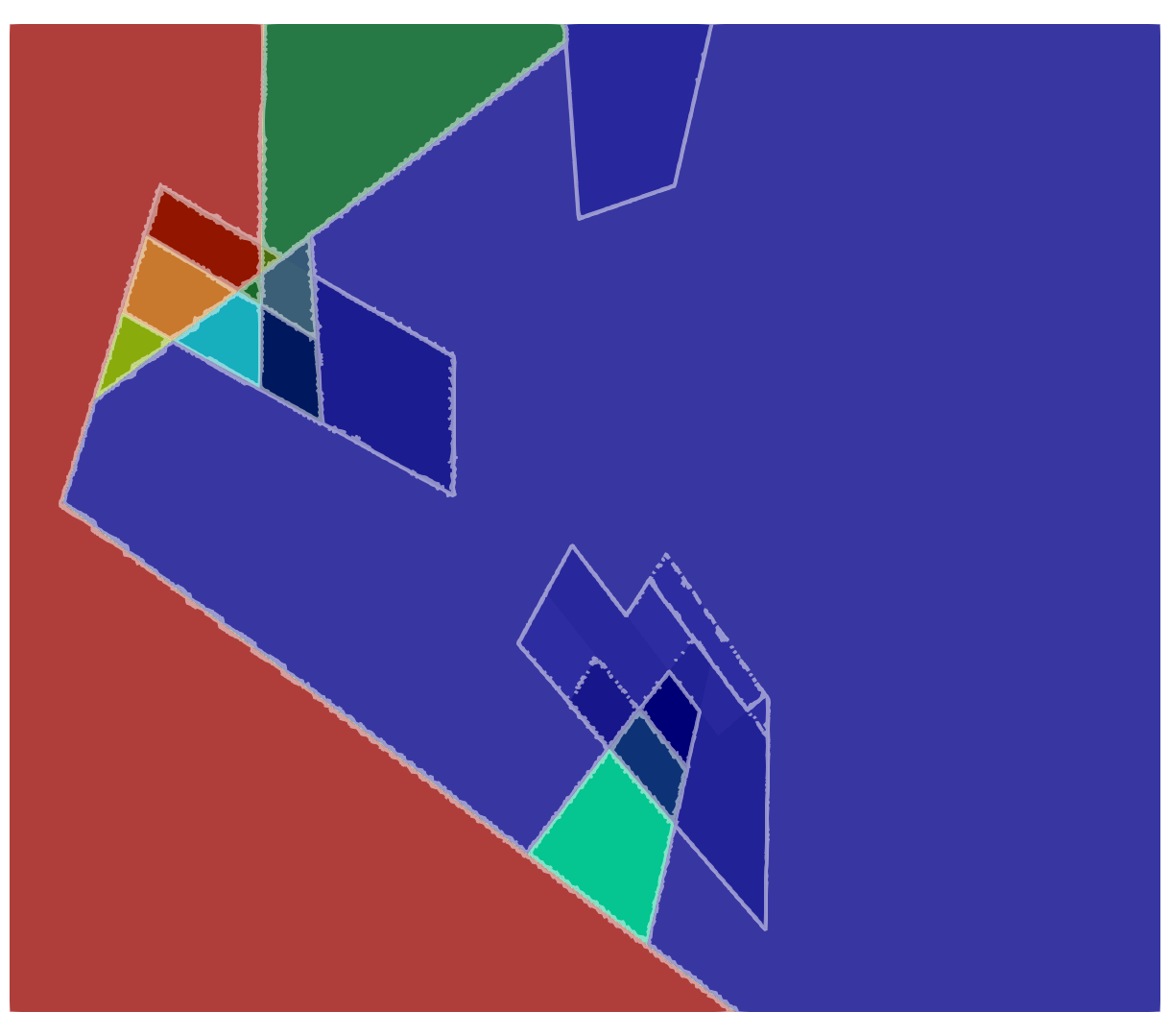}
\includegraphics[width=0.31\textwidth]{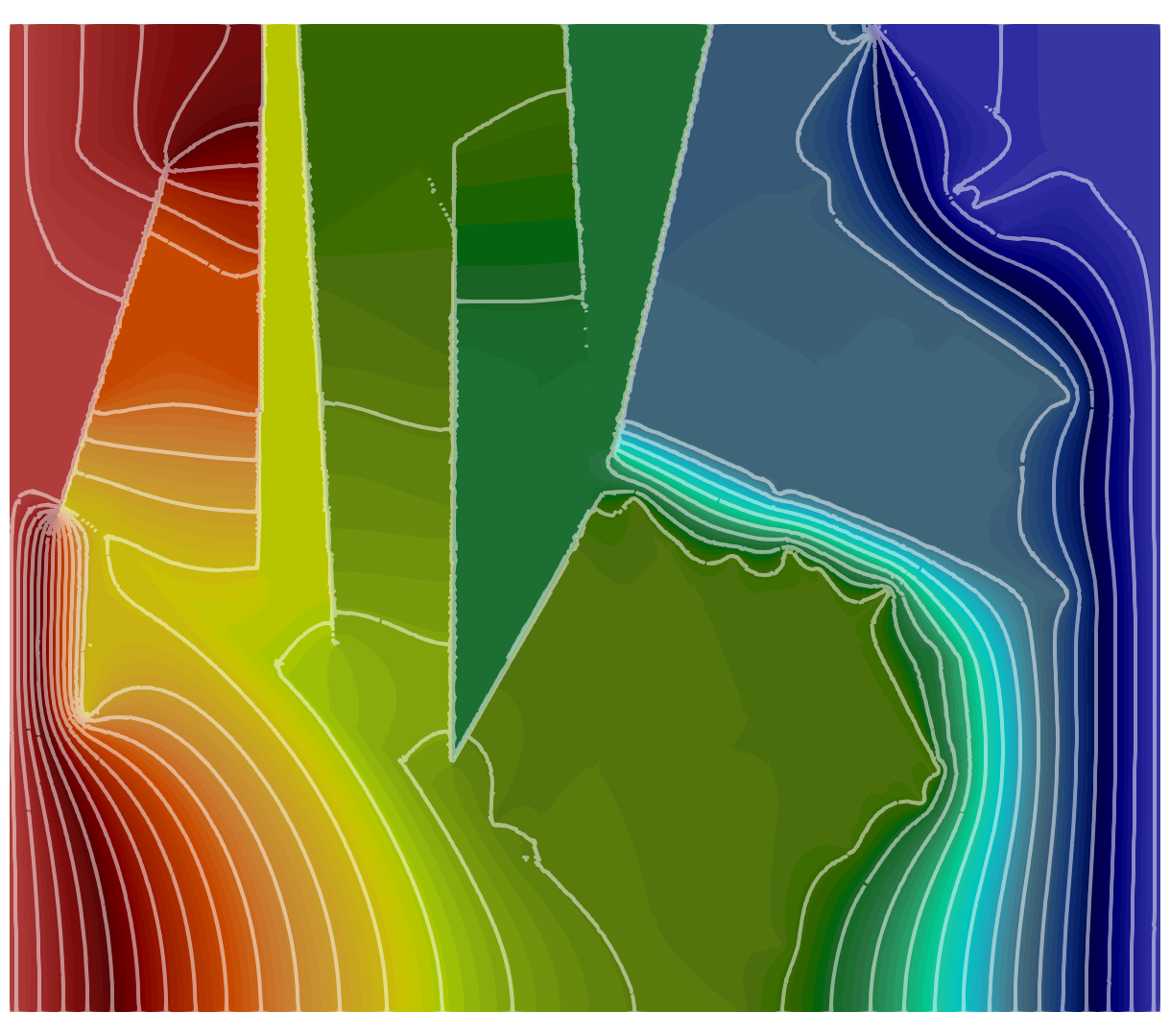}
\includegraphics[width=0.31\textwidth]{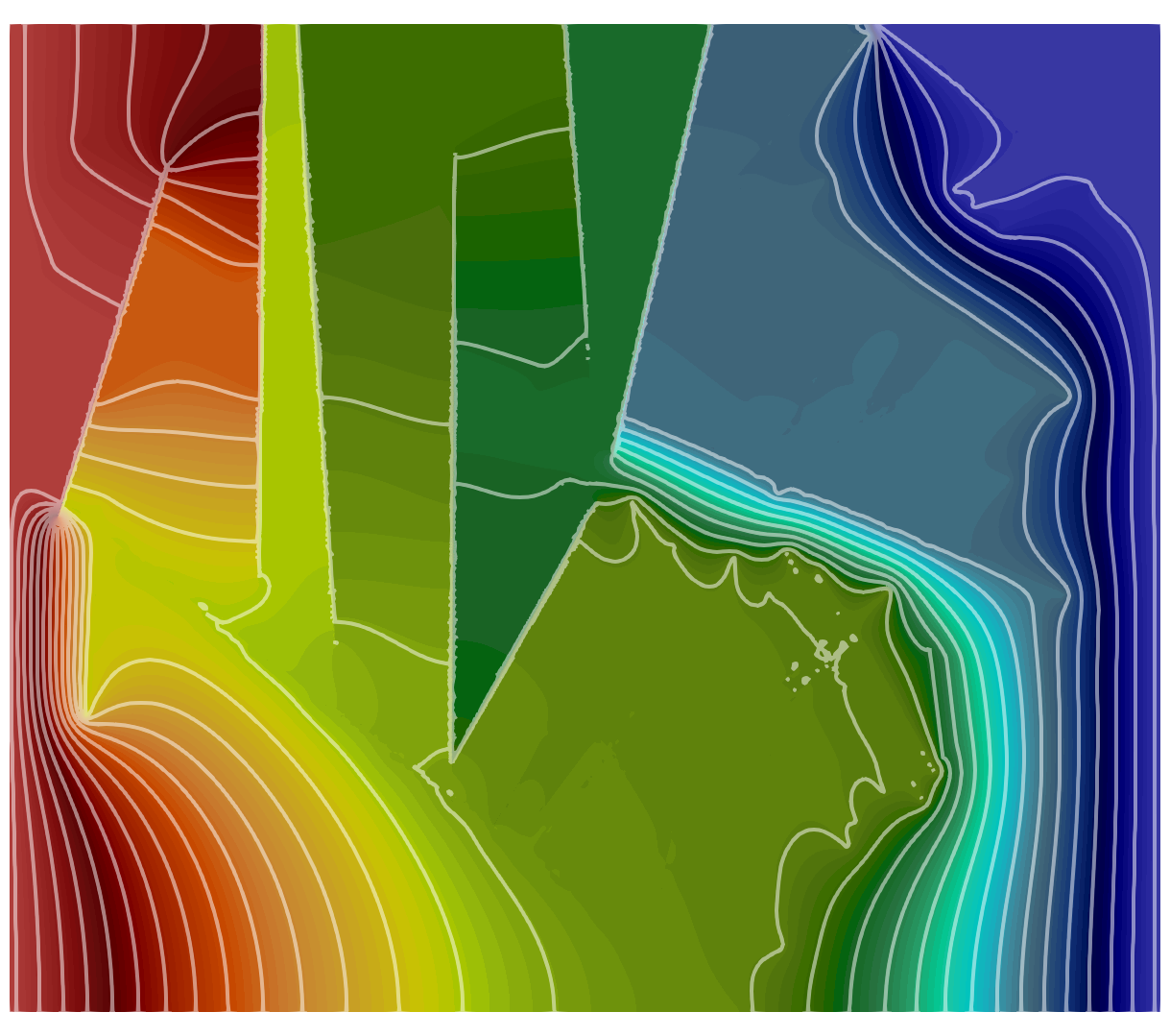}
\includegraphics[width=0.31\textwidth]{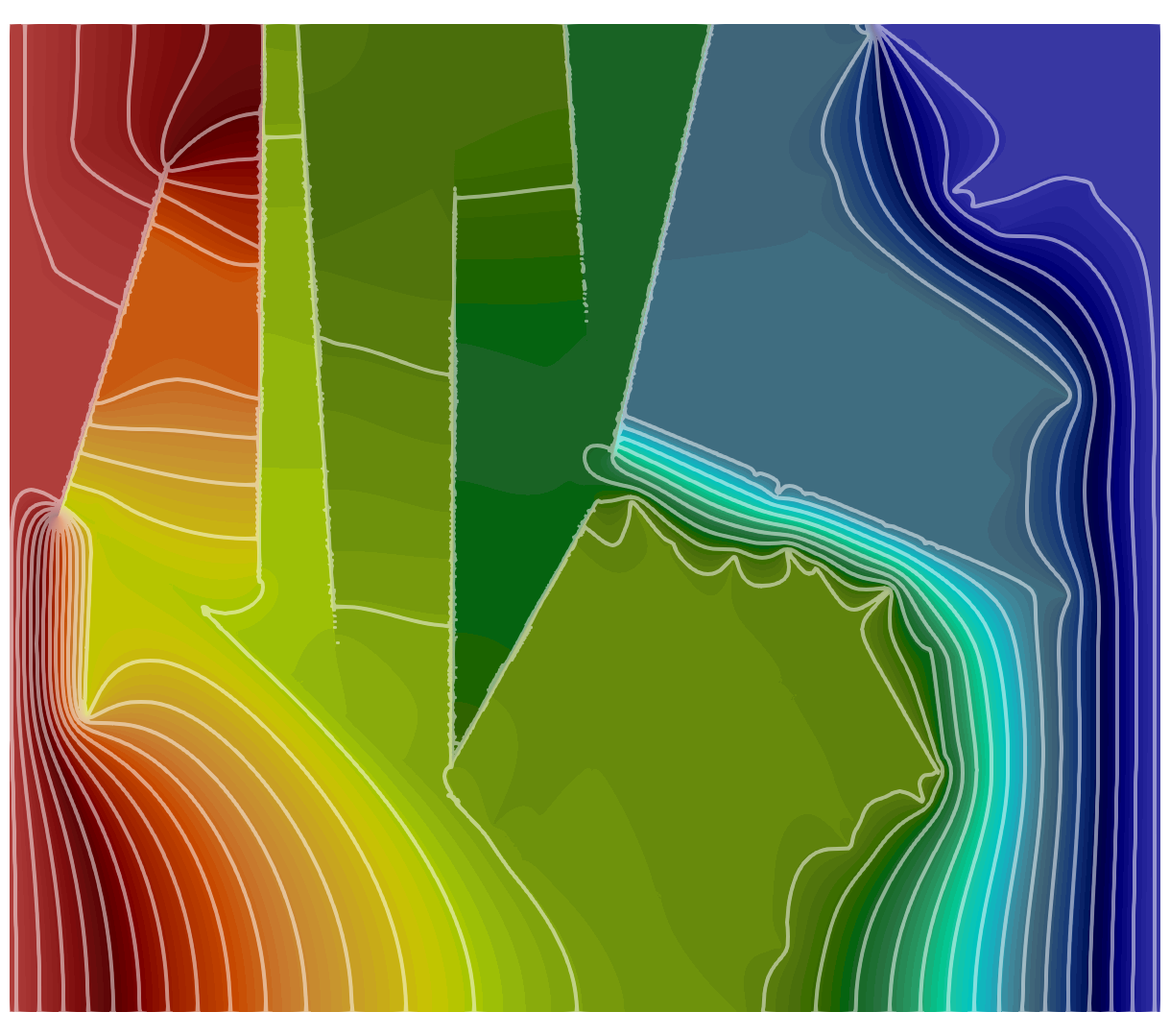}
\caption{\textbf{Example 3.} Pressure contours for $k=0$ (left), $k=1$ (middle), and $k=2$ (right). 
Top row: case (a). Middle row: case (b). Bottom row: case (c).  
Color range: (0,1).
Thirty uniform contour lines 
from 0.01 to 0.99. 
}
\label{fig:ex3-cont}
\end{figure}

\subsection*{Example 4: single fracture in 3D}
This is the first 3D benchmark case proposed in \cite{Berre_2021}.
The matrix domain $\Omega=(0, 100)\times (0,
100)\times (0, 100)$ which is crossed by a conductive planar fracture $\Gamma_1$
connected by the points $(0,0,80)$, $(100,0,20)$, $(100,100,20)$, $(0,100,80)$
with a thickness of  $\epsilon=10^{-2}$.
The matrix permeability is heterogeneous and is taken to be $\bld K_m = 10^{-6}$ when $z\ge 10$ and 
$\bld K_m=10^{-5}$ when $z<10$.
The fracture conductivity is $k_c = 0.1$ so that $\epsilon k_c= 10^{-3}$.
We apply the Dirichlet boundary conditions on the two boundaries
\[
\Gamma_{in}:=\{0\}\times(0,
100)\times(90, 100), 
\Gamma_{out}:=(0, 100)\times\{0\}
\times(0, 10), 
\]
where $p=4$ on $\Gamma_{in}$ and $p=1$ on $\Gamma_{out}$.
No flow boundary conditions is used on the rest of the domain boundary.


In this example, we apply the HDG scheme \eqref{hdg} with degree $k=0, 1,2$ on two uniform hexahedral meshes with mesh size $h=10$ (1000 cubic cells) and $h=5$ (8000 cubic cells)
where we take $s_c=2$ and $C_c=1$ for $k=0$, and 
$s_c=3$ and $C_c=1$ for $k=1,2$
in the stabilization parameters. Here the characteristic length in \eqref{stab} is $L=100$.
We plot in Figure~\ref{fig:ex4-cut} pressure along the diagonal line $(0,0,0)-(100,100,100)$ together with reference data provided in \cite{Berre_2021} which is obtained from
the {\sf USTUTT-MPFA} method therein on a mesh with approximately 1 million matrix elements. Good agreement with reference solution is observed for $k=1$ and $k=2$. 
The results for $k=0$ is slightly off, but it improves as mesh refines.
\begin{figure}[ht]
\centering
\includegraphics[width=0.45\textwidth]{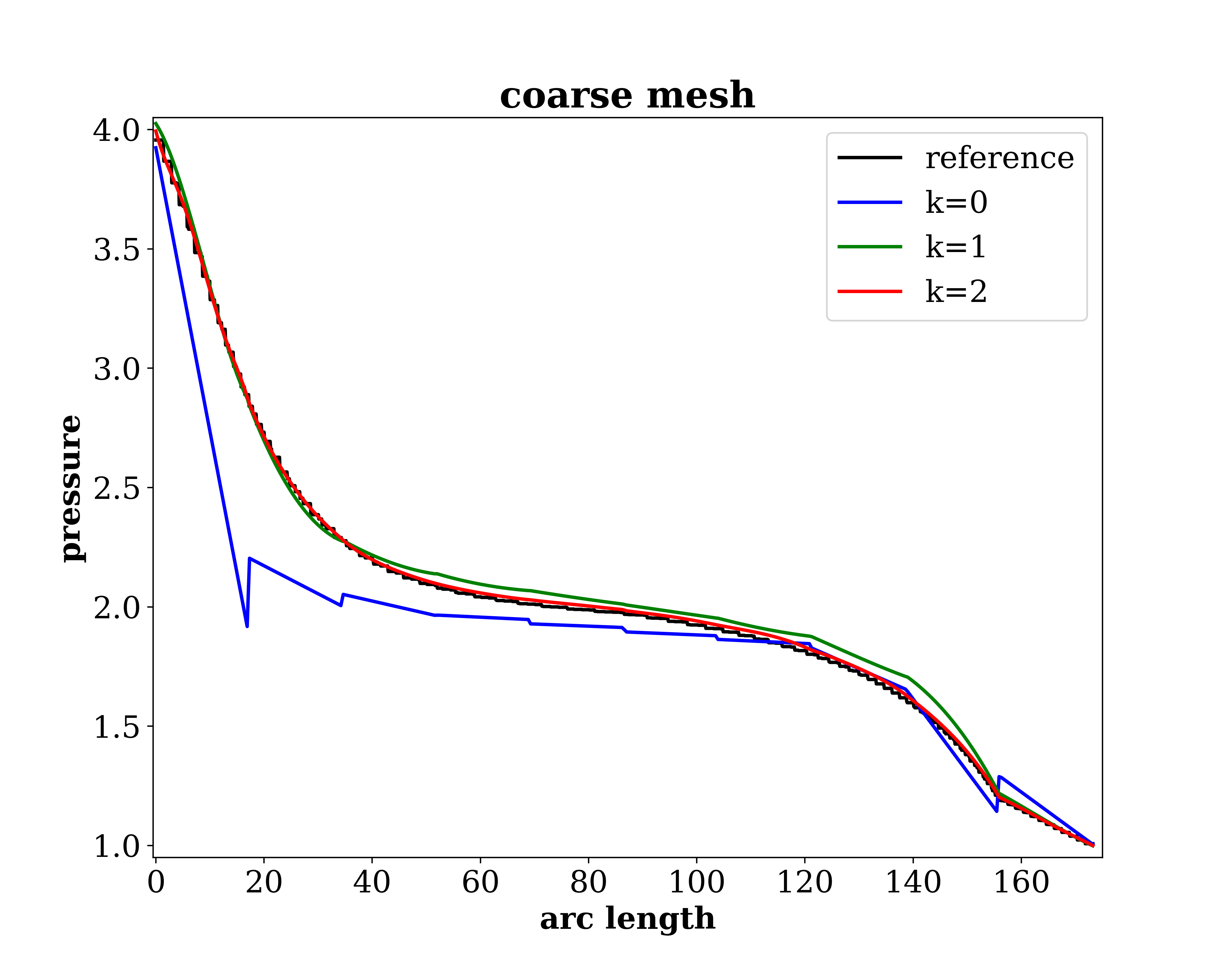}
\includegraphics[width=0.45\textwidth]{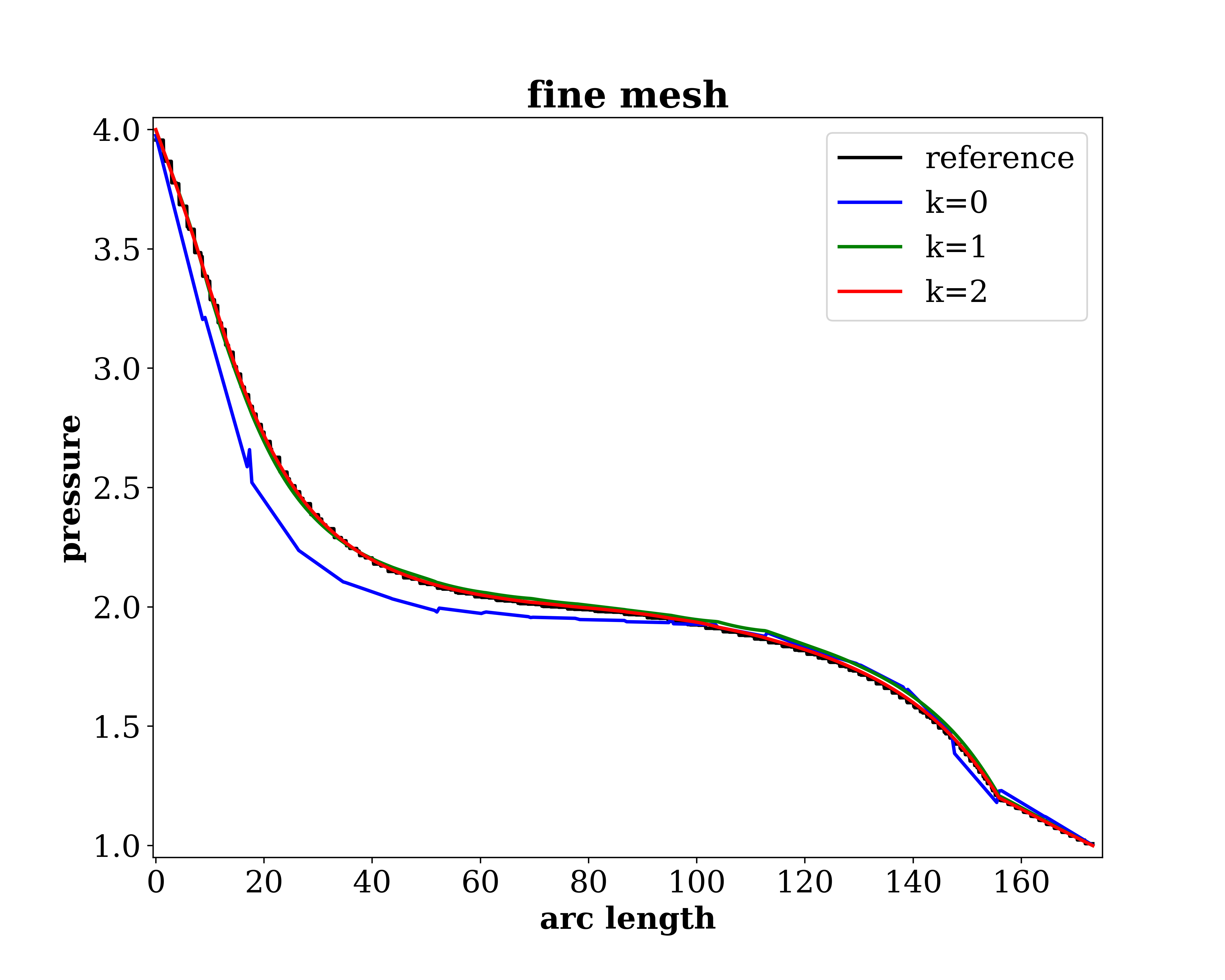}
\caption{\textbf{Example 4.} Pressure along line $(0,0,0)--(100,100,100)$. Here reference data is the result from the {\sf USTUTT-MPFA} scheme in \cite{Berre_2021} on a mesh with roughly 1 million matrix elements. 
}
\label{fig:ex4-cut}
\end{figure}

\subsection*{Example 5: Network with Small Features in 3D}
This is the third benchmark case proposed in \cite{Berre_2021}, in which small geometric features exist.
The domain is the box
$\Omega = (0, 1)\times (0, 2.25)\times
(0, 1)$, containing 8 planer conductive fractures;
see Figure \ref{fig:small}.
  \begin{figure}[ht]
  \centering
    \includegraphics[width=.8\textwidth]{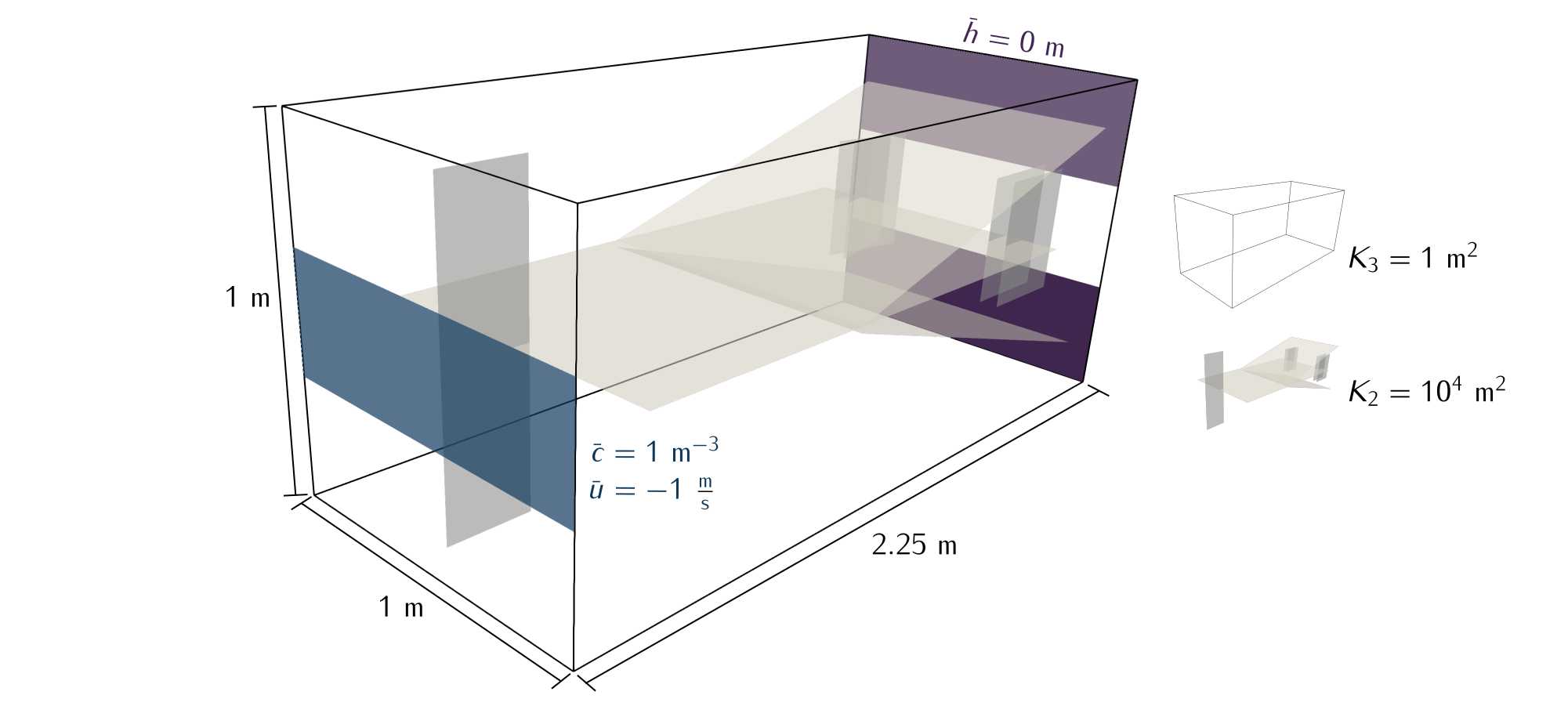}
    \caption{Example 5: Conceptual model and geometrical description
    of the domain.}
    \label{fig:small}
  \end{figure}
Homogeneous Dirichlet boundary condition is imposed on
the outlet boundary
\[
  \partial \Omega_{out}:=\{
    (x,y,z): 0<x<1, y = 2.25, z < 1/3 or z > 2/3\},
\]
inflow boundary condition $\bld u\cdot\bld n = -1$ is imposed
on the inlet boundary
\[
  \partial \Omega_{in}:=\{
    (x,y,z): 0<x<1, y = 0, 1/3<z < 2/3\},
\]
and no-flow boundary condition is imposed on the remaining boundaries.
The permeability in the matrix is $\mathbb K_m=1$, and that in the fracture is $k_c=10^4$. Fracture thickness is
$\epsilon=0.01$. 
The locations of these 8 fractures can be found in the 
git repository
\url{https://git.iws.uni-stuttgart.de/benchmarks/fracture-flow-3d} where a sample gmsh geometric file was also  provided.
This problem is very challenging due to the small intersections among the fractures exist.
The reported works in \cite{Berre_2021}
showed large discrepancies among the 16 participating methods. 

Here we run simulations for the scheme \eqref{hdg} with 
$k=0,1,2$ on two meshes: a fitted mesh with about 148k tetrahedral cells obtained from the above mentioned gmsh file with maximal mesh size $h\approx 0.074$ and minimal mesh size $h\approx 0.01$, 
and an unfitted mesh with about 132k tetrahedral cells obtained from local mesh refinements of a coarse uniform mesh near the fractures with maximal mesh size $h\approx 0.26$ and minimal mesh size $h\approx 0.026$.
The penalty parameters for polynomial degree $k=0,1,2$ on these two meshes are shown in Table~\ref{tab:2} below, where the characteristic length $L=2.25$.
\begin{table}[ht!]
\centering
\begin{tabular}{c|c c |c c}
& \multicolumn{2}{c|}{fitted mesh} &  
\multicolumn{2}{c}{unfitted mesh} \\\hline
 $k$    &  $C_c$ & $s_c$&  $C_c$ & $s_c$\\\hline
0 & 5 &  2  & 2.5 &  2\\[.1ex]
1 & 0.7  &  3  & 0.7 &  3\\[.1ex]
2 & 3.5 &  3  & 0.7 &  3\\[.2ex]
\end{tabular}
\caption{\textbf{Example 5.} Choice of the penalty parameters for different polynomial degree and meshes.}
\label{tab:2}
\end{table}
We first show the pressure contours along the plane $z=0.7$ which intersects with 6 fractures in Figure~\ref{fig:ex5-cont}, from which we observe 
large variations among the results on the two meshes, especially for $y\ge 1$ where the 
 fractures near the outlet starts to interact with the flow. 
This observation is in line with the findings in \cite{Berre_2021} where 
 significant difference among participating methods were reported for the pressure distribution along the center line 
$(0.5,1.1,0)-(0.5,1.1, 1.0)$, suggesting that the small features of the fracture network geometry may be not adequately resolved on these meshes.
We also plot the pressure distribution along this center line in Figure~\ref{fig:ex5-cut}, where the shaded region depicts the area between the 10th and the 90th percentile of the published results in \cite{Berre_2021} on similar meshes about $150k$ cells.
It is observed from this figure that our results on both meshes are within the range of the published results in \cite{Berre_2021}, 
and that the results on the fitted mesh may be more accurate as it is closer to the reference solution from {\sf USTUTT-MPFA} on a mesh with roughly 1 million cells.


\begin{figure}[ht]
\centering
\includegraphics[width=0.47\textwidth]{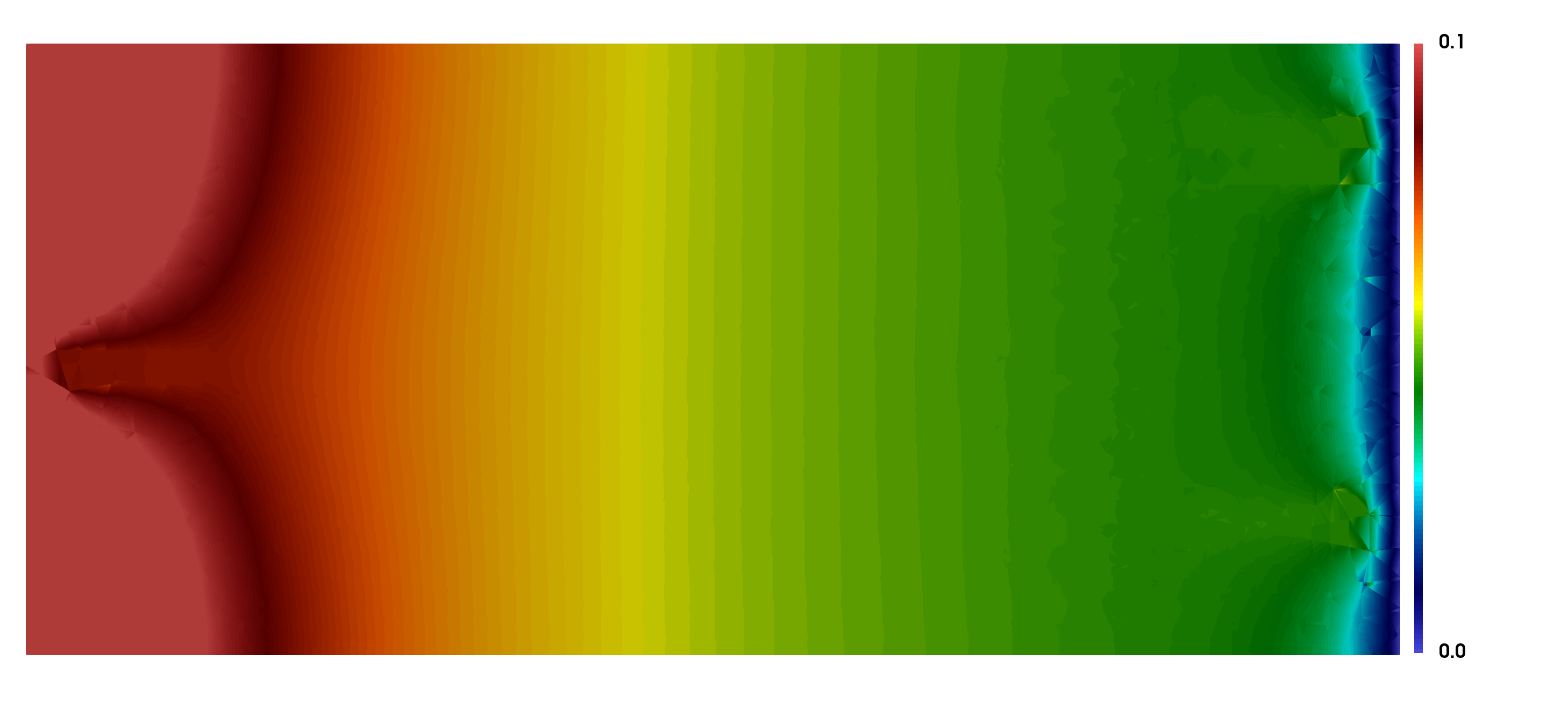}
\includegraphics[width=0.47\textwidth]{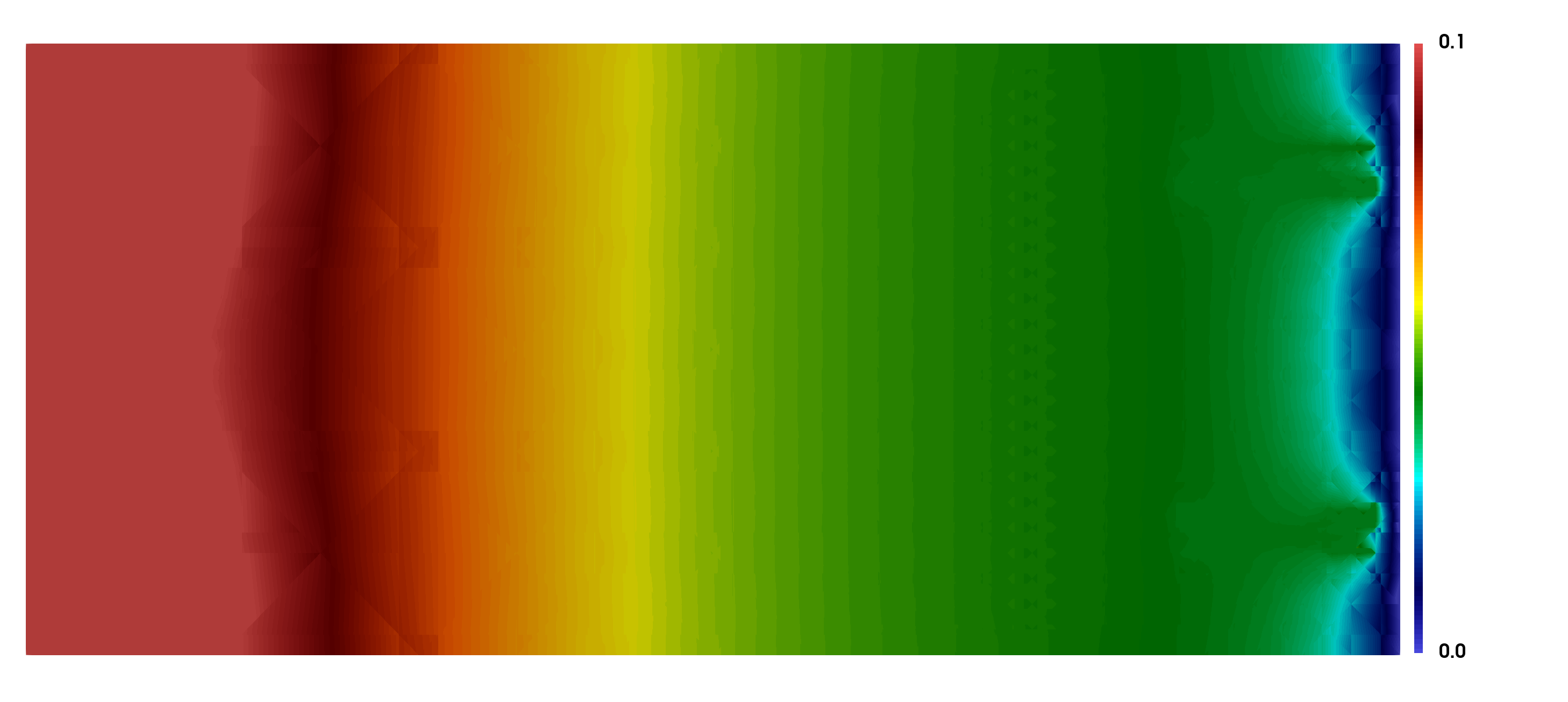}
\includegraphics[width=0.47\textwidth]{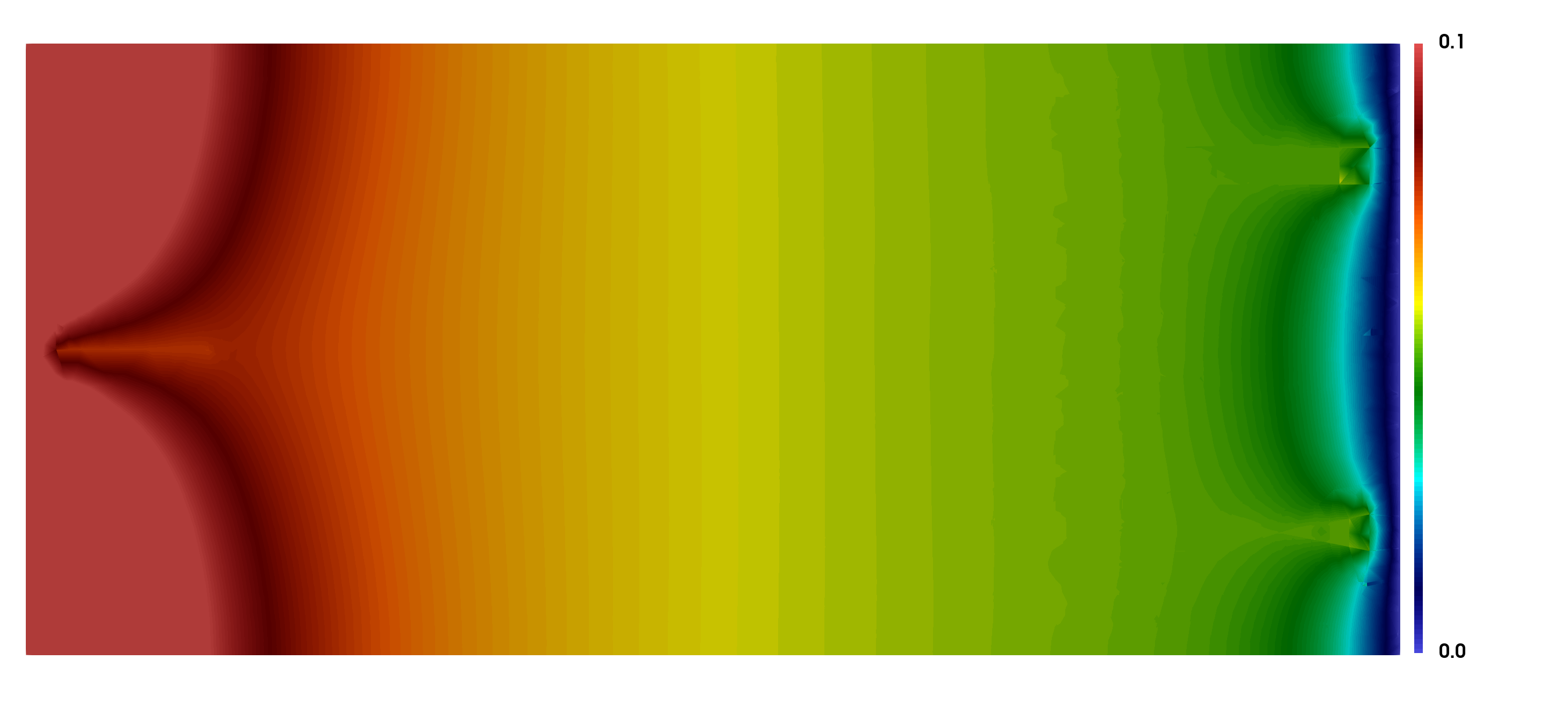}
\includegraphics[width=0.47\textwidth]{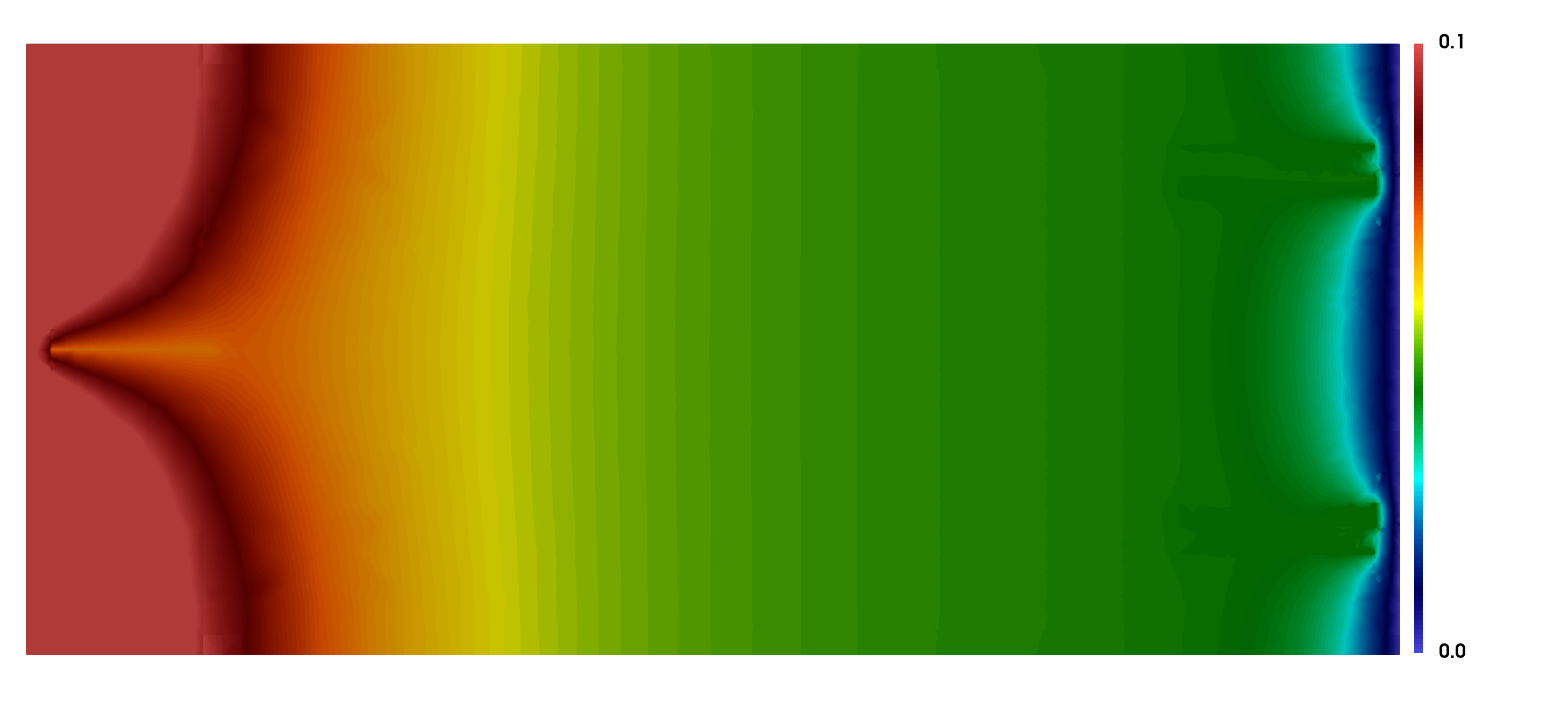}
\includegraphics[width=0.47\textwidth]{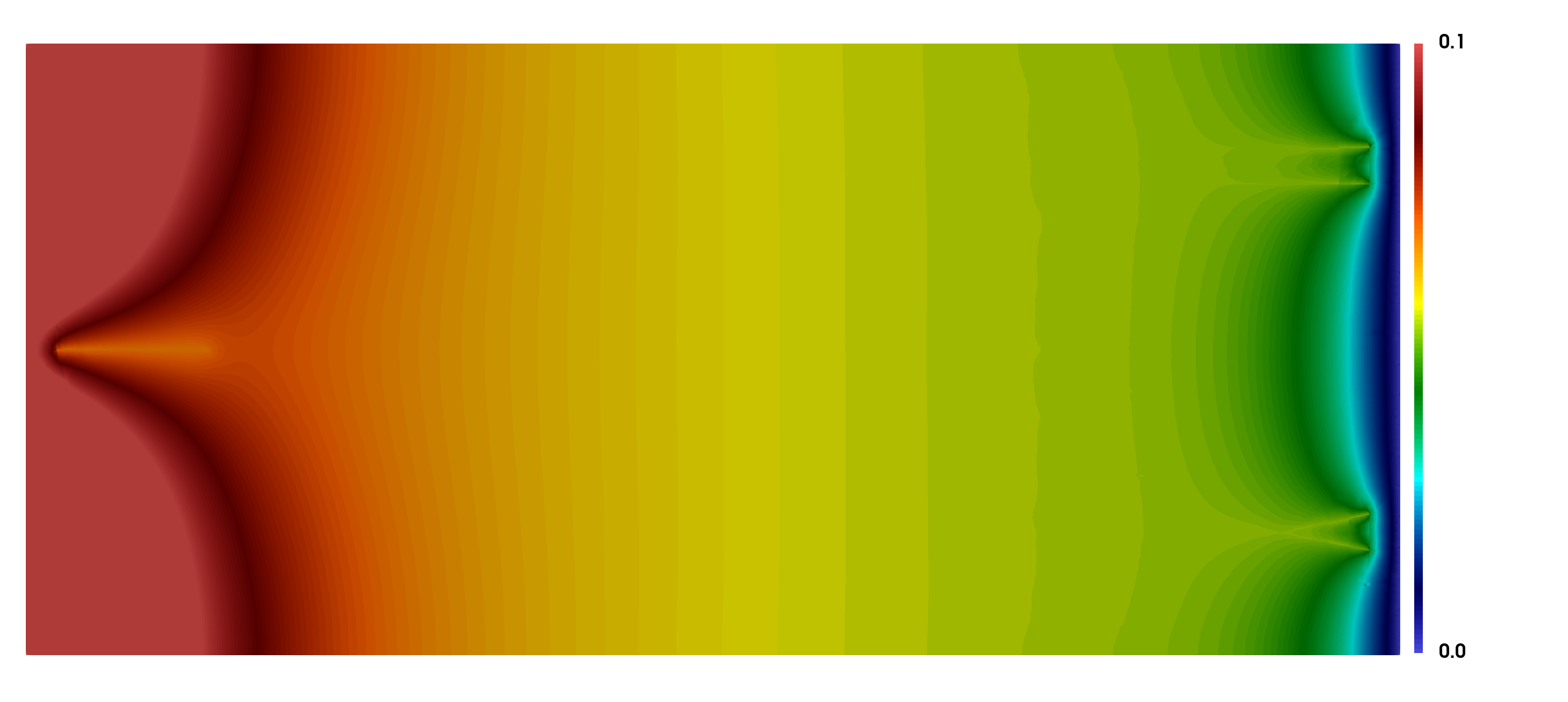}
\includegraphics[width=0.47\textwidth]{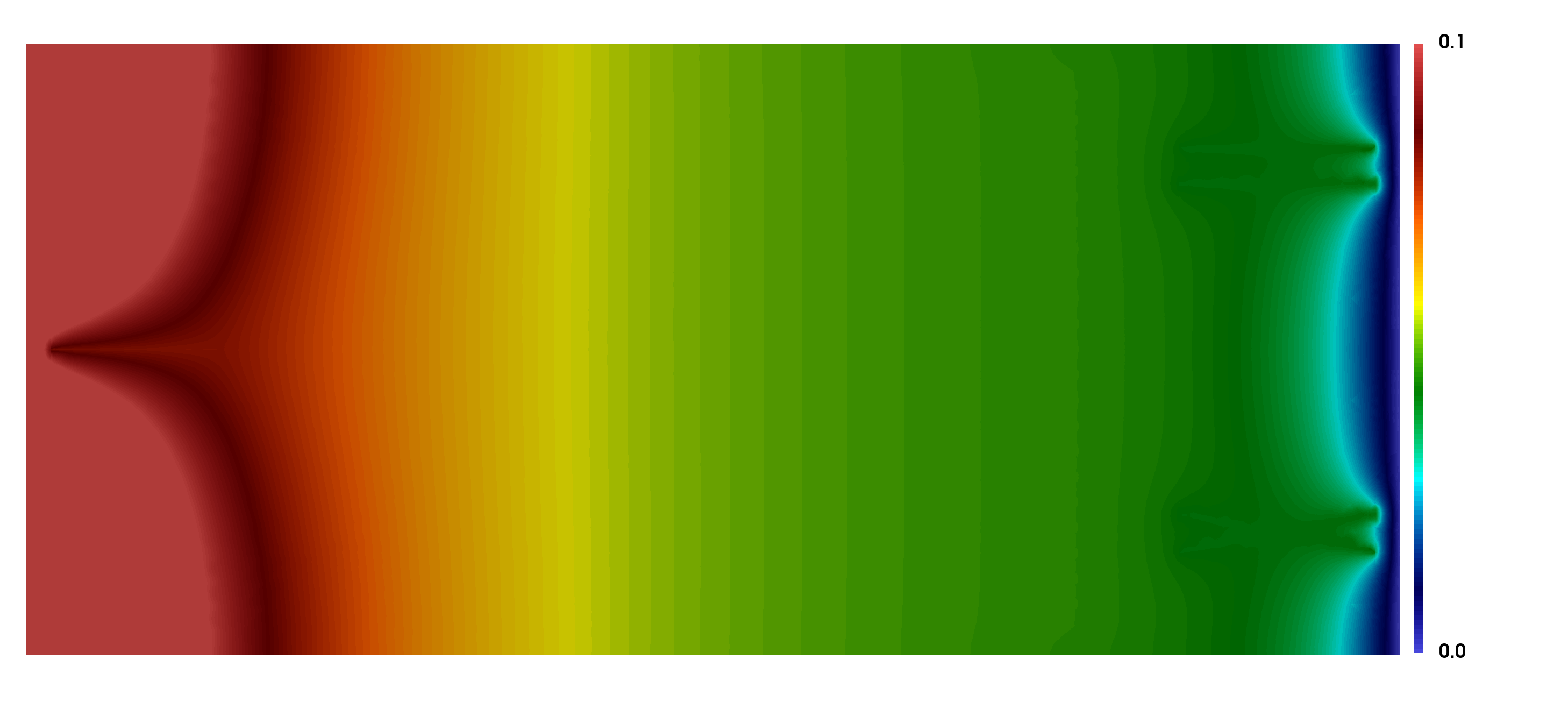}
\caption{\textbf{Example 5.} Pressure contour 
on the plane $z=0.7$ for $k=0$ (top), 
$k=1$ (middle), and $k=2$ (bottom).
Left: fitted mesh (\~148k cells). Right: unfitted mesh (\~132k cells).
}
\label{fig:ex5-cont}
\end{figure}

\begin{figure}[ht]
\centering
\includegraphics[width=0.45\textwidth]{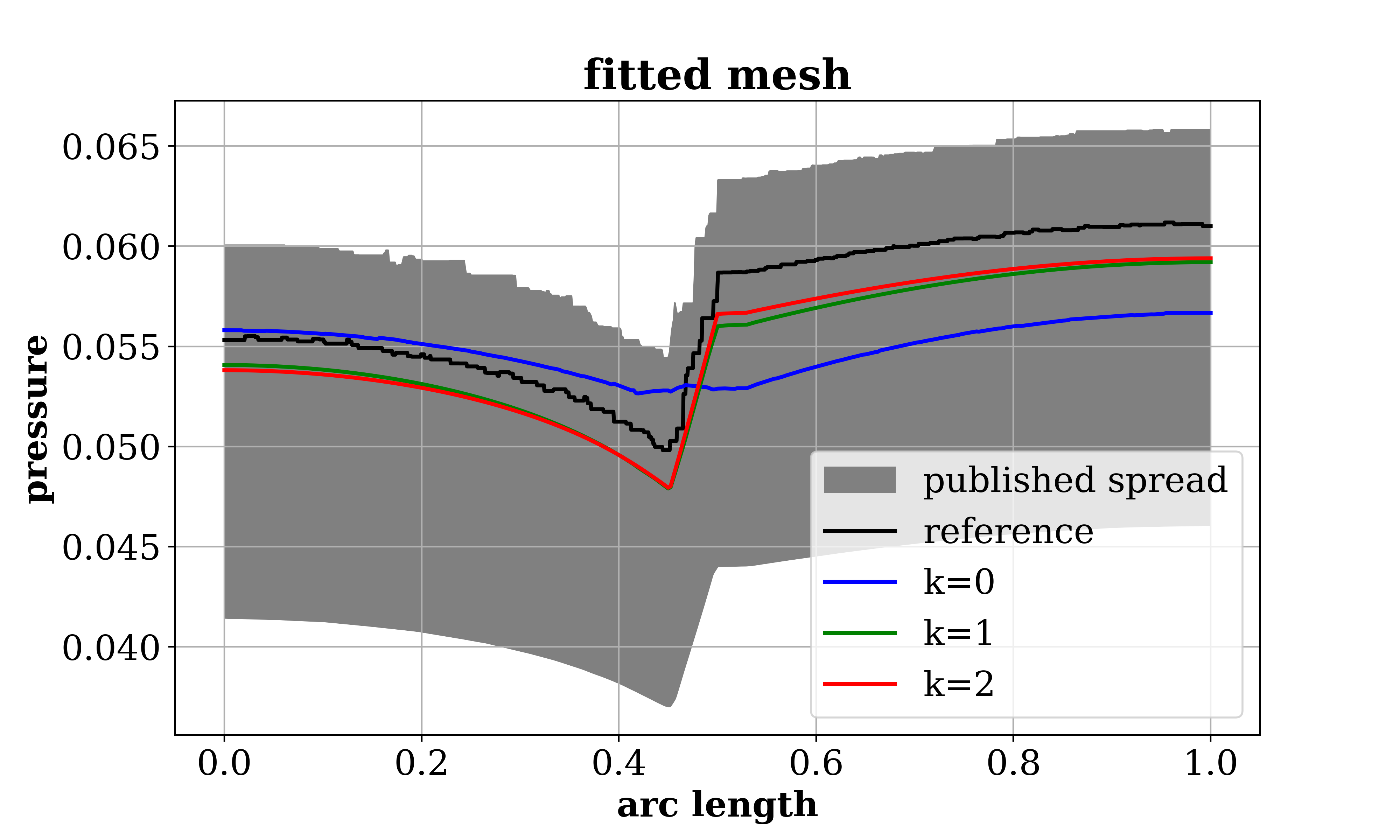}
\includegraphics[width=0.45\textwidth]{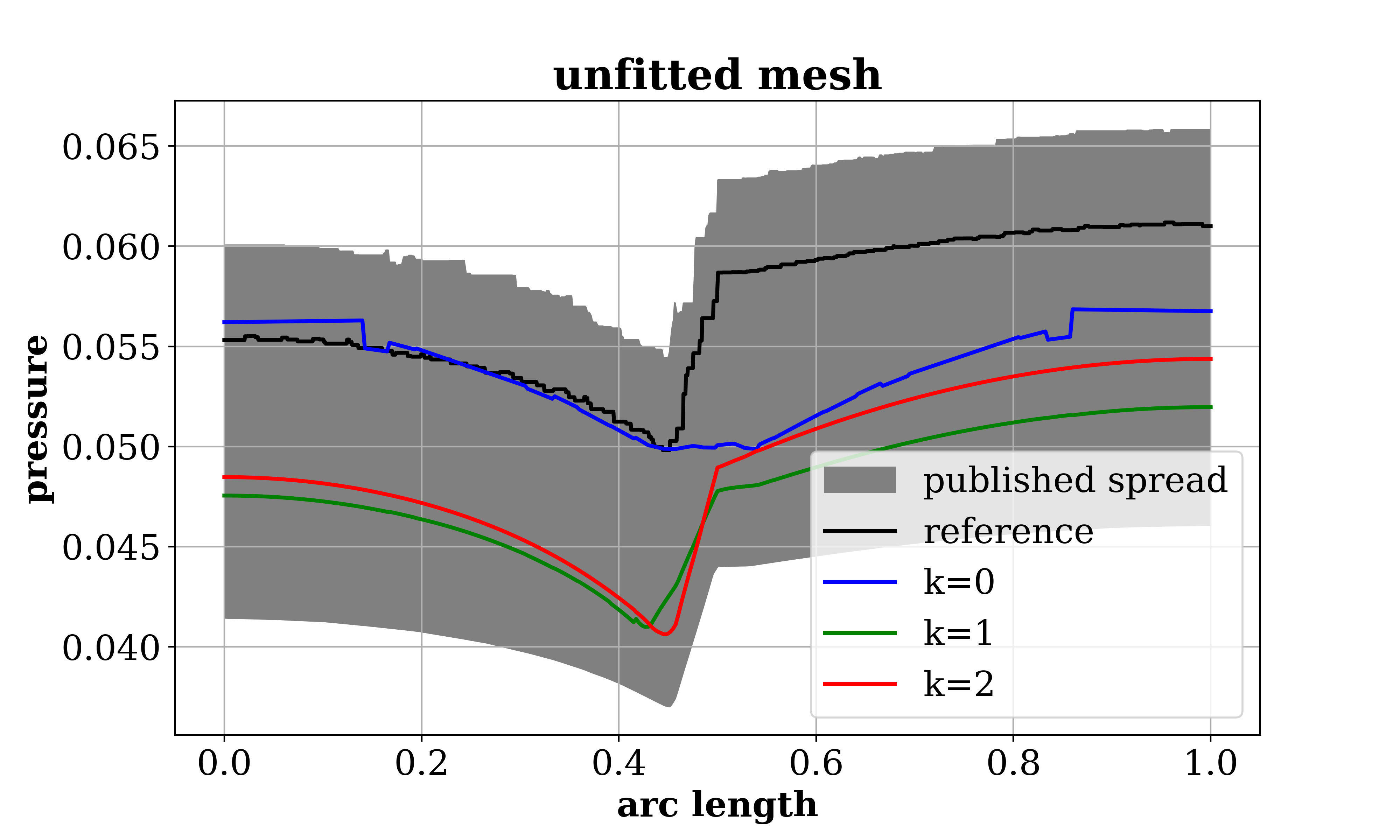}
\caption{\textbf{Example 5.} Pressure along line $(0.5,1.1,0)--(0.5,1.1,1.0)$. Here reference data is the result from the {\sf USTUTT-MPFA} scheme in \cite{Berre_2021} on a mesh with roughly 1 million matrix elements.
The shaded region depicts the area between the 10th and the 90th percentile of the published results in \cite{Berre_2021} on similar meshes with about $150k$ cells.}
\label{fig:ex5-cut}
\end{figure}

\subsection*{Example 6: Field Case in 3D}
In our final numerical example, we consider a similar setting as the last benchmark case proposed in \cite{Berre_2021}.
The geometry is based on  a postprocessed outcrop from the island of Algerøyna, outside Bergen, Norway, which contains 52 fracture. The simulation domain is the box $\Omega = (-500, 350)\times
(100, 1500)\times (-100, 500)$.
The fracture geometry is depicted in Figure \ref{fig:field}.
Homogeneous Dirichlet boundary condition is imposed on
the outlet boundary
\[
  \partial \Omega_{out}:=
  \underbrace{
  \{-500\}\times(100, 400)\times (-100, 100)}_{\partial\Omega_{out,0}}
\;\cup\;
\underbrace{
  \{350\}\times(100, 400)\times (-100, 100)}_{\partial\Omega_{out,1}}
\]
uniform unit inflow $\bld u\cdot \bld n = 1$
is imposed on the inlet boundary
\[
  \partial \Omega_{in}:=
  \underbrace{\{-500\}\times(1200, 1500)\times (300, 500)}_{\partial\Omega_{in,0}}
\;\cup\;
\underbrace{
(-500, -200)\times \{1500\}\times (300, 500)}_{\partial\Omega_{in, 1}}.
\]
  \begin{figure}[ht]
  \centering
    \includegraphics[width=.8\textwidth]{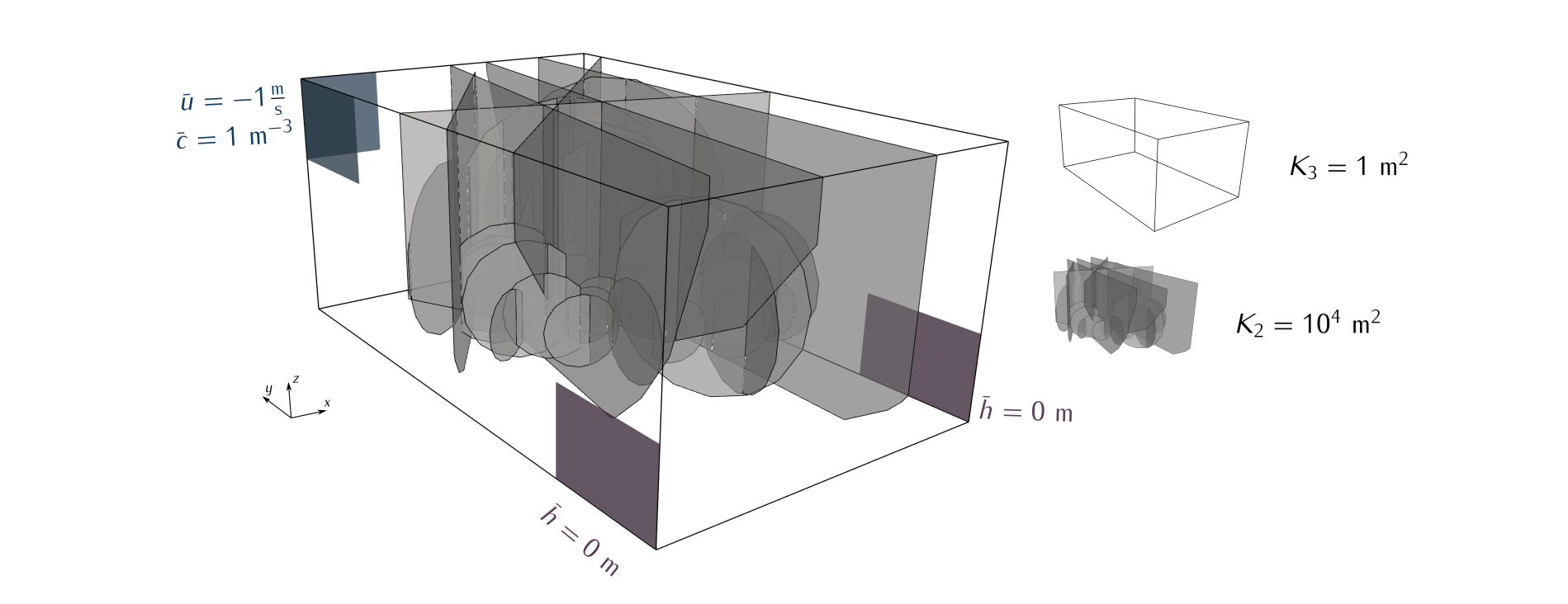}
    \caption{\textbf{Example 6.}  Conceptual model and geometrical description
    of the domain.}
    \label{fig:field}
  \end{figure}
Matrix permeability is $\mathbb K_m = 1$, and 
fracture thickness is $\epsilon = 10^{-2}$.
Similar to Example 3 in 2D, we consider three subcases: (a) all conductive fractures with 
permeability $k_c=10^{4}$, (b) all blocking fractures with permeability $k_b=10^{-4}m^2$, and (c) 2 blocking fractures with $k_b=10^{-4}$ and 50 conductive fractures with
$k_c=10^{4}$. 
Location of the blocking/conductive fractures for case (c) are marked in red/blue in the right panel of Figure~\ref{fig:ex6-msh}.

We perform the method \eqref{hdg} on 
two unfitted meshes; see Figure~\ref{fig:ex6-msh} for the fine mesh. The coarse mesh contains $~27k$ tetrahedral cells which is obtained by performing two steps of local mesh refinement around the fractured cells of a uniform background mesh with $h\approx 200$.
And the fine mesh contains $~210k$ cells with $~57.6k$ fractured cells which spits
to $47.8$ conductive fractured cells and $9.8k$ blocking fractured cells for case (c), which is a further uniform refinement of that coarse mesh.  As in the previous examples, we take polynomial degree 
$k=0,1,2$.  
For the penalty parameters, we take $C_b=1$, $s_b=2$, 
$C_c=1$ and the scaling power $s_c=3$ for $k=1$ and $k=2$, and $s_c=1$ for $k=0$. 
Moreover, for the case $k=0$ the default choice of stabilization \eqref{stab} is too strong which leads to almost constant (zero) pressure approximations for case (a). Here we further reduce the stabilization for $k=0$
by a factor of $L=1400$.
\begin{figure}[ht]
\centering
\vspace{1pt}
\includegraphics[width=0.80\textwidth]{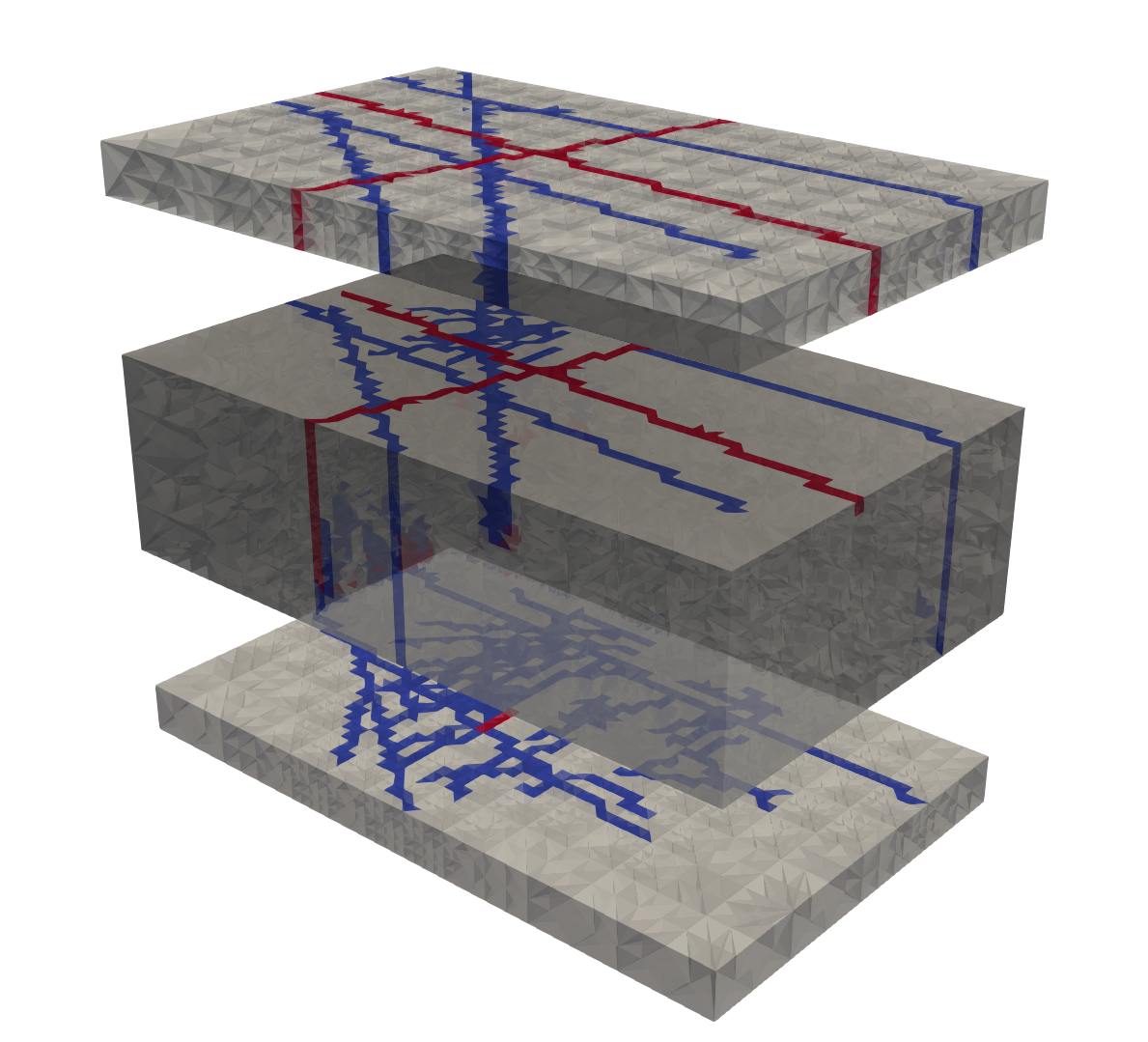}
\vspace{1pt}
\caption{\textbf{Example 6.}
Blocking (red) and conductive (blue) fractured cells on the fine mesh with 216816 cells for case (c).
Here the mesh is translated in the $z$-direction for $z<100$ and $z>400$ for better data visualization.}
\label{fig:ex6-msh}
\end{figure}

The pressure along the two diagonal lines
  $(-500, 100,-100)$--$(350, 1500, 500)$
  and
  $(350, 100,-100)$--$(-500, 1500, 500)$ are shown in Figures \ref{fig:ex6-cut1}-\ref{fig:ex6-cut3} for all three cases,
  where for case (a) the shaded region depicts the area between the 10th and the 90th percentile of the published results in \cite{Berre_2021} on fitted meshes with about $260k$ cells.
  It is observed that all methods produce qualitatively similar results for each case, even on the coarse mesh. 
  And our results for case (a) is consistent with the published results in \cite{Berre_2021}.
\begin{figure}[ht]
\centering
\includegraphics[width=0.45\textwidth]{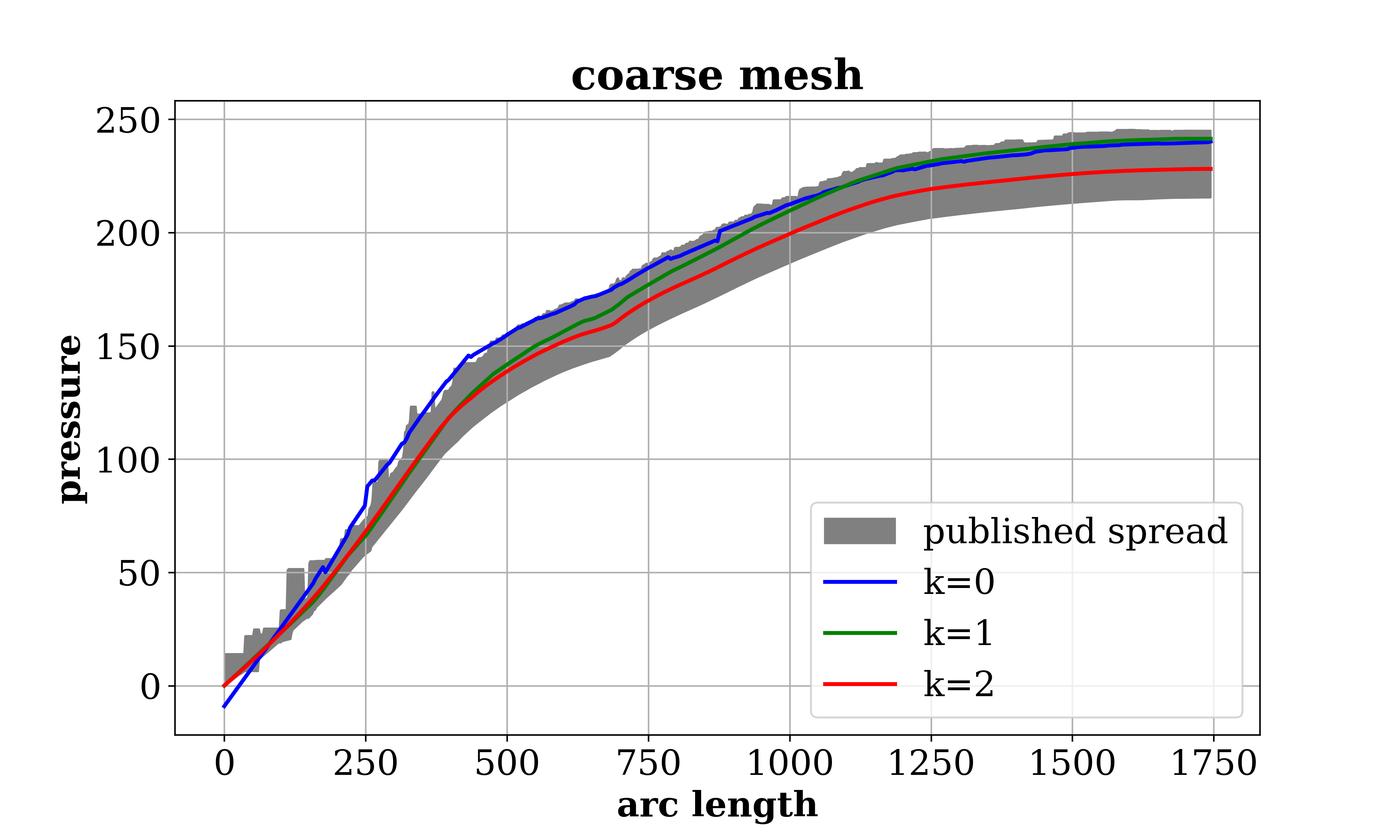}
\includegraphics[width=0.45\textwidth]{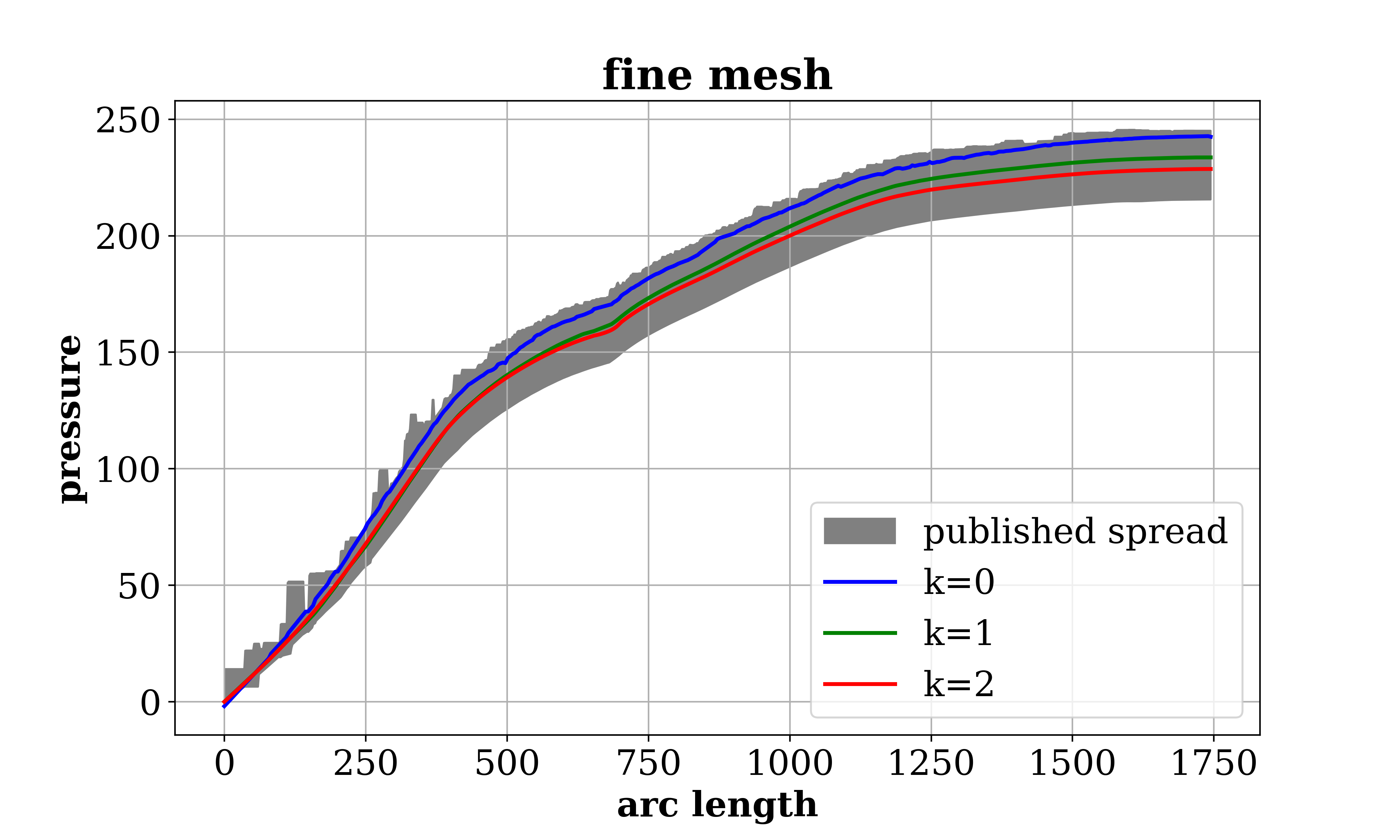}
\includegraphics[width=0.45\textwidth]{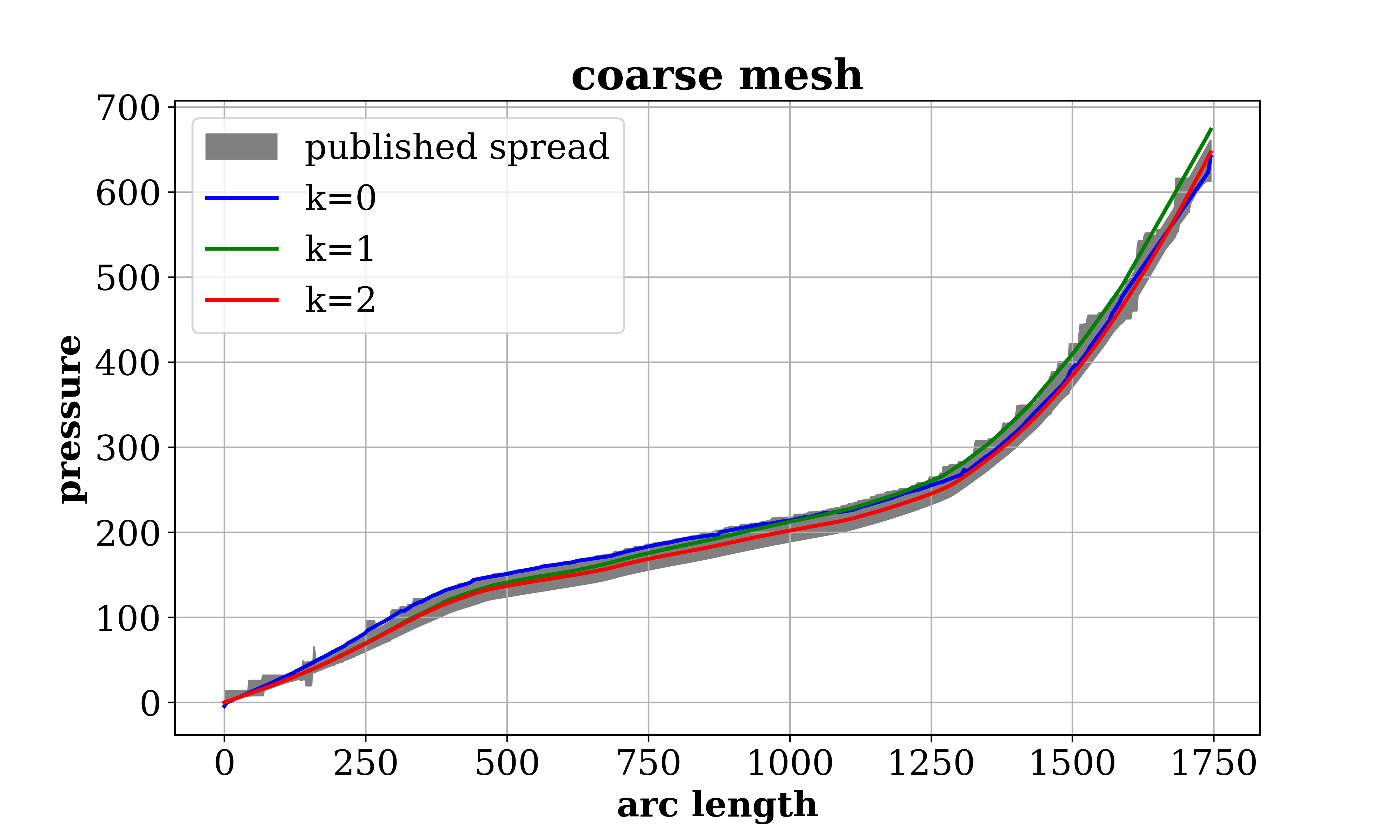}
\includegraphics[width=0.45\textwidth]{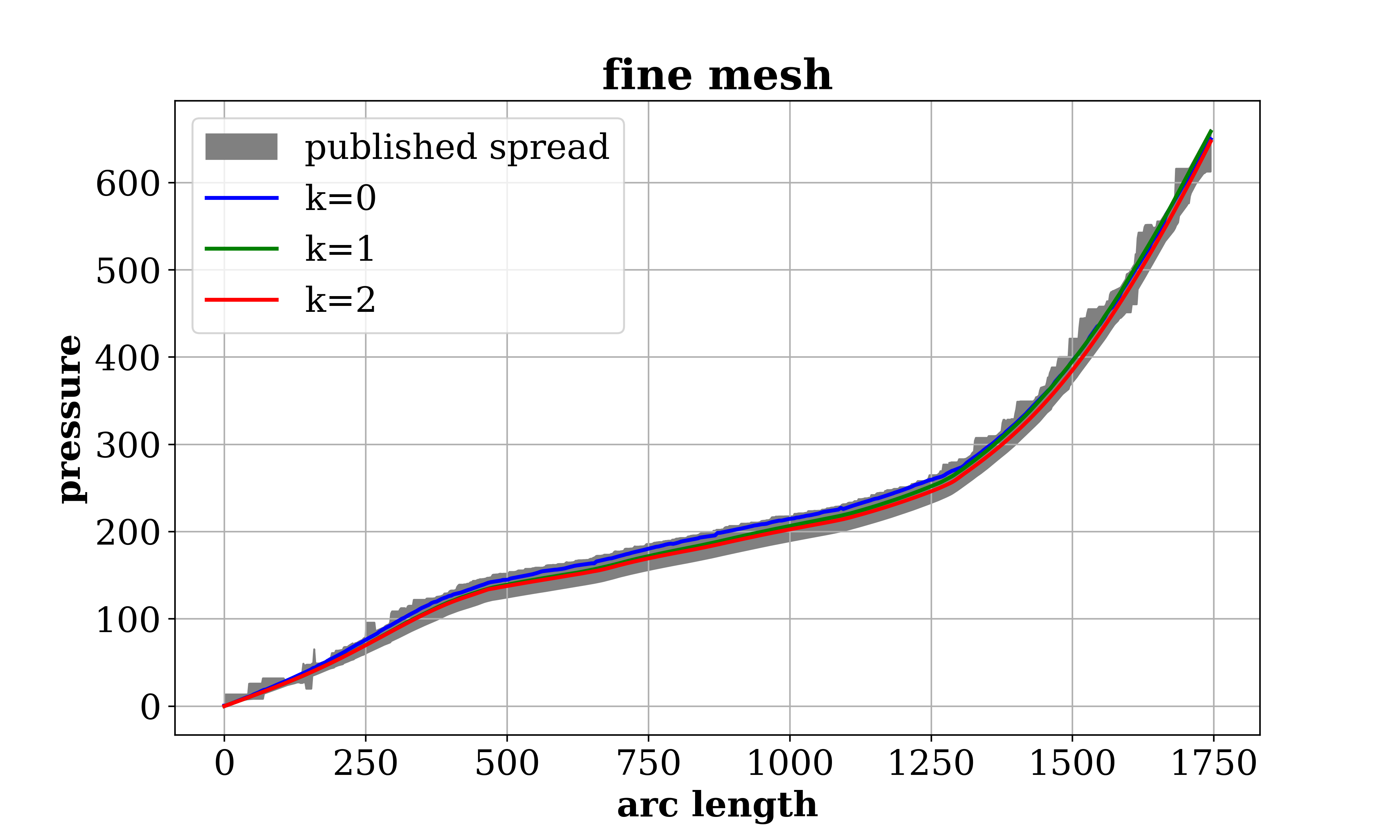}
\caption{\textbf{Example 6: Case (a).} Pressure along line $(-500, 100,-100)$--$(350, 1500, 500)$ (top) and line 
  $(350, 100,-100)$--$(-500, 1500, 500)$ (bottom). The shaded region depicts the area between the 10th and the 90th percentile of the published results in \cite{Berre_2021} on fitted meshes with about $260k$ cells.}
\label{fig:ex6-cut1}
\end{figure}
\begin{figure}[ht]
\centering
\includegraphics[width=0.45\textwidth]{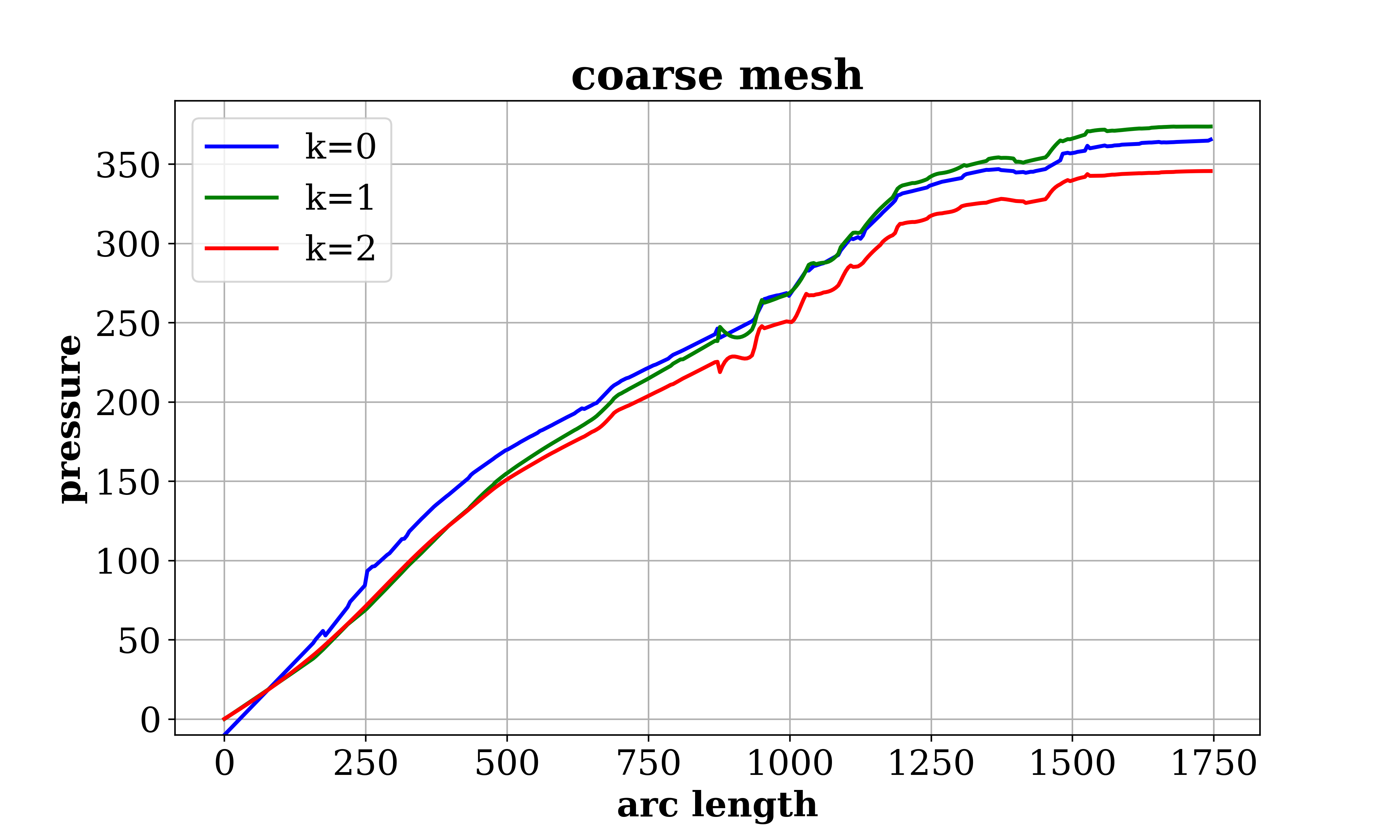}
\includegraphics[width=0.45\textwidth]{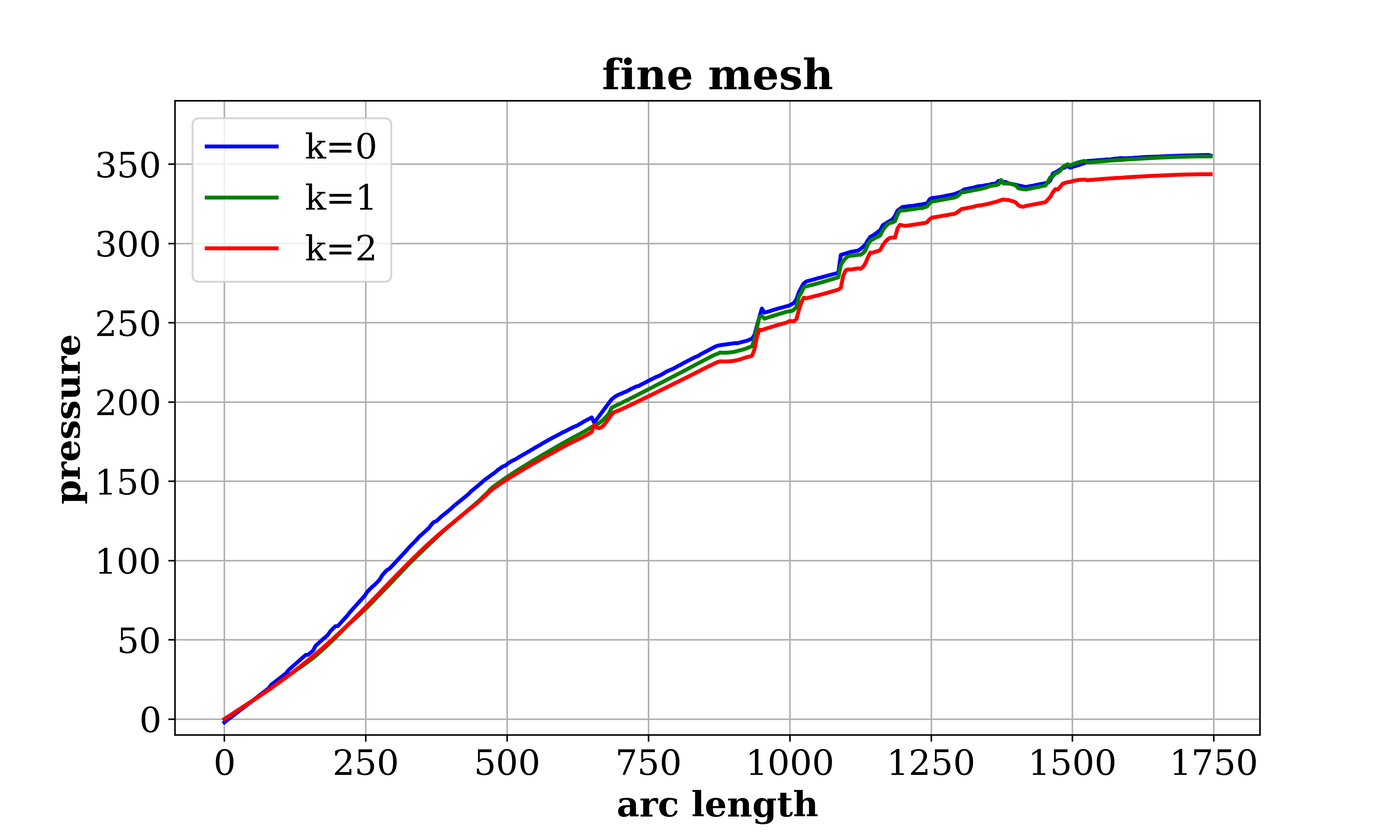}
\includegraphics[width=0.45\textwidth]{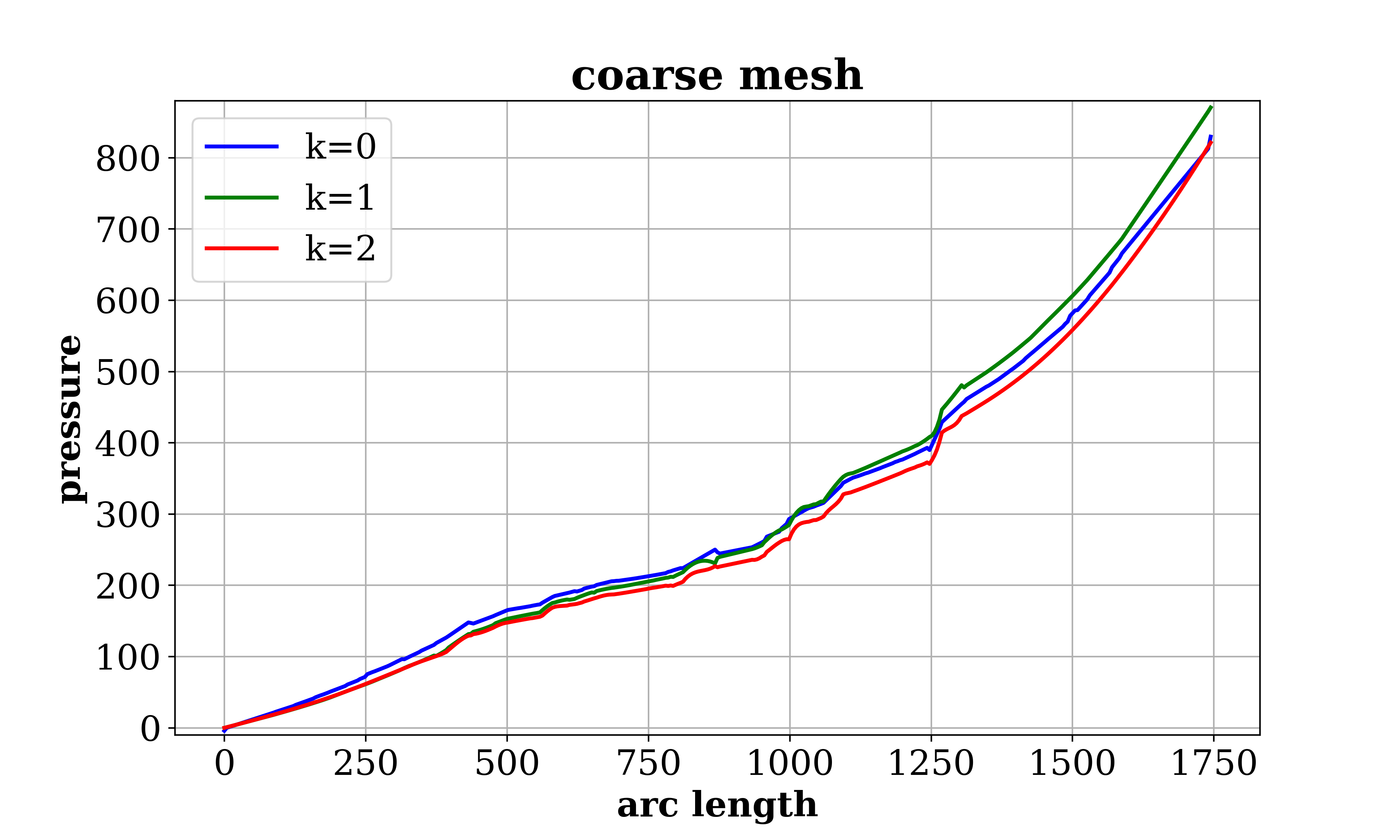}
\includegraphics[width=0.45\textwidth]{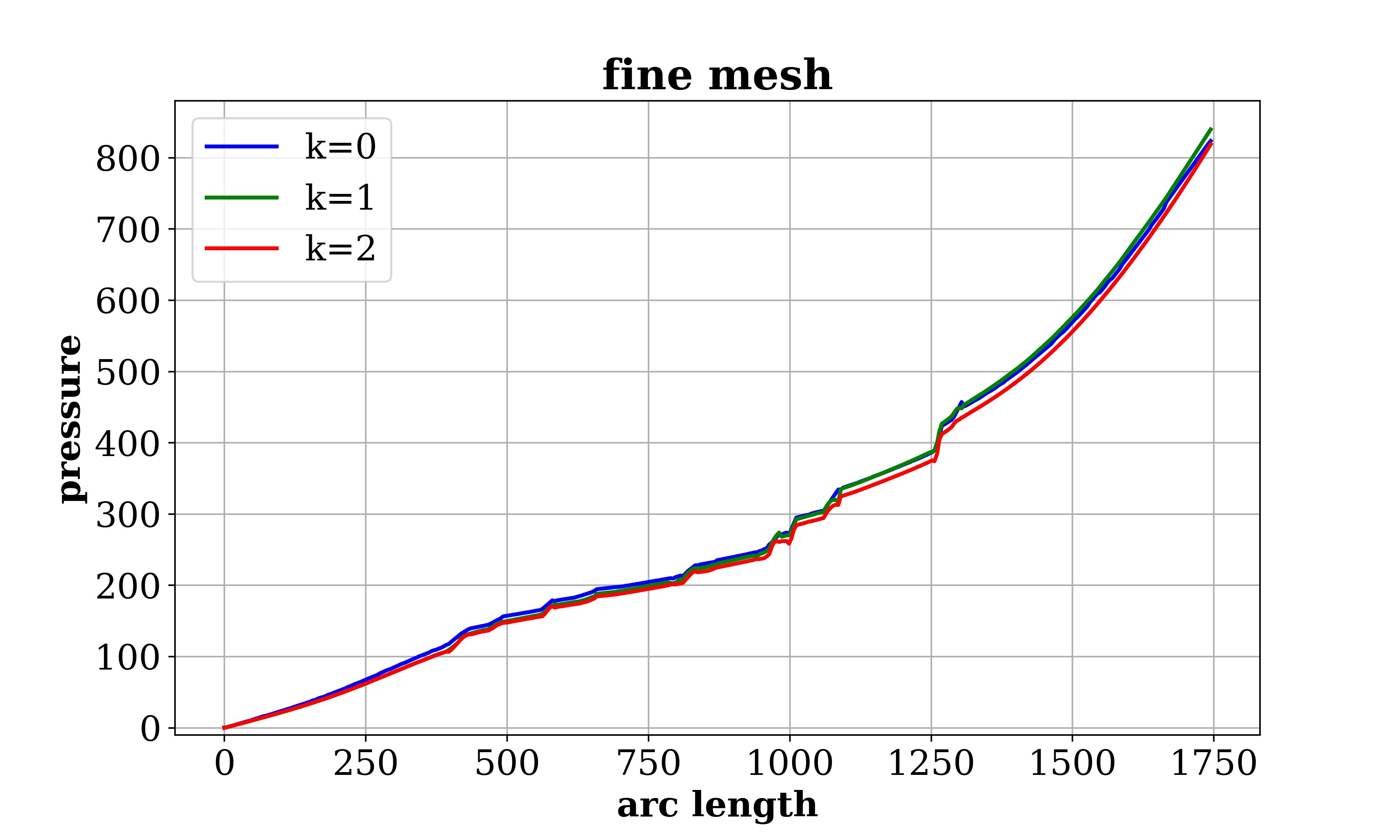}
\caption{\textbf{Example 6: Case (b).} Pressure along line $(-500, 100,-100)$--$(350, 1500, 500)$ (top) and line 
  $(350, 100,-100)$--$(-500, 1500, 500)$ (bottom).}
\label{fig:ex6-cut2}
\end{figure}

\begin{figure}[ht]
\centering
\includegraphics[width=0.45\textwidth]{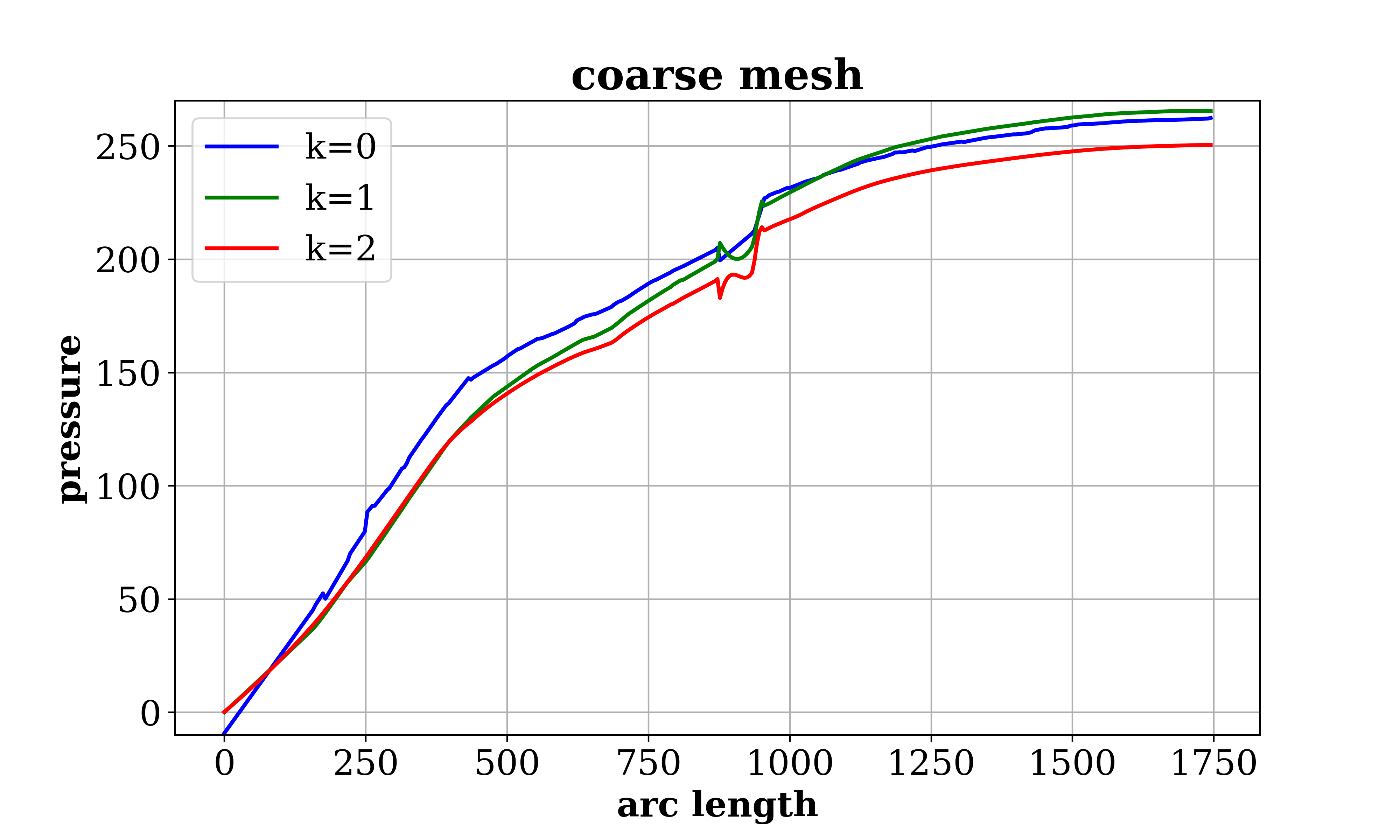}
\includegraphics[width=0.45\textwidth]{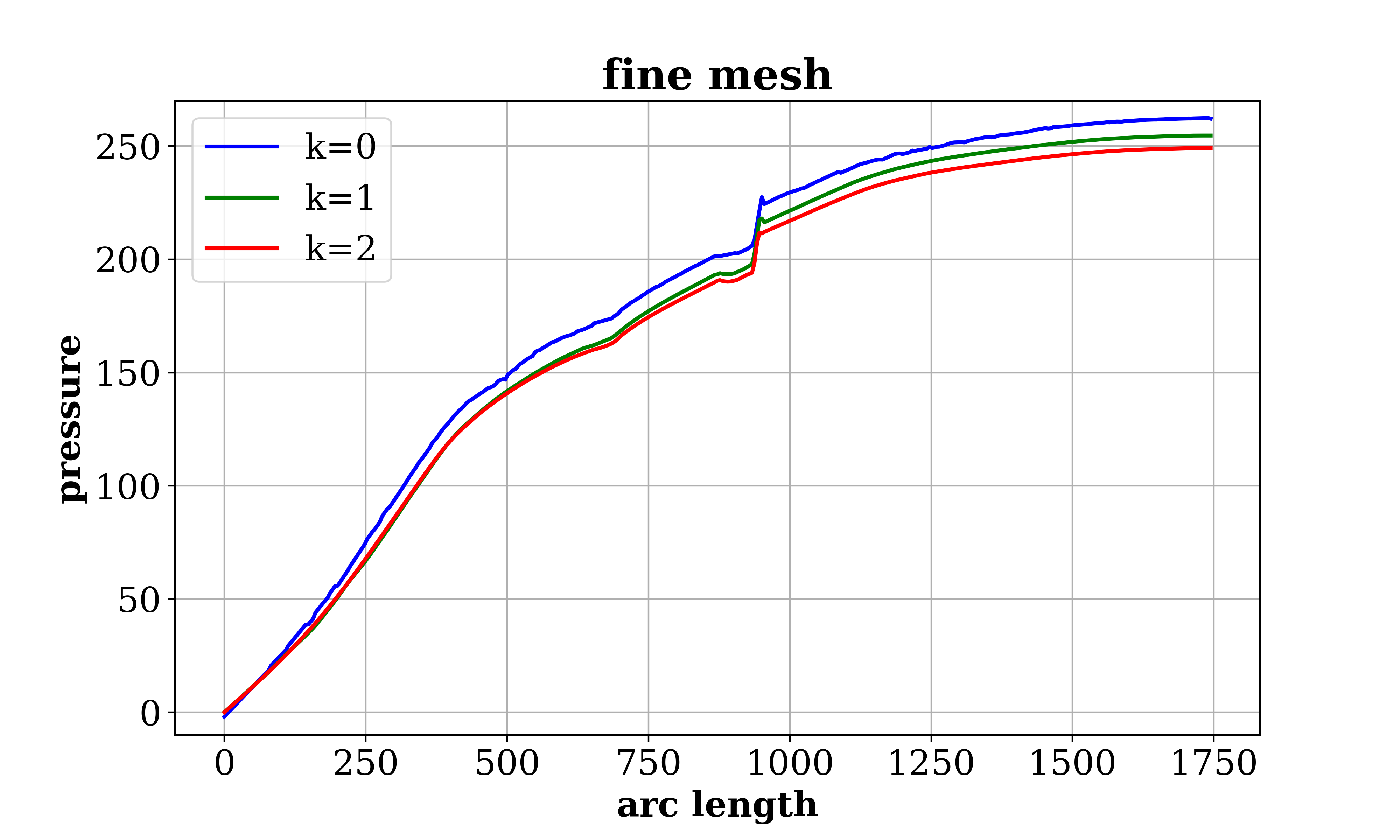}
\includegraphics[width=0.45\textwidth]{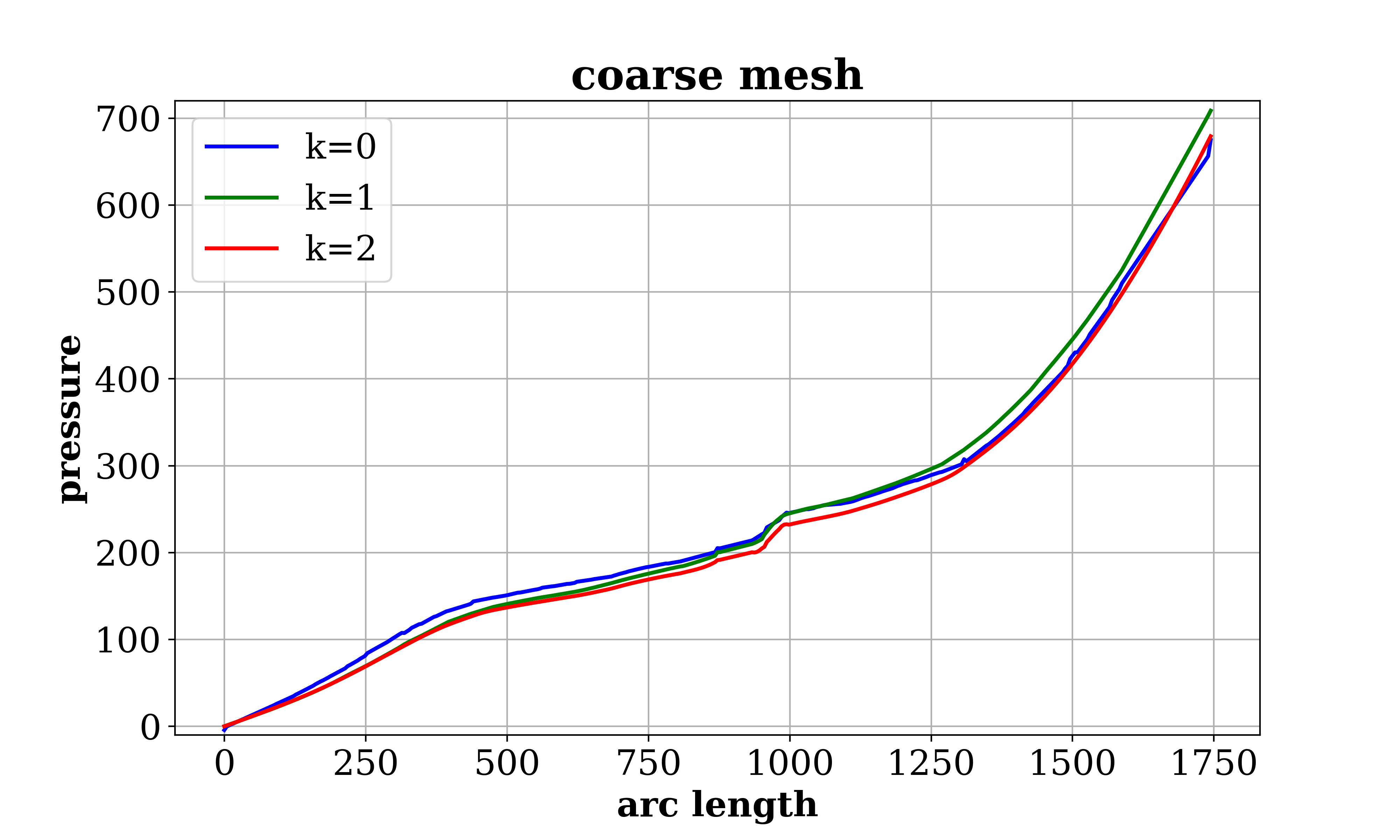}
\includegraphics[width=0.45\textwidth]{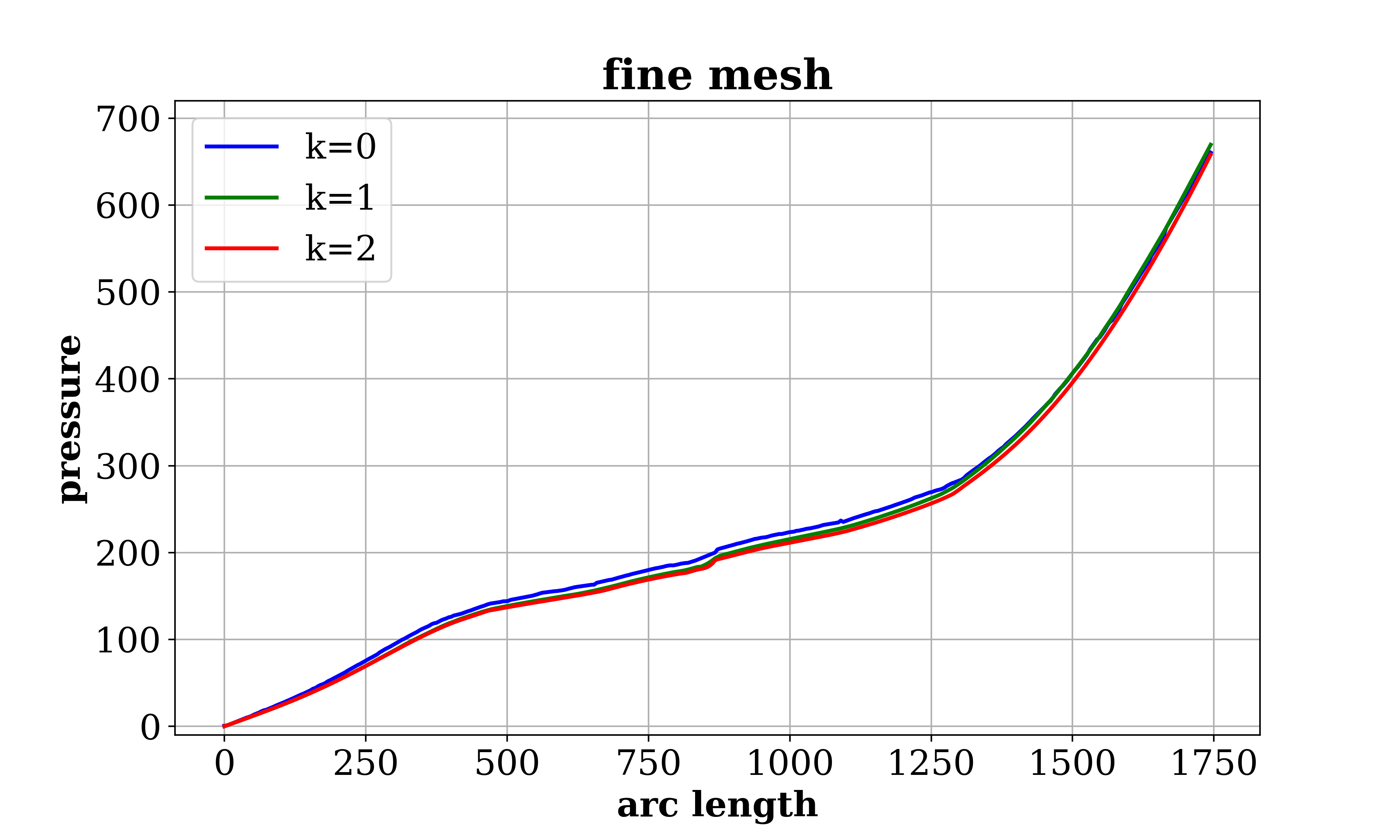}
\caption{\textbf{Example 6: Case (c).} Pressure along line $(-500, 100,-100)$--$(350, 1500, 500)$ (top) and line 
  $(350, 100,-100)$--$(-500, 1500, 500)$ (bottom).}
\label{fig:ex6-cut3}
\end{figure}

We now plot the pressure profile on the fine mesh for $k=2$ for the three cases in Figure~\ref{fig:ex6-cont}.
These contour plots are similar to the 2D results where the effects of conductive and blocking fractures are completely different as expected. 

\begin{figure}[ht]
\centering
\includegraphics[width=0.32\textwidth]{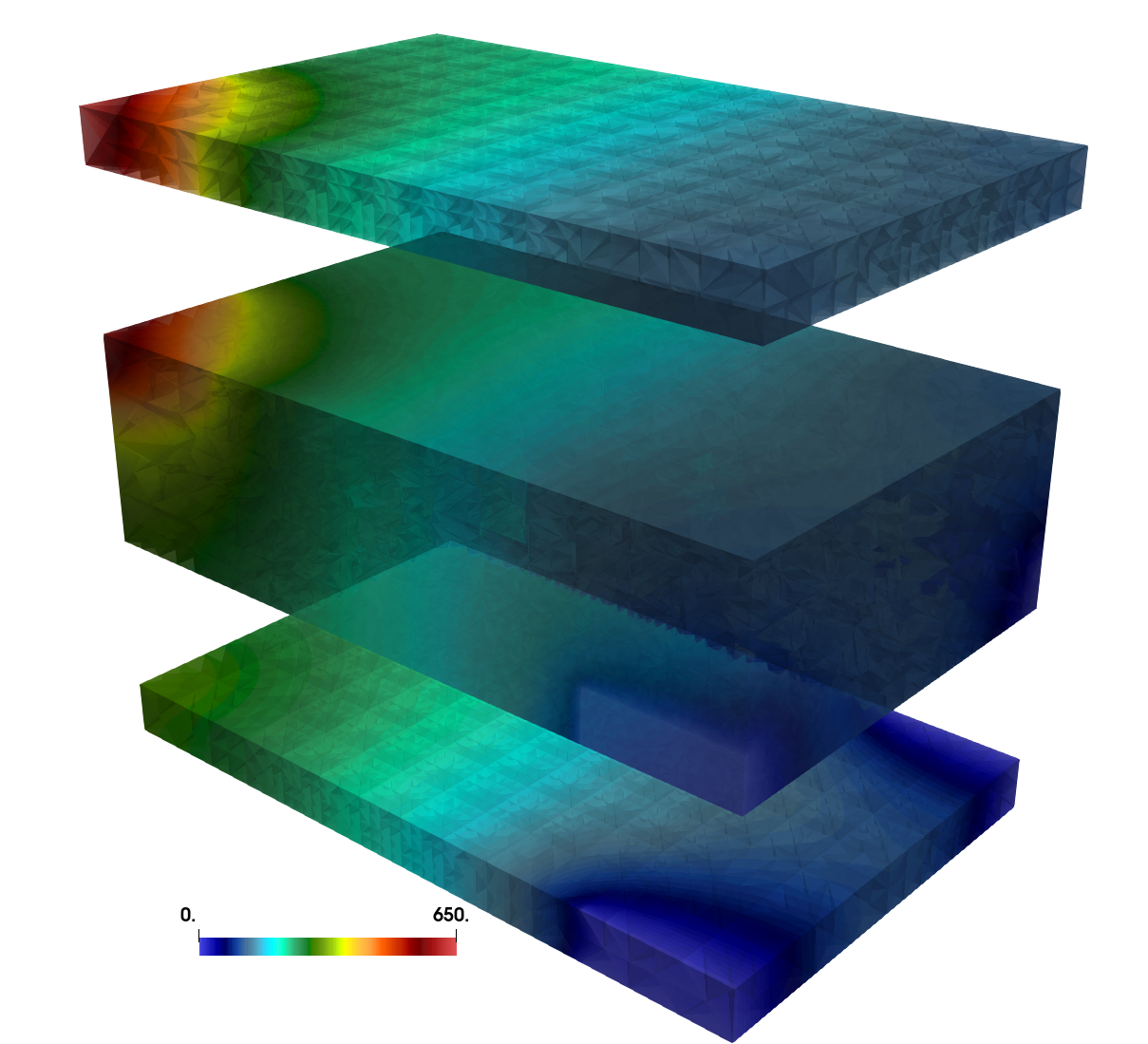}
\includegraphics[width=0.32\textwidth]{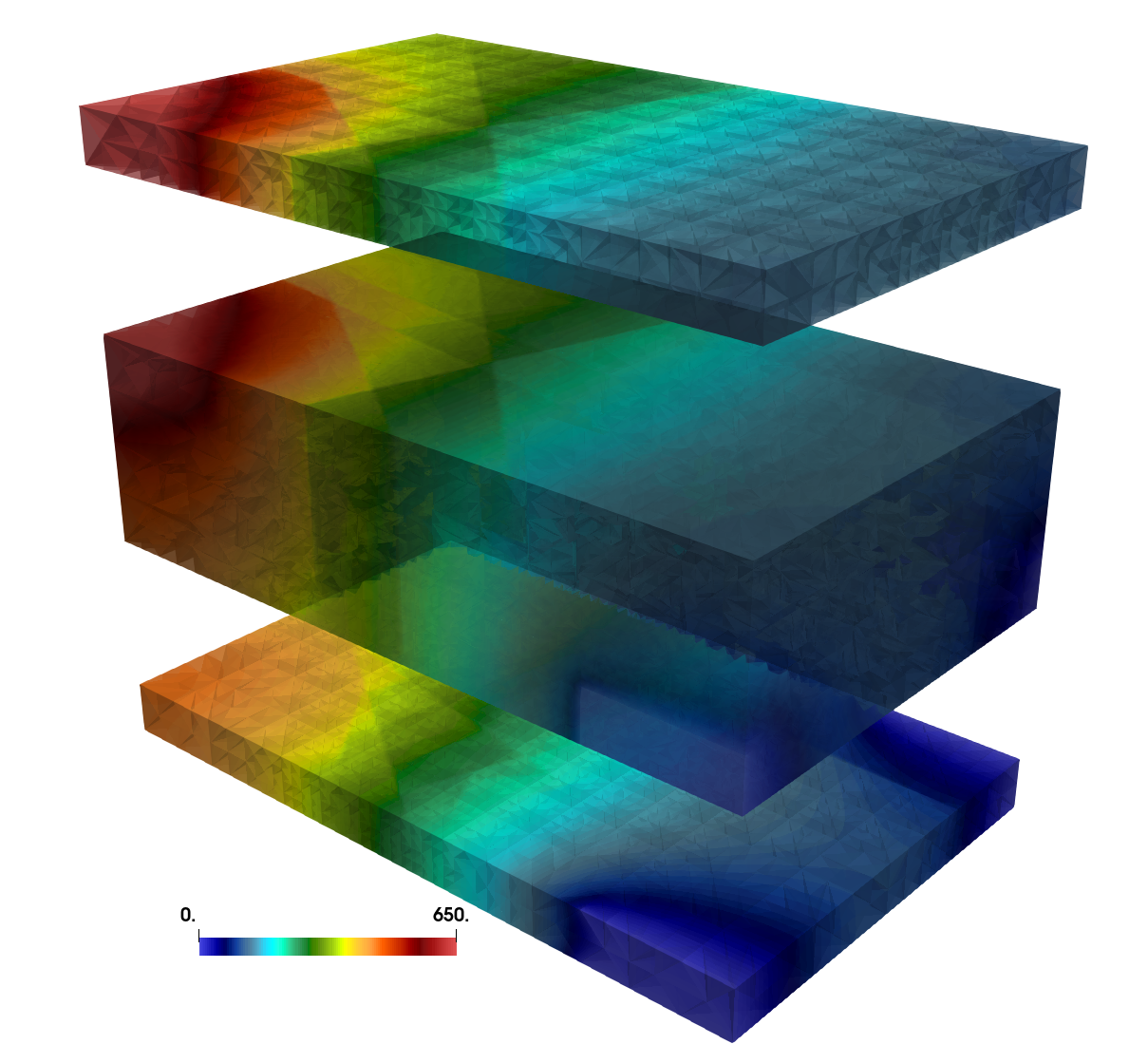}
\includegraphics[width=0.32\textwidth]{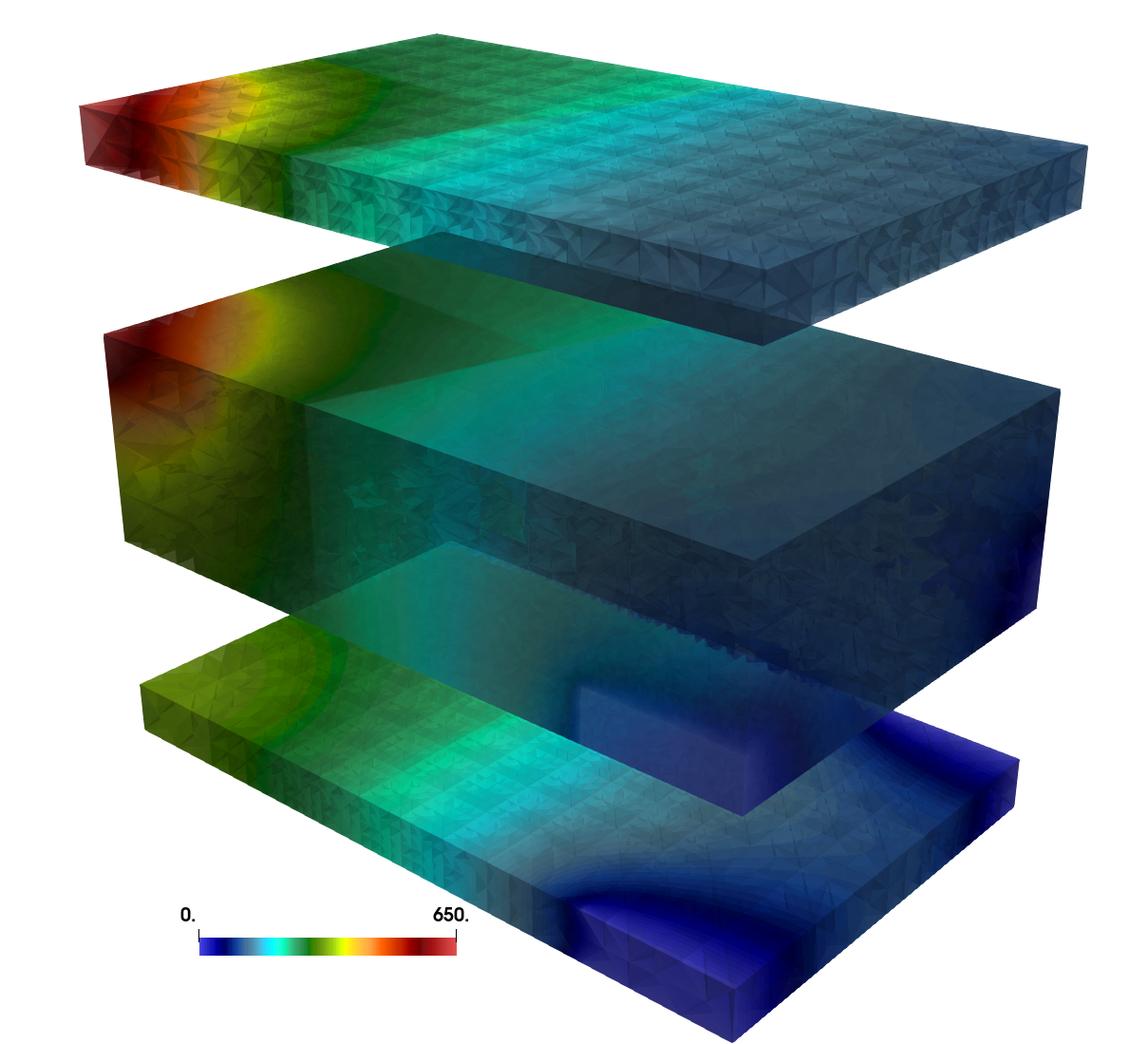}
\caption{\textbf{Example 6.} Pressure contour for 
$k=2$ on the fine mesh. 
Left: case (a). Middle: case (b). Right: case (c).
}
\label{fig:ex6-cont}
\end{figure}

Let us finally briefly comment on the computational cost on the fine mesh where the major bottleneck is the global linear system solve.
Here we use NGsolve's built-in parallel sparse Cholesky factorization to solve this global SPD linear system  on a 64-core server with two Two AMD EPYC 7532  processors which has 256G memory. 
For $k=0$, there are $439k$ global DOFs, and the linear system solver takes 30G memory and 4 seconds wall clock time; 
for $k=1$, there are $1.32$ million global DOFs, and the linear system solver takes 50G memory and 48 seconds wall clock time; 
for $k=2$, there are $2.64$ million global DOFs, and the linear system solver takes 100G memory and 285 seconds wall clock time.
More efficient solver like multigrid may significantly reduce the memory consumption and overall solver time. We will investigate this issue in our future work.

\section{Conclusion}
\label{sec:conclude}
We presented a novel HDG scheme on unfitted meshes for fractured porous media flow with both blocking and 
conductive fractures based on the RDFM using Dirac-$\delta$ functions approach to handle the fractures. Well-posedness of the method is established. Our scheme is relatively easy to implement comparing with most existing fractured porous media flow solvers in the literature that can simultanuously handel blocking and conductive fractures. 
In fact, we simply modify a regular porous media flow HDG solver by including two surface integrals related to the blocking and conductive fractures which are represented as (multi-)level set functions, and 
properly adjust the penalty parameters in the numerical flux on those fractured cells. 
No lower dimensional fracture modeling is needed in our approach. Besides the ease of using unfitted meshes in our scheme, we also maintain local conservation as typical of DG methodologies. Moreover, the resulting linear system can be solved efficiently via static condensation, which leads to a global coupled SPD linear system, and higher order pressure postprocessing is also available.  
The proposed HDG scheme is extensively tested against various benchmark examples in two- and three-dimensions. Satisfactory results are observed even when both blocking and conductive fractures co-exist in the computational domain.

Our future work includes the detailed study of the stabilization function on the performance of the scheme, and their variable-order and hybrid-mixed variants. We will also investigate robust preconditioning techniques for the global SPD linear system, and extend our solver to multiphase fractured porous media flows. 


\bibliographystyle{ieeetr}

\end{document}